\newtheorem{theorem}{Theorem}[section]
\newtheorem{corollary}[theorem]{Corollary}
\newtheorem{prop}[theorem]{Proposition}
\theoremstyle{definition}
\newtheorem{definition}[theorem]{Definition}
\newtheorem{example}[theorem]{Example}
\newtheorem{eser}[theorem]{Exercise}
\theoremstyle{remark}
\newtheorem{remark}[theorem]{Remark}
\numberwithin{equation}{section}
\newcommand{\Hom}{\operatorname{Hom}}
\newcommand{\Ker}{\operatorname{Ker}}
\newcommand{\Ext}{\operatorname{Ext}}
\newcommand{\Leq}{\leq_{\textrm{deg}}}
\newcommand{\ZZ}{\mathbb{Z}}
\newcommand{\PP}{\textbf{P}}
\newcommand{\CC}{\mathbb{C}}
\newcommand{\Gr}{\textrm{Gr}}
\begin{document}

\title{Three lectures on Quiver Grassmannians}

\author{Giovanni Cerulli Irelli}
\address{Via Antonio Scarpa 10, 00163, Roma (ITALY)}
\curraddr{}
\email{giovanni.cerulliirelli@uniroma1.it}
\thanks{}

\subjclass[2000]{Primary }

\date{12th August 2018}

\begin{abstract}
This paper contains the material discussed in the series of three lectures that I gave during the workshop of the ICRA 2018 in Prague. I will introduce the reader to some of the techniques used in the study of the geometry of quiver Grassmannians. The notes are quite elementary and thought for phd students or young researchers. I assume that the reader is familiar with the representation theory of quivers. 
\end{abstract}

\maketitle
\section*{Introduction}

Given a finite quiver $Q$ and a finite dimensional $Q$--representation $M$, the quiver Grassmannian $\Gr_\mathbf{e}(M)$ is the projective variety of $Q$--subrepresentations $N\subseteq M$ of dimension vector $\mathbf{dim}\,N=\mathbf{e}$. Quiver Grassmannians were considered in the seminal paper of Schofield \cite{S} for the study of general representations of $Q$. It is shown there that a general representation of dimension vector $\mathbf{d}$ admits a subrepresentation of dimension vector $\mathbf{e}$ if and only if the minimal value of the dimension of the extension space between a representation of dimension vector $\mathbf{e}$ and one of dimension vector $\mathbf{d-e}$ is zero. This is shown by considering a universal family 
$$
\pi_\mathbf{e,d}:\mathcal{Y}_\mathbf{e,d}\rightarrow R_\mathbf{d}(Q)
$$
over the representation space $R_\mathbf{d}(Q)$ of $Q$--representations of dimension vector $\mathbf{d}$ whose fiber over a point $M\in R_\mathbf{d}(Q)$ is $\Gr_\mathbf{e}(M)$. This is a proper family whose total space is smooth and irreducible. It is nowadays called the universal quiver Grassmannian. 

Quiver Grassmannians then appeared in the Fomin and Zelevinsky theory of cluster algebras (\cite{FZI,FZII,FZIV}), by work of Caldero-Chapoton \cite{CC}, Caldero-Keller \cite{CK1,CK2} and Derksen-Weyman-Zelevinsky \cite{DWZ2}. In those papers it is shown that for any (non-initial) generator $u$ of the cluster algebra $\mathcal{A}_Q$ associated with $Q$ there exists a $Q$--representation $M$ such that $u$ has the following form
\begin{equation}\label{Eq:CC-Formula}
u=\mathbf{x}^{\mathbf{g}_M}\left(\sum_\mathbf{e}\chi(\Gr_\mathbf{e}(M))\mathbf{x}^{B\mathbf{e}}\right)
\end{equation}
where $\mathbf{g}_M$ is the index of $M$ and $\chi$ denotes the Euler characteristic (see section~\ref{Sec:CCMap}). This is a remarkable fact, because the generators of the cluster algebra $\mathcal{A}_Q$ are defined recursively, starting from the initial seed $(B_Q,\mathbf{x})$. Thus, formula \eqref{Eq:CC-Formula} is a solution of this complicated recurrence relation and it is given in terms of quiver Grassmannians. It is then a natural question to see if a better understanding of the geometry of the projective variety $\Gr_\mathbf{e}(M)$ can provide useful information about the cluster algebra $\mathcal{A}_Q$. This turned out to be true in the affine type $A$: in \cite{CEsp} and \cite{CeDEsp} it is shown that by considering only the smooth part of $\Gr_\mathbf{e}(M)$ in the formula \eqref{Eq:CC-Formula}  one gets the elements of the atomic basis of $\mathcal{A}_Q$. The atomic basis is a $\ZZ$--basis such that its positive span coincides with the set of elements of $\mathcal{A}_Q$ which have positive coefficients with respect to any cluster. Nowadays it is known that an atomic basis exists only in very particular cases. 

When the quiver is acyclic, in formula \eqref{Eq:CC-Formula} the $Q$--representation $M$ is rigid, i.e. $\Ext^1(M,M)=0$. 
The positivity conjecture of Fomin and Zelevinky hence implies that the Euler characteristic of the quiver Grassmannians associated with a rigid $Q$-representation is non-negative. This was proved by Nakajima \cite{Naka}. Caldero-Keller and others conjectured that much more is true, namely that those quiver Grassmannians admit a cellular decomposition. This conjecture is still open. In \cite{CEFR} it is proved a little less: namely that those quiver Grassmannians have property (S), i.e. no odd homology, no torsion in even homology, and the cycle map is an isomorphism. This refines the proof of Nakajima. For Dynkin and affine quivers much more is true: for Dynkin quivers every quiver Grassmannian admits a cellular decomposition and for affine quivers, every quiver Grassmannian associated with a representation $M$ whose regular part is rigid, admits a cellular decomposition. See section~\ref{Sec:CellDec}. 

Apart from this motivation, the geometry of quiver Grassmannians is an interesting object of study, due to the fact that many geometric properties can be studied via the representation theory of quivers. But one has to be careful here: Reineke showed that every projective variety can be realized as a quiver Grassmannian in an elementary way  and Ringel straightened considerably this result by showing that every projective variety arises as a quiver Grassmannian of every wild quiver. See section~\ref{Sec:EveryProj} for this, and for some examples.

It is then natural to restrict attention to particular quivers and dimension vectors.  The most fruitful restriction is when $Q$ is an equioriented quiver of type $A_n$, $\mathbf{d}=(n+1,\cdots, n+1)$ and $\mathbf{e}=(1,2,\cdots, n)$. In this case the generic fiber of the universal quiver Grassmannian is the complete flag variety for $SL_{n+1}$ and the other fibers can be hence considered as ``linear'' degenerations of the complete flag variety. Among all fibers one is of particular interest: the Feigin degenerate flag variety. In \cite{CFR, CFR2, CFR3, CFR4}, we have studied degenerate flag varieties (and more general quiver Grassmannians of Dynkin type) from this point of view and get interesting new results and new proofs of known results. In \cite{CFFFR} we have explored the universal quiver Grassmannian for the special case mentioned above, and find a very interesting variety which is a flat degeneration of the complete flag variety and having the n-th Catalan number of irreducible components. See Section~\ref{Sec:TypeA} for details concerning quiver Grassmannians of type $A$, linear degenerations of flag varieties and quiver Grassmannians of Dynkin type.

In the last section~\ref{Sec:Exercises} a collection of exercises is provided. The exercises are divided according to the different sections of the paper and they are thought to provide a better understanding of the techniques mentioned in the main body of the paper.  I encourage the reader to  solve the exercises corresponding to a given section during the study of the section. 

I would like to thank the organizers of the ICRA 2018 for inviting me to give a series of lectures on this topic. I also want to thank the Ph.D. students who asked several questions during the lectures; I hope that this paper can serve to them as a handy guide into this subject. I am indebted to all my coauthors, in particular  Markus Reineke and Evgeny Feigin, on whose work most of this paper is based on.  Finally, I sincerely thank the anonymous referee and Alex Puntz for a careful reading of a previous version of this paper and for many helpful suggestions. 

The final version of this manuscript will appear in the proceedings of ICRA 2018, that will be published by AMS in Contemporary  Mathematics.

\section{Notations}\label{Sec:1}
Let $Q$ be a finite acyclic and connected quiver.  We denote by $Q_0$ the  finite set of vertices (whose cardinality is always denoted with the letter $n$), by $Q_1$ the finite set of edges, and the two functions  $s,t: Q_1\rightarrow Q_0$ provide an orientation of the edges. For an oriented edge  $\alpha$ we write $\alpha:s(\alpha)\rightarrow t(\alpha)$. 
The base field is the field of complex numbers, denoted either by $K$ or with the usual symbol $\CC$. We denote by $\textrm{Rep}(Q)$ the category of finite-dimensional complex representations of $Q$. Recall that the objects of $\textrm{Rep}(Q)$ are  tuples $M=((M_i)_{i\in Q_0}, (M_\alpha)_{\alpha\in Q_1})$ where $M_i$ is a (finite-dimensional) vector space and $M_\alpha:M_{s(\alpha)}\rightarrow M_{t(\alpha)}$  is a linear map.  
A $Q$--morphism $\psi:M\rightarrow N$ between two $Q$--representations is a collections $(\psi_i:M_i\rightarrow N_i)_{i\in Q_0}$ of linear maps such that the following square 
$$
\xymatrix{
M_{s(\alpha)}\ar^{M_{\alpha}}[r]\ar_{\psi_{s(\alpha)}}[d]&M_{t(\alpha)}\ar^{\psi_{t(\alpha)}}[d]\\
N_{s(\alpha)}\ar^{N_{\alpha}}[r]&N_{t(\alpha)}
}
$$
commutes for every arrow $\alpha$ of $Q$. We denote by $\Hom_Q(M,N)$ the vector space of $Q$--morphisms between the two $Q$--representations $M$ and $N$. We denote its dimension by 
$$
[M,N]:=\textrm{dim}\,\Hom_Q(M,N). 
$$
To a quiver $Q$ is associated its (complex) path-algebra $A=KQ$, which is the algebra formed by concatenation of arrows. The category $\textrm{Rep}_K(Q)$ is equivalent to the category $A$--mod of $KQ$--modules. Notice that $KQ$ is finite--dimensional since the quiver $Q$ is acyclic, i.e. it does not have  oriented cycles (even if its underlying graph can have a cycle).  The category $\textrm{Rep}_K(Q)$ is abelian and  Krull-Schmidt, moreover it is hereditary, i.e. $\Ext^{\geq 2}_Q(-,-)=0$. We use the standard notation 
$$
[M,N]^1:=\textrm{dim}\,\Ext^1_Q(M,N). 
$$

The set $\{e_i\}_{i\in Q_0}$ of paths of length zero form a complete set of pairwise orthogonal idempotents of $A$. Since $Q$ is acyclic, and hence the path algebra $A=KQ$ is finite-dimensional, there are only finitely many simple $A$--modules parametrized by the vertices of $Q$. We denote by $S_k$ the simple corresponding to vertex $k$, by $P_k$ its projective cover and by $I_k$ it injective hull. Recall that as  $Q$--representation, $P_k$ is described as follows: the vector space at vertex $i$ has a basis given by paths from vertex $k$ to vertex $i$, and the arrows act by ``concatenation''. Notice that if $Q$ is an orientation of a tree (for example if $Q$ is Dynkin), then every projective $P_k$ is thin, which means that the vector space $(P_k)_i$ at every vertex $i$ is at most one--dimensional. 
Dually, the injective indecomposable (left) $A$--modules are the indecomposable direct summands of $DA$ (viewed as left $A$--module), where $D$ is the standard $K$--duality.  As $Q$--representation, $I_k$ has at vertex $j$ a vector space with basis consisting of all the paths of $Q$ starting in $j$ and ending in $k$, and the arrows act by ``concatenation''.  

For a $Q$--representation $M$, the collection  $(\dim M_i)_{i\in Q_0}\in\ZZ^{Q_0}_{\geq0}$ of non--negative integers is called the \emph{dimension vector} of $M$, and it is denoted in bold by $\mathbf{dim}\, M$. Once the dimension vector is fixed, a $Q$--representation is determined by linear maps: this leads us to the variety of $Q$--representations. Let $\mathbf{d}=(d_i)_{i\in Q_0}\in\ZZ^{Q_0}_{\geq0}$ be a dimension vector. The vector space 
$$
R_\mathbf{d}:=\bigoplus_{\alpha\in Q_1} \Hom_K(K^{d_{s(\alpha)}},K^{d_{t(\alpha)}})
$$
is called the variety of $Q$--representations of dimension vector $\mathbf{d}$. The group
$$
G_\mathbf{d}:=\prod_{i\in Q_0}\textrm{GL}_{d_i}(K)
$$
acts on $R_\mathbf{d}$ by base change:
$
(g_i)_i\cdot (V_\alpha)_\alpha:=(g_{t(\alpha)}V_\alpha g_{s(\alpha)}^{-1})_\alpha
$
and $G_\mathbf{d}$--orbits are in bijection with isoclasses of $Q$--representations. The stabilizer of a point  $M\in R_\mathbf{d}$ is  
$$
\textrm{Stab}_{G_\mathbf{d}}(M)=\textrm{Aut}_Q(M)
$$
where $\textrm{Aut}_Q(M)$ denotes the open subvariety of $\Hom_Q(M,M)$ consisting of invertible $Q$-morphisms. In particular, $\textrm{dim }\textrm{Aut}_Q(M)=\textrm{dim }\Hom_Q(M,M)$. 
Given another dimension vector $\mathbf{e}\in\ZZ^{Q_0}_{\geq0}$ we consider the vector space of (``degree zero'') $K$--morphisms
$$
\Hom(\mathbf{e},\mathbf{d})=\bigoplus_{i\in Q_0} \Hom_K(K^{e_i},K^{d_i})
$$ 
and the vector space of (``degree one'') $K$--morphisms
$$
\Hom(\mathbf{e},\mathbf{d}[1])=\bigoplus_{\alpha \in Q_1} \Hom_K(K^{e_{s(\alpha)}},K^{d_{t(\alpha)}}).
$$ 
In particular, if $\mathbf{d}$ is a dimension vector we get
\begin{equation}\label{Eq:DimRd}
\dim \Hom(\mathbf{d},\mathbf{d}[1])=\dim R_\mathbf{d}.
\end{equation}

Given  $N\in R_\mathbf{e}$ and $M\in R_\mathbf{d}$ we consider the map
$$
\Phi^M_N:\Hom(\mathbf{e},\mathbf{d})\rightarrow \Hom(\mathbf{e},\mathbf{d}[1]):\;(f_i)_{i\in Q_0}\mapsto (M_\alpha\circ f_{s(\alpha)}-f_{t(\alpha)}\circ N_\alpha)_{\alpha\in Q_1}
$$
This is a linear map between finite dimensional vector spaces and one can show quite easily (see e.g. \cite{R},  \cite{ASS}): 
$$
\begin{array}{cc}
\Ker \Phi_N^M=\Hom_Q(N,M),&\textrm{CoKer}\, \Phi_N^M\simeq \Ext^1_Q(N,M).
\end{array}
$$
From these formulas we immediately get:
\begin{equation}\label{Eq:Euler}
[N,M]- [N,M]^1=\dim \Hom(\mathbf{e},\mathbf{d})-\dim\Hom(\mathbf{e},\mathbf{d}[1]).
\end{equation}
We have that $\dim \Hom(\mathbf{e},\mathbf{d})=\sum_{i\in Q_0}\!\!e_id_i$ and $\dim\Hom(\mathbf{e},\mathbf{d}[1])=\sum_{\alpha\in Q_1}\!\!e_{s(\alpha)}d_{t(\alpha)}$. Given two arbitrary integer vectors $\mathbf{e},\mathbf{d}\in \ZZ^{Q_0}$ the Euler form of $Q$ is the integral bilinear form
$\langle-,-\rangle_Q:\ZZ^{Q_0}\times\ZZ^{Q_0}\rightarrow \ZZ$
 given by
$$
\langle\mathbf{e},\mathbf{d}\rangle:=\sum_{i\in Q_0}e_id_i-\sum_{\alpha\in Q_1}e_{s(\alpha)}d_{t(\alpha)}.
$$
From \eqref{Eq:Euler} above, we immediately get
\begin{equation}\label{Eq:EulForm}
\dim \Hom_Q(N,M)- \dim \Ext^1_Q(N,M)=\langle\mathbf{dim}\,N,\mathbf{dim}\,M\rangle.
\end{equation}
Formula~\eqref{Eq:EulForm} is called the homological interpretation of the Euler form.

%
In view of  \eqref{Eq:DimRd} and \eqref{Eq:EulForm}, we have
$$
\textrm{codim}_{R_\mathbf{d}}\,(G_\mathbf{d}\cdot M)=\textrm{dim}\,R_\mathbf{d}-\textrm{dim Stab}_{G_\mathbf{d}}(M)=\textrm{dim}\,\Ext^1_Q(M,M).
$$
We conclude that the orbit of $M$ is dense in $R_\mathbf{d}$ if and only if $\Ext^1_Q(M,M)=0$. A representation $M$ such that $\Ext^1_Q(M,M)=0$ is called \emph{rigid}.

A famous theorem of P.~Gabriel \cite{Gabriel} (see also \cite{BGP} for a different proof and \cite[Section~VII.5]{ASS} for a survey) states that a quiver $Q$ admits only a finite number of isoclasses of indecomposable representations if and only if $Q$ is a Dynkin quiver i.e. it is an orientation of a simply-laced Dynkin diagram of type $A,D,E$. The quiver $Q$ is called tame or affine if it is an acyclic orientation of a simply-laced extended Dynkin diagram of type $ADE$.  A quiver which is neither Dynkin nor affine is called wild. The classification of the indecomposable $Q$--representations is possible if and only if $Q$ is either Dynkin or tame and this explains the terminology. Table~\ref{Fig:ExtendedDynkinDiagrams} shows the Dynkin and the extended Dynkin diagrams.  
\begin{table}[htbp]
\begin{center}
$$
\begin{array}{|c|c|c|}
\hline
\textrm{Type}&\textrm{Dynkin}&\textrm{Extended Dynkin}\\\hline
\hline
\xymatrix@R=10pt{\\A}
&
\xymatrix@C=8pt@R=8pt{&&&&\\
\bullet\ar@{-}[r]\ar@{-}[rr]&\bullet\ar@{-}[r]&\cdots\ar@{-}[r]                   &\bullet\ar@{-}[r]&\bullet
                          }
                         &
\xymatrix@C=8pt@R=8pt{
                         &                                         &\bullet\ar@{-}[drr]                                           &                                           &\\
\bullet\ar@{-}[r]\ar@{-}[urr]&\bullet\ar@{-}[r]&\cdots\ar@{-}[r]                   &\bullet\ar@{-}[r]&\bullet
                          }
\\\hline
\xymatrix@R=10pt{\\D}&
                          \xymatrix@C=8pt@R=5pt{
&                         &                        &                                           &                                           &\bullet\\
                          &\bullet\ar@{-}[r]&\bullet\ar@{-}[r]&\cdots\ar@{-}[r]                   &\bullet\ar@{-}[ur]\ar@{-}[dr]&\\
\bullet\ar@{-}[ur]&                        &                         &                                           &                                           &\bullet
}
&
                          \xymatrix@C=8pt@R=5pt{
\bullet\ar@{-}[dr]&                         &                        &                                           &                                           &\bullet\\
                          &\bullet\ar@{-}[r]&\bullet\ar@{-}[r]&\cdots\ar@{-}[r]                   &\bullet\ar@{-}[ur]\ar@{-}[dr]&\\
\bullet\ar@{-}[ur]&                        &                         &                                           &                                           &\bullet
}
\\\hline
\xymatrix@R=10pt{\\E_6}&
\xymatrix@C=10pt@R=10pt{
                        &                    &\bullet           &     &\\
\bullet\ar@{-}[r]&\bullet\ar@{-}[r]&\bullet\ar@{-}[r]\ar@{-}[u]&\bullet\ar@{-}[r]&\bullet}
&
\xymatrix@C=10pt@R=10pt{
                        &   \bullet    \ar@{-}[r]                  &\bullet           &     &\\
\bullet\ar@{-}[r]&\bullet\ar@{-}[r]&\bullet\ar@{-}[r]\ar@{-}[u]&\bullet\ar@{-}[r]&\bullet}
\\\hline
\xymatrix@R=10pt{\\E_7}&
\xymatrix@C=10pt@R=10pt{
&& &\bullet           &  &   &\\
&\bullet\ar@{-}[r]&\bullet\ar@{-}[r]&\bullet\ar@{-}[r]\ar@{-}[u]&\bullet\ar@{-}[r]&\bullet\ar@{-}[r]&\bullet}
&
\xymatrix@C=10pt@R=10pt{
&& &\bullet           &  &   &\\
\bullet\ar@{-}[r]&\bullet\ar@{-}[r]&\bullet\ar@{-}[r]&\bullet\ar@{-}[r]\ar@{-}[u]&\bullet\ar@{-}[r]&\bullet\ar@{-}[r]&\bullet}
\\\hline\xymatrix@R=10pt{\\E_8}&
\xymatrix@C=8pt@R=10pt{
&&& &           & \bullet &   &\\
&\bullet\ar@{-}[r]&\bullet\ar@{-}[r]&\bullet\ar@{-}[r]&\bullet\ar@{-}[r]&\bullet\ar@{-}[u]\ar@{-}[r]&\bullet\ar@{-}[r]&\bullet}
&
\xymatrix@C=10pt@R=10pt{
&&& &           & \bullet &   &\\
\bullet\ar@{-}[r]&\bullet\ar@{-}[r]&\bullet\ar@{-}[r]&\bullet\ar@{-}[r]&\bullet\ar@{-}[r]&\bullet\ar@{-}[u]\ar@{-}[r]&\bullet\ar@{-}[r]&\bullet}
\\\hline
\end{array}
$$
\caption{Dynkin and extendend Dynkin diagrams}
\label{Fig:ExtendedDynkinDiagrams}
\end{center}
\end{table}

If $Q$ is Dynkin, then $R_\mathbf{d}$ consists of finitely many $G_\mathbf{d}$--orbits, and hence, since such orbits are connected and locally closed, there is a unique orbit which is dense. The corresponding representation is hence a generic representation of dimension vector 
$\mathbf{d}$ and we denote it by $\tilde{M}_\mathbf{d}$.  In particular, for Dynkin quivers a representation is generic if and only if it is rigid. 

For an arbitrary acyclic quiver $Q$ most dimension vectors do not admit a dense orbit. By Kac's theorem (\cite[Theorem~1]{Kac1}) there exists an indecompsable representation of  dimension vector $\mathbf{d}$ if and only if $\mathbf{d}$ is a positive root for the Kac-Moody algebra associated with the underlying graph of $Q$; in this case $R_\mathbf{d}$ admits a dense orbit if and only if $\mathbf{d}$ is a positive real root. 


\subsection{Almost split sequences}
We conclude this section by recalling the fundamental notions of \emph{almost split sequence}, \emph{irreducible morphism} and of \emph{Auslander--Reiten quiver} of a quiver $Q$ (see e.g. \cite{CB2}, \cite{ARS}, \cite{ASS}).   A short exact sequence 
$$
\xymatrix{
\delta:&0\ar[r]&N\ar^f[r]&E\ar^g[r]&M\ar[r]&0
}
$$ 
is called \emph{almost split} if it is non--split, both $N$ and $M$ are indecomposable and for any morphism $h:X\rightarrow M$ which is not a split epi (i.e. it does not admit  a right inverse), there exists $t:X\rightarrow E$ such that $h=g\circ t$. In particular, if $\delta$ is an almost split sequence, and $M$ is not a direct summand of $X$, then $[X,E]=[X,N\oplus M]$. 
Dually, it can be shown that $\delta$ is almost split if and only if it is non--split, both $N$ and $M$ are indecomposable and for any morphism $h:N\rightarrow X$ which is not a split mono (i.e. it does not admit a left inverse), there exists $t:E\rightarrow X$ such that $h=t\circ f$. 
A fundamental result of Auslander and Reiten \cite[Theorem~V.1.15]{ARS} states that for every indecomposable $M$ which is not projective, there exists an almost split sequence $\delta$ as above (ending in $M$), which is unique up to scalar multiples \cite[Theorem~V.1.16]{ARS}. Dually, for every indecomposable $N$ which is not injective, there exists an almost split sequence $\delta$ as above (starting from $N$). 

One can show that almost split sequences are \emph{rigid}, in the sense that they are uniquely determined (up to scalar multiples as elements of $\Ext^1(N,M)$) by the three modules $N$, $E$ and $M$ \cite[Proposition~V.2.3]{ARS}. 

Almost split sequences are closely related to the so-called Auslander-Reiten translate $\tau$ and its quasi-inverse $\tau^-$. In general the definition of $\tau$ and $\tau^-$ is quite involved since they are not  functors, but in our situation, which is the case of an hereditary basic and finite dimensional algebra, it reduces to two  simple functors:  
$$
\begin{array}{cc}
\tau=D\Ext^1(-,A)&\tau^-=\Ext^1(D(-),A).
\end{array}
$$
They are uniquely determined by the Auslander-Reiten formulas: 
\begin{equation}\label{ARFormulas}
\Hom(M,\tau N)\simeq D\Ext^1(N,M)\simeq\Hom(\tau^-M, N).
\end{equation}
If there is an amost split sequence $\delta$ as above then
$$
\begin{array}{cc}
N\simeq\tau $M$,&M\simeq\tau^-N.
\end{array}
$$
\subsection{Auslander-Reiten quiver}
A morphism $f:M\rightarrow N$ between two indecomposable $Q$--representations $M$ and $N$ is called \emph{irreducible} if $f$ is neither split mono, nor split epi (i.e. it does not admit neither a left nor a right inverse) and whenever there is a factorization $f=f_2\circ f_1$, then either $f_1$ is  split mono  or $f_2$ is  split epi (see \cite{CB2}). The irreducible morphisms from $M$ to $N$ are parametrized by the quotient space $\textrm{Irr}(M,N)=\textrm{rad}_Q(M,N)/\textrm{rad}^2(M,N)$ (see e.g. \cite[Section~1]{CB2}). Here $\textrm{rad}(M,N)=\{f:M\rightarrow N\textrm{ not an isomorphism}\}$ and $\textrm{rad}^2(M,N)=\{f:M\rightarrow N \textrm{which factor as hg with g not split mono and h not split epi}\}$. The Auslander--Reiten quiver of $Q$ is a quiver denoted by $\Gamma_Q$ whose vertices are isoclasses of indecomposable $Q$--representations, and there are $k$ arrows $[M]\rightarrow [N]$ if the dimension of the quotient space $\textrm{Irr}(M,N)$ has dimension $k$ (see e.g. \cite[Section~1]{CB2} or \cite[Section~IV.4]{ASS}). 

The Auslander-Reiten quiver $\Gamma_Q$ consists of several connected components which can be grouped together to form a decomposition 
$$
\Gamma_Q=\mathcal{P}_Q\amalg \mathcal{R}_Q\amalg \mathcal{I}_Q
$$
where $\mathcal{P}_Q$ is the component containg all the indecomposable projectives  and dually,  $\mathcal{I}_Q$ is the component containg all the indecomposable injectives. The remaining piece $\mathcal{R}_Q$ consists of all connected components which do not contain neither an injective nor a projective module. It is easy to see that both $\mathcal{P}_Q$ and $\mathcal{I}_Q$ are connected components.  An indecomposable module $M$ lies in $\mathcal{P}_Q$ (resp. $\mathcal{I}_Q$) if and only if there exists a vertex $k\in Q_0$ and an index $j\geq0$ such that $M\simeq \tau^{-j}P_k$ (resp. $M\simeq \tau^{j}I_k$). Such a module is called preprojective (resp. preinjective). With abuse of notation, we denote by $\mathcal{P}_Q$ (resp. $\mathcal{I}_Q$) the full additive subcategory of $\mathrm{Rep}(Q)$ whose indecomposable objects are preprojectives (resp. preinjectives) and its objects are still called preprojectives (resp. preinjectives). The connected components of $\mathcal{R}_Q$ are called regular and their modules are also called regular.

The components $\mathcal{P}_Q$ and $\mathcal{I}_Q$ are called the preprojective and preinjective component of $\Gamma_Q$, respectively. These components of the quiver $\Gamma_Q$ can be described combinatorially via the knitting algorithm.
(I recommend the introductory book \cite{Ralf} for more details about the construction of AR quivers of Dynkin quivers.) 

The main property of the graphs $\mathcal{P}_Q$ and $\mathcal{I}_Q$ is that they encode the information that one needs to understand homomorphisms and extensions between the indecomposable representations corresponding to their vertices. Namely, the dimension of $\Hom(M,N)$ is given by counting paths from $[M]$ to $[N]$ modulo mesh relations. To get the extension spaces one uses the Auslander-Reiten formulas \eqref{ARFormulas}. Moreover $\mathcal{P}_Q$ is a directed category,  in the sense that given two indecomposables $M,N\in\mathcal{P}_Q$ either $[M,N]^1=0$ or $[N,M]^1=0$. The same holds for $\mathcal{I}_Q$.  The regular components encode much less information due to the fact that the infinite radical of the module category contains many maps (if $Q$ is wild). They are described by Ringel \cite{Ringel:Wild}. They are far from being directed (see \cite{Kerner}).

Every module $M$ admits a unique split filtration
$M'\subseteq M''\subseteq M$
where $M'\in\mathcal{I}_Q$, $M''/M'\in\mathcal{R}_Q$ and $M/M''\in\mathcal{P}_Q$; these are called the preinjective, regular and preprojective parts of $M$, respectively.

\section{Quiver Grassmannians}\label{Sec:QG}
Let $Q$ be a finite quiver  with $n$ vertices and let $A=KQ$ be the associated (complex) path algebra. Given a dimension vector $\mathbf{d}$, an  $A$--module $M\in R_\mathbf{d}$ and another dimension vector $\mathbf{e}$ such that $\mathbf{d-e}\in\ZZ^{Q_0}_{\geq0}$, in this section we define the projective variety $Gr_\mathbf{e}(M)$ whose points parametrize submodules of $M$ of dimension vector $\mathbf{e}$.  We need to ask ourselves ``what is a submodule?''. This question has two answers: first of all, a submodule is a collection $(N_i)_{i\in Q_0}$ of vector subspaces $N_i\subseteq M_i$ such that $M_\alpha(N_i)\subseteq N_j$ for every arrow $\alpha:i\rightarrow j$ of Q. On the other hand, a submodule $N\subset M$ is an $A$--module $N$ endowed with an injective $A$--morphism $\iota: N\rightarrow M$.  The two answers provide two different realizations of $Gr_\mathbf{e}(M)$.

\subsection{First realization: universal quiver Grassmannians}
Schofield noticed that quiver Grassmannians come in families:  Let $\mathbf{d}$ and $\mathbf{e}$ be two dimension vector for $Q$ such that $e_i\leq d_i$ for all $i\in Q_0$. Let us consider the product of usual Grassmannians of  vector spaces over the field $K$ of complex numbers: $Gr_\mathbf{e}(\mathbf{d}):=\prod_{i\in Q_0}Gr_{e_i}(K^{d_i})$. Given $M\in R_\mathbf{d}(Q)$ and a point $N\in Gr_\mathbf{e}(\mathbf{d})$, the condition that $N$ defines a sub-representation of $M$ is $M_\alpha(N_{s(\alpha)})\subseteq N_{t(\alpha)}$. We hence consider the incidence variety inside $Gr_\mathbf{e}(\mathbf{d})\times R_\mathbf{d}$ given by:
\begin{equation}\label{Eq:DefUnivQG}
Gr_{\mathbf{e}}^{Q}(\mathbf{d}):=\{(N,M)\in Gr_\mathbf{e}(\mathbf{d})\times R_\mathbf{d}|\, M_\alpha(N_{s(\alpha)})\subseteq N_{t(\alpha)},\,\forall \alpha\in Q_1\}.
\end{equation}
The two projections $p_1:Gr_\mathbf{e}(\mathbf{d})\times R_\mathbf{d}\rightarrow Gr_\mathbf{e}(\mathbf{d})$ and $p_2:Gr_\mathbf{e}(\mathbf{d})\times R_\mathbf{d}\rightarrow R_\mathbf{d}$ induce two maps 
$$
\xymatrix{
&Gr_\mathbf{e}^Q(\mathbf{d})\ar_{p_\mathbf{e}}@{->}[dl]\ar^{p_\mathbf{e,d}}@{->}[dr]&\\
Gr_\mathbf{e}(\mathbf{d})&&R_\mathbf{d}
}
$$
The group $G_\mathbf{d}$ acts diagonally on $Gr_{\mathbf{e}}^{Q}(\mathbf{d})$ (see exercise 9.5) and the two maps $p_\mathbf{e}$ and $p_\mathbf{e,d}$ are $G_\mathbf{d}$--equivariant. The map $p_2$ is proper; moreover $Gr_\mathbf{e}^Q(\mathbf{d})$ is closed in $Gr_\mathbf{e}(\mathbf{d})\times R_\mathbf{d}$ and the closed embedding $Gr_\mathbf{e}^Q(\mathbf{d})\rightarrow Gr_\mathbf{e}(\mathbf{d})\times R_\mathbf{d}$ is proper. It follows that the map $p_\mathbf{e,d}$ is proper, being the composition of two proper maps.  Its image is the \emph{closed} subset of $R_\mathbf{d}$ consisting of those points $M\in R_\mathbf{d}$ which admit a sub-representation of dimension vector $\mathbf{e}$. The \emph{quiver Grassmannian} $Gr_\mathbf{e}(M)$ associated with a point $M\in R_\mathbf{d}$ is defined as the (scheme-theoretic) fiber of $p_\mathbf{e,d}$ over $M$. 

Thus quiver Grassmannians come in families: they are fibers of the proper map 
$$
p_\mathbf{e,d}:Gr_\mathbf{e}^Q(\mathbf{d})\rightarrow R_\mathbf{d} 
$$
which is called the universal quiver Grassmannian. 
As shown in \cite[section~2.2]{CFR}, the map $p_\mathbf{e}$ realizes $Gr_\mathbf{e}^Q(\mathbf{d})$ as the total space of an homogeneous vector bundle over $Gr_\mathbf{e}(\mathbf{d})$ of rank \[\sum_{\alpha\in Q_1}d_{s(\alpha)}d_{t(\alpha)}+e_{s(\alpha)}e_{t(\alpha)}-e_{s(\alpha)}d_{t(\alpha)}.\] In particular, $Gr_\mathbf{e}^Q(\mathbf{d})$ is smooth and irreducible of dimension
$$
\textrm{dim }Gr_\mathbf{e}^Q(\mathbf{d})=\langle\mathbf{e},\mathbf{d}-\mathbf{e}\rangle+\textrm{dim }R_\mathbf{d}.
$$
By upper--semicontinuity of the fiber dimension, we see that for any point $M$ in the image of $p_\mathbf{e,d}$ we have
\begin{equation}\label{Eq:IneqDimQuivGrass}
\textrm{dim }Z\geq \textrm{dim }Gr_\mathbf{e}^Q(\mathbf{d})-\textrm{dim }Im(p_\mathbf{e,d})\geq \langle\mathbf{e},\mathbf{d}-\mathbf{e}\rangle.
\end{equation}
for every irreducible component $Z$ of $Gr_\mathbf{e}(M)$.

Let $D:Rep(Q)\rightarrow Rep(Q^{op})$ be the standard duality which associates to a $Q$--representation $M$ its linear dual $DM$. There is an isomorphism of projective varieties
\begin{equation}\label{Eq:DualityGrass}
\zeta:Gr_\mathbf{e}(M)\rightarrow Gr_{\mathbf{d}-\mathbf{e}}(DM):\, L\mapsto \textrm{Ann}_M(L):=\{\varphi\in DM|\, \varphi(L)=0\}.
\end{equation}
%
\subsection{Second realization: quiver Grassmannians as geometric quotients and stratification}\label{Sec:Stratification}
Following Caldero and Reineke \cite{CR}, one  can realize quiver Grassmannians as geometric quotients.
Recall the two vector spaces $\Hom(\mathbf{e},\mathbf{d})$ and $\Hom(\mathbf{e},\mathbf{d}[1])$ of section~\ref{Sec:1} and the linear map $\Phi_L^M:\Hom(\mathbf{e},\mathbf{d})\rightarrow \Hom(\mathbf{e},\mathbf{d}[1])$ associated with $L\in R_\mathbf{e}(Q)$ and $M\in R_\mathbf{d}(Q)$. Let us assume that $e_i\leq d_i$ for all $i\in Q_0$. Given $M\in R_\mathbf{d}(Q)$ the algebraic map
$$
\Phi^M: R_\mathbf{e}\times \Hom(\mathbf{e},\mathbf{d})\rightarrow \Hom(\mathbf{e},\mathbf{d}[1]): (L,f)\mapsto \Phi^M_L(f)
$$
is used to define the following closed subvariety of $R_\mathbf{e}\times \Hom(\mathbf{e},\mathbf{d})$:
$$
\Hom(\mathbf{e},M):=\{(L,f)\in  R_\mathbf{e}\times \Hom(\mathbf{e},\mathbf{d})|\, \Phi^M_L(f)=0\}.
$$
Inside $\Hom(\mathbf{e},\mathbf{d})$ there is the open (and dense) subvariety $\Hom^0(\mathbf{e},\mathbf{d})$ consisting of collections of injective linear maps; the induced open subvariety  $\Hom^0(\mathbf{e},M):=\Hom(\mathbf{e},M)\cap \left(R_\mathbf{e}\times \Hom^0(\mathbf{e},\mathbf{d})\right)$ is of particular importance for us. Indeed the map 
$$
\phi: \Hom^0(\mathbf{e},M)\rightarrow Gr_\mathbf{e}(M):\, (L,f)\mapsto f(L)
$$
is surjective and each fiber of $\phi$ is a free orbit for the algebraic group $G_\mathbf{e}=\prod_{i\in Q_0}GL(e_i)$ (see \cite[Lemma~2]{CR}). This implies that the quiver Grassmannian $Gr_\mathbf{e}(M)$ is a geometric quotient:
\begin{equation}\label{Eq:QuotQuivGrass}
Gr_\mathbf{e}(M)\simeq \Hom^0(\mathbf{e},M)/G_\mathbf{e}.
\end{equation}
With this formulation, a point $p$ of $Gr_\mathbf{e}(M)$ is represented (up to the $G_\mathbf{e}$--action) by a pair $(L,\iota)$ where $L\in R_\mathbf{e}(Q)$ and $\iota:L\rightarrow M$ is an injective homomorphism of $Q$--representations; in this case we use the notation $p=[(L,\iota)]$. As shown by Caldero and Reineke, formula \eqref{Eq:QuotQuivGrass} implies the following description of the (scheme-theoretic) tangent space $T_{p}(Gr_\mathbf{e}(M))$ at a point $p$ of the quiver Grassmannian.
\begin{theorem}\label{Thm:TangentSpace}
Given $M\in R_\mathbf{d}(Q)$ and a point $p=[(L,\iota)]\in Gr_\mathbf{e}(M)$, the  tangent space $T_{p}(Gr_\mathbf{e}(M))$  
at $p$ is isomorphic to 
$\Hom_Q(L,M/\iota(L))$.
\end{theorem}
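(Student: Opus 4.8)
The plan is to compute the tangent space using the realization \eqref{Eq:QuotQuivGrass} of the quiver Grassmannian as the geometric quotient $\Hom^0(\mathbf{e},M)/G_\mathbf{e}$. Since $\phi:\Hom^0(\mathbf{e},M)\to \Gr_\mathbf{e}(M)$ is a principal $G_\mathbf{e}$--bundle (each fiber is a free $G_\mathbf{e}$--orbit), the differential $d\phi$ at a point $(L,f)$ lying over $p=[(L,\iota)]$ induces an isomorphism
$$
T_p(\Gr_\mathbf{e}(M))\simeq T_{(L,f)}\big(\Hom^0(\mathbf{e},M)\big)\big/\, T_{(L,f)}\big(G_\mathbf{e}\cdot(L,f)\big),
$$
where without loss of generality we may take $f=\iota$ (replacing $L$ by an isomorphic copy). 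So the computation reduces to two pieces: identifying $T_{(L,\iota)}(\Hom^0(\mathbf{e},M))$ as the kernel of a suitable linearization, and identifying the tangent space to the $G_\mathbf{e}$--orbit.

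\textbf{Key steps.} First I would write $\Hom^0(\mathbf{e},M)$ near $(L,\iota)$ as the zero locus of the map $\Phi^M:(L',f)\mapsto \Phi^M_{L'}(f)$ inside the open set $R_\mathbf{e}\times\Hom^0(\mathbf{e},\mathbf{d})$, and compute its differential at $(L,\iota)$. Differentiating $\Phi^M_{L'}(f)=\big(M_\alpha f_{s(\alpha)}-f_{t(\alpha)}L'_\alpha\big)_\alpha$ in the direction $(\dot L,\dot f)\in R_\mathbf{e}\times\Hom(\mathbf{e},\mathbf{d})$ gives
$$
\big(M_\alpha \dot f_{s(\alpha)}-\dot f_{t(\alpha)}L_\alpha\big)_\alpha-\big(\iota_{t(\alpha)}\dot L_\alpha\big)_\alpha
=\Phi^M_L(\dot f)-\iota_\bullet\circ\dot L.
$$
Thus $T_{(L,\iota)}(\Hom^0(\mathbf{e},M))=\{(\dot L,\dot f): \Phi^M_L(\dot f)=\iota_\bullet\circ\dot L\}$; here one should note that $\dim R_\mathbf{e}=\dim\Hom(\mathbf{e},\mathbf{e}[1])$ by \eqref{Eq:DimRd}, and $\iota$ being a mono means the pushforward $\Hom(\mathbf{e},\mathbf{e}[1])\to\Hom(\mathbf{e},\mathbf{d}[1])$, $\dot L\mapsto \iota_\bullet\circ\dot L$, is injective, so this tangent space is cut out cleanly (and the scheme $\Hom^0(\mathbf{e},M)$ is in fact smooth along $\phi^{-1}(p)$). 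Second, the tangent space to the orbit $G_\mathbf{e}\cdot(L,\iota)$ at $(L,\iota)$ is the image of the infinitesimal action map $\mathfrak{g}_\mathbf{e}=\Hom(\mathbf{e},\mathbf{e})\to R_\mathbf{e}\times\Hom(\mathbf{e},\mathbf{d})$ sending $g=(g_i)_i$ to $\big(g_{t(\alpha)}L_\alpha-L_\alpha g_{s(\alpha)})_\alpha,\,-\iota\circ g\big)=\big(-\Phi^L_L(g),\,-\iota\circ g\big)$.

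\textbf{Assembling the quotient.} With these two descriptions in hand, I would exhibit the natural map from the tangent space $T_p(\Gr_\mathbf{e}(M))$ to $\Hom_Q(L,M/\iota(L))$: given a tangent vector represented by $(\dot L,\dot f)$, its class is sent to the composite $L\xrightarrow{\dot f} M\twoheadrightarrow M/\iota(L)$, which one checks is a $Q$--morphism precisely because of the relation $\Phi^M_L(\dot f)=\iota_\bullet\circ\dot L$ (the failure of $\dot f$ to be a $Q$--morphism lands in $\iota(L)$, hence vanishes after projection), and which is well-defined on orbit-classes because an element $-\iota\circ g$ of the orbit direction is, after projection to $M/\iota(L)$, zero. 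Surjectivity: lift any $\psi\in\Hom_Q(L,M/\iota(L))$ to a linear collection $\dot f:L\to M$ and solve $\iota_\bullet\circ\dot L=\Phi^M_L(\dot f)$ for $\dot L$, which is possible since $\Phi^M_L(\dot f)$ lands in the image of the injective pushforward by the $Q$--linearity of $\psi$; injectivity: if $L\xrightarrow{\dot f}M\to M/\iota(L)$ is zero then $\dot f$ factors through $\iota$ as $\iota\circ h$ for a linear $h$, and the relation forces $h$ to be a $Q$--morphism, so $(\dot L,\dot f)$ is exactly the orbit direction of $-h$ (up to sign), hence zero in the quotient.

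\textbf{Main obstacle.} The conceptual content is routine once the setup is unwound; the point demanding care is the smoothness of $\Hom^0(\mathbf{e},M)$ along $\phi^{-1}(p)$ — i.e. that the differential of $\Phi^M$ has locally constant rank there — which is what legitimizes reading the tangent space off as the kernel computed above rather than a possibly larger Zariski tangent space. This is handled by the observation, already implicit in Caldero--Reineke \cite{CR}, that $\phi$ being a principal bundle over the (reduced, and in fact smooth in the relevant sense) quiver Grassmannian forces $\Hom^0(\mathbf{e},M)$ itself to be smooth along these fibers, or alternatively by a direct dimension count comparing $\dim\Ker d\Phi^M_{(L,\iota)}=\dim\Hom_Q(L,M/\iota(L))+\dim G_\mathbf{e}$ with the expected dimension; I would present the bundle argument as it is cleaner and isolates the only genuinely nontrivial input.
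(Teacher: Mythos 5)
Your overall route is the one the paper itself takes: Theorem~\ref{Thm:TangentSpace} is presented there as a consequence of the geometric quotient realization \eqref{Eq:QuotQuivGrass}, with the details delegated to Caldero--Reineke. Your computation of the differential of $\Phi^M$, of the infinitesimal orbit directions, and the three checks (well-definedness, surjectivity, injectivity) for the map sending $(\dot L,\dot f)$ to the composite $L\xrightarrow{\dot f}M\twoheadrightarrow M/\iota(L)$ are exactly the content of that argument, and they are correct.

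The problem is your final paragraph. The claim that $\Hom^0(\mathbf{e},M)$ is smooth along $\phi^{-1}(p)$ is false in general, and the purported justification --- that $\phi$ being a principal bundle over the ``reduced, smooth'' quiver Grassmannian forces smoothness of the total space --- runs the logic backwards and contradicts the Remark immediately following the theorem: for the regular Kronecker representation $R_2$, the scheme $\Gr_{(1,1)}(R_2)$ is a single point with one-dimensional tangent space, hence non-reduced, and correspondingly $\Hom^0((1,1),R_2)$ has tangent space of dimension $1+\dim G_\mathbf{e}=3$ at points of the fiber while having dimension $2$ there, so it is singular along that fiber. Likewise, injectivity of $\dot L\mapsto\iota_\bullet\circ\dot L$ says nothing about surjectivity of $d\Phi^M$, which is what ``cut out cleanly'' would require. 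Fortunately none of this is needed: the scheme-theoretic tangent space of the zero locus of $\Phi^M$ is, by definition of that scheme structure, the kernel of its differential, with no smoothness hypothesis; and the identification $T_p(\Gr_\mathbf{e}(M))\simeq T_{(L,\iota)}\Hom^0(\mathbf{e},M)/T_{(L,\iota)}\bigl(G_\mathbf{e}\cdot(L,\iota)\bigr)$ follows from local triviality of the principal $G_\mathbf{e}$--bundle $\phi$ (locally $\Hom^0(\mathbf{e},M)\simeq U\times G_\mathbf{e}$ with the group acting on the second factor), which holds whether or not the base is smooth or reduced. Delete the smoothness claim, replace the ``main obstacle'' discussion by the local-triviality argument, and your proof is complete and matches the intended one.
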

\begin{remark}
The tangent space formula only holds at level of schemes. The usual example in this sense is given by considering a regular  (indecomposable) representation $R_2$ of the Kronecker quiver of quasi--length 2 whose dimension vector is $(2,2)$. The quiver Grassmannian $Gr_{(1,1)}(R_2)$ is a point, but the tangent space has dimension one. 
\end{remark}

\begin{prop}\label{Prop:RigidQGSmooth}
A non-empty quiver Grassmannian $\Gr_\mathbf{e}(M)$ associated with a rigid representation $M$ is smooth of dimension $\langle\mathbf{e},\mathbf{dim}\,M-\mathbf{e}\rangle$. 
\end{prop}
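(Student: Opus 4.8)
The plan is to deduce smoothness from the tangent space formula of Theorem~\ref{Thm:TangentSpace} together with the dimension estimate~\eqref{Eq:IneqDimQuivGrass}, by showing that at every point the tangent space has dimension exactly $\langle\mathbf{e},\mathbf{dim}\,M-\mathbf{e}\rangle$. Concretely, fix a point $p=[(L,\iota)]\in\Gr_\mathbf{e}(M)$. By Theorem~\ref{Thm:TangentSpace} we have $\dim T_p(\Gr_\mathbf{e}(M))=[L,M/\iota(L)]$, so the whole proof reduces to computing this number.

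The key step is to bound $[L,M/\iota(L)]$ from above by $\langle\mathbf{e},\mathbf{dim}\,M-\mathbf{e}\rangle$. By the homological interpretation of the Euler form~\eqref{Eq:EulForm}, $[L,M/\iota(L)]-[L,M/\iota(L)]^1=\langle\mathbf{e},\mathbf{dim}\,M-\mathbf{e}\rangle$, since $\mathbf{dim}\,L=\mathbf{e}$ and $\mathbf{dim}\,(M/\iota(L))=\mathbf{dim}\,M-\mathbf{e}$. So it suffices to prove $\Ext^1_Q(L,M/\iota(L))=0$. I would get this from the short exact sequence $0\to L\xrightarrow{\iota} M\to M/\iota(L)\to 0$: applying $\Hom_Q(-,M/\iota(L))$ and using that $\textrm{Rep}(Q)$ is hereditary (so $\Ext^{\geq2}=0$), one obtains a surjection $\Ext^1_Q(M,M/\iota(L))\twoheadrightarrow\Ext^1_Q(L,M/\iota(L))$. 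Then applying $\Hom_Q(M,-)$ to the same sequence gives a surjection $\Ext^1_Q(M,M)\twoheadrightarrow\Ext^1_Q(M,M/\iota(L))$, and $\Ext^1_Q(M,M)=0$ because $M$ is rigid. Composing, $\Ext^1_Q(L,M/\iota(L))=0$.

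Therefore $\dim T_p(\Gr_\mathbf{e}(M))=\langle\mathbf{e},\mathbf{dim}\,M-\mathbf{e}\rangle$ at every point $p$, so the scheme-theoretic tangent space has constant dimension equal to $\langle\mathbf{e},\mathbf{dim}\,M-\mathbf{e}\rangle$. On the other hand, $M$ lies in the image of $p_\mathbf{e,d}$ since $\Gr_\mathbf{e}(M)$ is non-empty, so by~\eqref{Eq:IneqDimQuivGrass} every irreducible component $Z$ of $\Gr_\mathbf{e}(M)$ has $\dim Z\geq\langle\mathbf{e},\mathbf{dim}\,M-\mathbf{e}\rangle$. Since at any point the local dimension of the variety is at least the dimension of the largest component through it, and is always at most the tangent space dimension, we conclude $\dim_p\Gr_\mathbf{e}(M)=\langle\mathbf{e},\mathbf{dim}\,M-\mathbf{e}\rangle=\dim T_p(\Gr_\mathbf{e}(M))$ for all $p$, which is precisely smoothness of dimension $\langle\mathbf{e},\mathbf{dim}\,M-\mathbf{e}\rangle$ (in particular $\Gr_\mathbf{e}(M)$ is reduced and equidimensional).

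The only genuinely delicate point is the last bookkeeping step: one must be a little careful matching ``tangent space dimension equals local dimension at every point'' with the standard smoothness criterion, and one should note that the lower bound from~\eqref{Eq:IneqDimQuivGrass} applies to components rather than directly to local dimensions. Everything else is a routine diagram chase using heredity and the rigidity hypothesis; no serious obstacle arises there.
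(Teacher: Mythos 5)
Your proposal is correct and follows essentially the same route as the paper: tangent space formula, vanishing of $\Ext^1_Q(L,M/\iota(L))$ forced by rigidity via two surjections coming from heredity, the Euler form \eqref{Eq:EulForm} to identify the tangent space dimension, and the lower bound \eqref{Eq:IneqDimQuivGrass} to conclude. The only cosmetic difference is that you pass through $\Ext^1_Q(M,M/\iota(L))$ as the intermediate term whereas the paper factors through $\Ext^1_Q(L,M)$; these are the two equivalent paths around the same commutative square.
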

\begin{proof}
For every subrepresentation $N\subseteq M$ we have  surjective morphisms $\xymatrix@1@C=20pt{\Ext^1(M,M)\ar@{->>}[r]&\Ext^1(N,M)\ar@{->>}[r]&\Ext^1(N,M/N)}$. This proves that if $[M,M]^1=0$ then $[N,M/N]^1=0$ and hence, by \eqref{Eq:EulForm},  the tangent space $T_{N}(Gr_\mathbf{e}(M))$ has dimension $\langle\mathbf{e},\mathbf{dim}\,M-\mathbf{e}\rangle$. This shows that $\Gr_\mathbf{e}(M)$ is smooth. The dimension is computed by using  \eqref{Eq:IneqDimQuivGrass}.
\end{proof}
As a consequence of the tangent space formula Schofield proved the following. 
\begin{theorem}\cite[Theorem~3.3]{S}\label{Thm:Schofield}
The universal quiver Grassmannian $p_\mathbf{e,d}$ is surjective if and only if there exist $N\in R_\mathbf{e}$ and $R\in R_\mathbf{d-e}$ such that $[N,R]^1=0$.
\end{theorem}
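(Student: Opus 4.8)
The plan is to reduce surjectivity of the proper map $p_\mathbf{e,d}$ to a dominance statement and then to extract the vanishing of an $\Ext^1$ from the tangent space formula (Theorem~\ref{Thm:TangentSpace}) combined with the dimension estimate \eqref{Eq:IneqDimQuivGrass}. Throughout, write $\mathbf{f}:=\mathbf{d}-\mathbf{e}$ and recall that $Gr_\mathbf{e}^Q(\mathbf{d})$ is smooth and irreducible of dimension $\langle\mathbf{e},\mathbf{f}\rangle+\dim R_\mathbf{d}$, and that $p_\mathbf{e,d}$ is proper with closed image; since $R_\mathbf{d}$ is irreducible, $p_\mathbf{e,d}$ is surjective precisely when it is dominant.

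For the ``if'' direction I would argue as follows. Given $N_0\in R_\mathbf{e}$ and $R_0\in R_\mathbf{f}$ with $[N_0,R_0]^1=0$, form $M_0:=N_0\oplus R_0$ and the point $\rho_0\in Gr_\mathbf{e}(M_0)$ corresponding to the summand $N_0\subseteq M_0$. By Theorem~\ref{Thm:TangentSpace} together with \eqref{Eq:EulForm}, the tangent space at $\rho_0$ is $\Hom_Q(N_0,R_0)$, of dimension $\langle\mathbf{e},\mathbf{f}\rangle+[N_0,R_0]^1=\langle\mathbf{e},\mathbf{f}\rangle$; comparing with the lower bound \eqref{Eq:IneqDimQuivGrass} for components through $\rho_0$, the fibre of $p_\mathbf{e,d}$ over $M_0$ has local dimension exactly $\langle\mathbf{e},\mathbf{f}\rangle$ at $\rho_0$. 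Then I would invoke upper semicontinuity of fibre dimension: the locus $W$ of points of $Gr_\mathbf{e}^Q(\mathbf{d})$ at which the fibre has local dimension $\le\langle\mathbf{e},\mathbf{f}\rangle$ is open, nonempty (it contains $\rho_0$), hence dense; since $p_\mathbf{e,d}|_W$ has all fibres of dimension $\le\langle\mathbf{e},\mathbf{f}\rangle$, its image has dimension $\ge\dim W-\langle\mathbf{e},\mathbf{f}\rangle=\dim R_\mathbf{d}$, so $p_\mathbf{e,d}$ is dominant, hence surjective.

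For the ``only if'' direction, assume $p_\mathbf{e,d}$ surjective, hence dominant. Since the ground field is $\CC$, generic smoothness gives a dense open $V\subseteq R_\mathbf{d}$ over which $p_\mathbf{e,d}$ is smooth, so every fibre $Gr_\mathbf{e}(M)$ with $M\in V$ is smooth of pure dimension $\dim Gr_\mathbf{e}^Q(\mathbf{d})-\dim R_\mathbf{d}=\langle\mathbf{e},\mathbf{f}\rangle$; such fibres are nonempty by surjectivity. Picking $M\in V$ and any point $\rho=[(N,\iota)]\in Gr_\mathbf{e}(M)$, Theorem~\ref{Thm:TangentSpace} yields $\dim\Hom_Q(N,M/\iota(N))=\langle\mathbf{e},\mathbf{f}\rangle$, whence $\Ext^1_Q(N,M/\iota(N))=0$ by \eqref{Eq:EulForm}; then $N\in R_\mathbf{e}$ and $R:=M/\iota(N)\in R_\mathbf{f}$ are the desired representations.

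I expect the main obstacle to be precisely this last step: dominance on its own does not produce a point of the total space at which the fibre has the expected tangent dimension, because the fibres of $p_\mathbf{e,d}$ may be everywhere non-reduced --- this is exactly the pathology illustrated by the remark following Theorem~\ref{Thm:TangentSpace} --- and it is generic smoothness, valid because we are in characteristic zero, that rescues the argument. Everything else is routine: properness of $p_\mathbf{e,d}$, smoothness, irreducibility and dimension of the universal quiver Grassmannian, semicontinuity of fibre dimension, the tangent space formula, and the homological interpretation \eqref{Eq:EulForm} of the Euler form.
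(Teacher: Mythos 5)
Your proof is correct, and it follows precisely the route the paper indicates: the paper gives no argument of its own, stating only that the theorem is ``a consequence of the tangent space formula'' and citing Schofield, and your derivation --- combining the smoothness, irreducibility and dimension of $Gr_\mathbf{e}^Q(\mathbf{d})$ with Theorem~\ref{Thm:TangentSpace}, the Euler-form identity \eqref{Eq:EulForm}, the fibre-dimension bound \eqref{Eq:IneqDimQuivGrass}, and generic smoothness in characteristic zero --- is exactly that intended argument, including the correct identification of generic smoothness as the step that handles possibly non-reduced fibres in the ``only if'' direction. (The paper also remarks that the statement can alternatively be obtained from Bongartz's Theorem~\ref{Thm:Bongartz}, a route you do not need.)
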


The realization of a quiver Grassmannian as a geometric quotient \eqref{Eq:QuotQuivGrass} allows to define a stratification of $Gr_\mathbf{e}(M)$ as follows  (see \cite[Section~2.3]{CFR} for more details): let $p$ be the projection from $\Hom^0_Q(\mathbf{e},M)$ to $R_\mathbf{e}$; its fiber over a point $N\in R_\mathbf{e}$ is the space of injective linear maps $\Hom^0_Q(N,M)$. For each isoclass $[N]$ in $R_\mathbf{e}$ we can consider the subset $\mathcal{S}_{[N]}$ of $Gr_\mathbf{e}(M)$ corresponding under the previous isomorphism to $p^{-1}(G_\mathbf{e}\cdot N)/G_\mathbf{e}$. The locally closed subset $\mathcal{S}_{[N]}$ is sometimes called an iso-stratum of the quiver Grassmannian. In \cite[Lemma~2.4]{CFR}  it is shown that $\mathcal{S}_{[N]}$ is a locally closed subset of dimension
$$
\textrm{dim }\mathcal{S}_{[N]}=[N,M]-[N,N].
$$
In particular, a quiver Grassmannian  $Gr_\mathbf{e}(M)$ admits a stratification
$$
Gr_\mathbf{e}(M)=\coprod_{[N]}\mathcal{S}_{[N]}.
$$
In case $M$ is preprojective this stratification is finite. In this case, the irreducible components of $Gr_\mathbf{e}(M)$ are hence closures of some strata which we called the \emph{generic sub--representation types} of $Gr_\mathbf{e}(M)$ (see \cite{CFR3}). See \cite{Hubery} for a description of the irreducible components of Grassmannians of submodules of a module over an algebra.
In case of rigid modules the following holds
\begin{theorem}{\cite[Prop.~37]{CEFR}}
A quiver Grassmannian associated with a rigid quiver representation is irreducible.
\end{theorem}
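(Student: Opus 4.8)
The plan is to combine two facts already available: that a rigid representation has a dense $G_\mathbf{d}$--orbit, and that the total space $\Gr_\mathbf{e}^Q(\mathbf{d})$ of the universal quiver Grassmannian is smooth and \emph{irreducible}. The idea is to transport irreducibility from the total space down to the single fiber $\Gr_\mathbf{e}(M)$, using that over a dense orbit the family $p_{\mathbf{e},\mathbf{d}}$ looks like a twisted product. Write $\mathbf{d}=\mathbf{dim}\,M$ and assume $\Gr_\mathbf{e}(M)\neq\emptyset$, since otherwise there is nothing to prove. By Proposition~\ref{Prop:RigidQGSmooth} the variety $\Gr_\mathbf{e}(M)$ is smooth, so its connected components are irreducible; hence it suffices to prove that $\Gr_\mathbf{e}(M)$ is \emph{connected}.

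First I would restrict over the dense orbit. Since $M$ is rigid, $O:=G_\mathbf{d}\cdot M$ is a dense open subset of the (irreducible) affine space $R_\mathbf{d}$, so $U:=p_{\mathbf{e},\mathbf{d}}^{-1}(O)$ is a non--empty open subset of the irreducible variety $\Gr_\mathbf{e}^Q(\mathbf{d})$, and therefore $U$ is itself irreducible, in particular connected. Next I would identify $U$ as a twisted product: the map $p_{\mathbf{e},\mathbf{d}}$ is $G_\mathbf{d}$--equivariant and $G_\mathbf{d}$ acts transitively on $O$ with stabilizer $\textrm{Stab}_{G_\mathbf{d}}(M)=\textrm{Aut}_Q(M)$, so the standard descent for equivariant maps over a homogeneous base gives $U\cong G_\mathbf{d}\times_{\textrm{Aut}_Q(M)}\Gr_\mathbf{e}(M)$, the fibre over $M$ of $U\to O$ being precisely $\Gr_\mathbf{e}(M)$. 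Now $\textrm{Aut}_Q(M)$ is a non--empty Zariski--open subset of the affine space $\Hom_Q(M,M)$, hence irreducible and in particular connected; therefore it acts trivially on the finite set of connected components of $\Gr_\mathbf{e}(M)$, so each component is $\textrm{Aut}_Q(M)$--stable. Consequently $U=\coprod_{j} G_\mathbf{d}\times_{\textrm{Aut}_Q(M)}Z_j$, where $Z_1,\dots,Z_k$ are the connected components of $\Gr_\mathbf{e}(M)$ and each $G_\mathbf{d}\times_{\textrm{Aut}_Q(M)}Z_j$ is non--empty (it meets the fibre over $M$ in $Z_j$) and connected (being the image of the connected variety $G_\mathbf{d}\times Z_j$ under the quotient map). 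Since $U$ is connected, $k=1$, and hence $\Gr_\mathbf{e}(M)$ is connected and therefore irreducible.

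The main obstacle is Step~2, i.e. making precise that the proper equivariant family becomes a twisted product $G_\mathbf{d}\times_{\textrm{Aut}_Q(M)}\Gr_\mathbf{e}(M)$ over the open orbit. One clean way is to use that $G_\mathbf{d}=\prod_i \textrm{GL}_{d_i}$ is a \emph{special} group, so $G_\mathbf{d}\to G_\mathbf{d}/\textrm{Aut}_Q(M)\cong O$ is Zariski--locally trivial, whence so is $G_\mathbf{d}\times_{\textrm{Aut}_Q(M)}\Gr_\mathbf{e}(M)\to O$; alternatively one checks directly that the action map $G_\mathbf{d}\times\Gr_\mathbf{e}(M)\to U$ exhibits $U$ as the geometric quotient by the free $\textrm{Aut}_Q(M)$--action, which is all the last step uses. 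Everything else is formal: irreducibility of $\Gr_\mathbf{e}^Q(\mathbf{d})$ and of $R_\mathbf{d}$ is recorded in the text, connectedness of $\textrm{Stab}_{G_\mathbf{d}}(M)=\textrm{Aut}_Q(M)$ follows from its being open in an affine space, and smoothness of $\Gr_\mathbf{e}(M)$ (so that connected components coincide with irreducible components) is Proposition~\ref{Prop:RigidQGSmooth}.
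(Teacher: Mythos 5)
Your argument is correct in substance, but note that the paper itself gives no proof of this statement: it is quoted from \cite[Prop.~37]{CEFR}, so there is nothing in the text to compare against line by line. Your route --- restrict the (smooth, irreducible) universal quiver Grassmannian $Gr^Q_{\mathbf e}(\mathbf d)$ over the dense orbit $O=G_{\mathbf d}\cdot M$, observe that the resulting open set $U$ is irreducible, identify $U\to O$ as the associated bundle $G_{\mathbf d}\times_{\textrm{Aut}_Q(M)}\Gr_{\mathbf e}(M)$, and use connectedness of $\textrm{Aut}_Q(M)$ together with smoothness of $\Gr_{\mathbf e}(M)$ (Proposition~\ref{Prop:RigidQGSmooth}) to force a single component --- uses exactly the ingredients this survey sets up in Section~\ref{Sec:QG}, and is the natural way to prove the result from them. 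An alternative, closer to the stratification of Section~\ref{Sec:Stratification}, is to show that $\mathcal S_{[N]}$ has dimension $<\langle\mathbf e,\mathbf d-\mathbf e\rangle$ unless $N$ is the generic subrepresentation type, so that the equidimensional smooth variety $\Gr_{\mathbf e}(M)$ has a unique dense stratum; your argument avoids having to identify that stratum.

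One justification should be repaired. You invoke ``$G_{\mathbf d}$ is special'' to get Zariski-local triviality of $G_{\mathbf d}\to G_{\mathbf d}/\textrm{Aut}_Q(M)$, but specialness of the \emph{total} group is not what is needed (e.g.\ $\mathbb{G}_m\to\mathbb{G}_m/\mu_2$ is a quotient of a special group that is not Zariski-locally trivial); what matters is that the \emph{structure group} of the torsor, namely the stabilizer $\textrm{Aut}_Q(M)$, is special. Fortunately it is: $\textrm{Aut}_Q(M)$ is the unit group of the finite-dimensional algebra $\textrm{End}_Q(M)$, hence an extension of a product of general linear groups by a unipotent group, and such groups are special. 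Alternatively, since you only need to know that the pieces $G_{\mathbf d}\times_{\textrm{Aut}_Q(M)}Z_j$ are open in $U$ (a partition into connected subsets does not by itself contradict connectedness), you can bypass specialness entirely by working in the Euclidean topology, where $G_{\mathbf d}\to O$ is automatically a locally trivial fibration and Zariski-connectedness agrees with topological connectedness, or by checking directly that $G_{\mathbf d}\times\Gr_{\mathbf e}(M)\to U$ is a geometric quotient, so that $U$ carries the quotient topology and the saturated open sets $G_{\mathbf d}\times Z_j$ descend to open sets. With that point made precise, the proof is complete.
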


\section{Degeneration of \textrm{Q}--representations: Bongartz's theorem and applications to quiver Grassmannians}\label{Sec:Degeneration}
Given $M,N\in R_\mathbf{d}$,  $M$ is said to \emph{degenerate} to $N$ and in this case it is customary to write $M\leq_{\textrm{deg}} N$, if the closure of the orbit of $M$ contains $N$: 
$$
M\Leq N\;\;\;\stackrel{def}{\Longleftrightarrow}\;\;\; \overline{G_\mathbf{d}\cdot M}\supseteq G_\mathbf{d}\cdot N.
$$
For arbitrary finite--dimensional algebras, it is a hard problem to control such a notion. On the other hand, for algebras of finite representation type (i.e. admitting a finite number of indecomposable modules) the following very useful characterization holds: 
\begin{equation}\label{Eq:DegZwara}
\begin{array}{ccccc}
M\Leq N&\Longleftrightarrow&[X,M]\leq [X,N]&\Longleftrightarrow&[M,X]\leq [N,X].\\
&&\forall\;X\in\textrm{Rep}(Q)&&\forall\;X\in\textrm{Rep}(Q)
\end{array}
\end{equation}
For Dynkin quivers this result was obtained by Bongartz \cite{B} (partial results were obtained by Riedtmann \cite{Ried}, Abeasis-Del Fra \cite{AdF1, AdF2, AdF3}). The surprising generalization to  any algebra of finite representation type was obtained by  Zwara \cite{Z} (the second equivalence follows from Auslander--Reiten theory \cite[section~2.2]{Z}, \cite{AR85}). For general quivers, not necessarily Dynkin, the equivalence \eqref{Eq:DegZwara} holds true in case both $M$ and $N$ are  preprojective or preinjective \cite{B}. 

In the analysis of the geometry of quiver Grassmannians the following result of Bongartz can be useful. In order to formulate it we need to recall the notion of a generic quotient from Bongartz's paper \cite[Section~2.4]{B}. Suppose that  $U\in R_\mathbf{e}$ and $M\in R_\mathbf{d}$ are given, and also that there exists a monomorphism $\iota:U\rightarrow M$; in particular $\mathbf{d-e}\in\ZZ^{Q_0}_{\geq0}$ is a dimension vector. The set of all possible quotients of $M$ by $U$ is an  irreducible constructible  subset of $R_{\mathbf{d}-\mathbf{e}}$ which is $G_{\mathbf{d}-\mathbf{e}}$--invariant. If this set is the closure of the  orbit of a point $S$ then $S$ is called the \emph{generic quotient} of $M$ by $U$. In general, generic quotients may not exist. They exist for Dynkin quivers, or if $M$ is preinjective or if $M$ is regular over an affine quiver, since there are only finitely many isoclasses of quotients. 

\begin{theorem}\label{Thm:Bongartz}(\cite[Theorem~2.4]{B})
Let $M,N\in R_\mathbf{d}$ such that $M\Leq N$. Let $U$ be a representation such that $[U,M]=[U,N]$ then the following holds: 
\begin{enumerate}
\item if $U$ embeds into $N$,  it embeds into $M$ too;
\item in this case every quotient of $N$ by $U$ is  a degeneration of the generic quotient of $M$ by U, in case it exists.
\end{enumerate}
\end{theorem}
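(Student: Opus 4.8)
\textbf{Proof proposal for Theorem~\ref{Thm:Bongartz}.} The plan is to exploit the degeneration $M\Leq N$ together with the hypothesis $[U,M]=[U,N]$ and the family of quotients. For part (1), I would argue as follows. Since $M\Leq N$ we have $[U,M]\leq [U,N]$ by \eqref{Eq:DegZwara} (in the preprojective/preinjective or finite representation type setting in which the theorem is stated); but we are assuming equality, so the function $X\mapsto [U,X]$ does not jump as we specialize $N$ into the closure of $G_\mathbf{d}\cdot M$. The key point is to produce the embedding $U\hookrightarrow M$ out of the given embedding $\iota:U\hookrightarrow N$. I would consider the incidence variety whose points are pairs $(f,P)$ with $P$ in the orbit closure $\overline{G_\mathbf{d}\cdot M}$ and $f:U\rightarrow P$ a homomorphism; projecting to $\overline{G_\mathbf{d}\cdot M}$, the fiber over $P$ is $\Hom_Q(U,P)$, whose dimension is the constant $[U,M]=[U,N]$ on the locus we care about. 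Constancy of the fiber dimension forces this projection to be flat (or at least the incidence variety to be a vector bundle of the expected rank over the relevant locus), and inside it the condition ``$f$ is injective'' is open. Since $N\in\overline{G_\mathbf{d}\cdot M}$ admits an injective $f$ (namely $\iota$), the injective locus is a nonempty open subset of the total space meeting the fiber over $N$; because the projection is open and $G_\mathbf{d}\cdot M$ is dense in $\overline{G_\mathbf{d}\cdot M}$, the injective locus must also meet the fiber over (a point in the orbit of) $M$. That produces $U\hookrightarrow M$.

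For part (2), I would use the generic quotient directly. Assume the generic quotient $S$ of $M$ by $U$ exists, so that the set $\mathcal{Q}_M$ of all quotients $M/\iota'(U)$ (over all monomorphisms $\iota':U\hookrightarrow M$) is the closure of $G_{\mathbf{d}-\mathbf{e}}\cdot S$. Let $T=N/\iota(U)$ be any quotient of $N$ by $U$; I want $S\Leq T$, i.e.\ $T\in\overline{G_{\mathbf{d}-\mathbf{e}}\cdot S}=\mathcal{Q}_M$. The idea is again a family argument: over the dense orbit $G_\mathbf{d}\cdot M\subseteq \overline{G_\mathbf{d}\cdot M}$, form the space of pairs (a representation $P$ in the orbit closure, a monomorphism $U\hookrightarrow P$) — nonempty and of constant rank by part (1) and the dimension count — and take cokernels. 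This gives a family of quotients parametrized by an irreducible base dominating $\overline{G_\mathbf{d}\cdot M}$, whose fiber over the generic point $M$ is (the closure of) $G_{\mathbf{d}-\mathbf{e}}\cdot S$ and whose fiber over $N$ contains $T$. Semicontinuity of orbit closures in families then yields $T\in\overline{G_{\mathbf{d}-\mathbf{e}}\cdot S}$, which is exactly the assertion that $T$ degenerates the generic quotient $S$.

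The main obstacle I expect is the first step: turning ``the Hom-dimension $[U,-]$ is constant on the relevant part of $\overline{G_\mathbf{d}\cdot M}$'' into an honest statement that the injective-morphism locus in the incidence variety surjects (or at least dominates) onto the orbit closure. One has to be careful that the incidence variety $\{(f,P):f\in\Hom_Q(U,P)\}$ really is nice over the locus where $[U,P]$ is constant — this is where one uses that $\Hom_Q(U,-)$ is the kernel of the linear map $\Phi^{\,\bullet}_U$ of Section~\ref{Sec:1}, so constant rank of $\Phi^P_U$ gives a genuine subbundle and hence a flat family — and that the orbit $G_\mathbf{d}\cdot M$ is dense so that an open condition holding somewhere in the total space propagates to the fiber over $M$. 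The hypothesis $[U,M]=[U,N]$ is exactly what guarantees there is no jump of $\Hom$ along the degeneration, so that $N$ sits inside the locus where the family is well-behaved; without it the rank could drop at $N$ and the embedding need not lift. Once part (1) is set up correctly, part (2) is the same family argument applied to cokernels and should go through routinely using the defining property of the generic quotient.
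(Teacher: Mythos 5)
The paper itself gives no proof of Theorem~\ref{Thm:Bongartz}; it is quoted directly from Bongartz \cite[Theorem~2.4]{B}, so there is no in-text argument to compare against. Judged on its own terms, your reconstruction is correct and is essentially Bongartz's original argument. For part (1) the chain is exactly right: by upper semicontinuity of $[U,-]$ the locus $V=\{P\in\overline{G_\mathbf{d}\cdot M}\;:\;[U,P]=[U,M]\}$ is open and dense in the irreducible variety $\overline{G_\mathbf{d}\cdot M}$ and contains $N$ by hypothesis (note you do not even need \eqref{Eq:DegZwara} here --- the inequality $[U,M]\leq [U,N]$ is just semicontinuity, so no finite-type assumption enters); over $V$ the kernels of the constant-rank maps $\Phi_U^P$ form a subbundle, the total space is irreducible, the injective locus is open and nonempty since it contains $(N,\iota)$, and its image under the open bundle projection is a nonempty open subset of $V$, hence meets the dense orbit $G_\mathbf{d}\cdot M$. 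Part (2) is right in substance, but the closing appeal to ``semicontinuity of orbit closures in families'' should be replaced by the actual mechanism: the condition that the cokernel of $(P,f)$ lies in the closed $G_{\mathbf{d}-\mathbf{e}}$-stable set $\overline{G_{\mathbf{d}-\mathbf{e}}\cdot S}$ is independent of the local choice of complements used to present the cokernel as a point of $R_{\mathbf{d}-\mathbf{e}}$, and therefore cuts out a closed subset of the irreducible injective locus; this closed subset contains the preimage of the dense orbit $G_\mathbf{d}\cdot M$ (nonempty by part (1), hence dense), so it is the whole space, and evaluating at $(N,\iota)$ gives $T=N/\iota(U)\in\overline{G_{\mathbf{d}-\mathbf{e}}\cdot S}$, i.e.\ $S\Leq T$. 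With that one step made precise, the proof is complete.
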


Bongartz's theorem~\ref{Thm:Bongartz} can be used to prove that a certain quiver Grassmannian is non-empty. For example it can be used to prove Schofield's theorem~\ref{Thm:Schofield}. For Dynkin quivers, an interesting homological criterion that guarantees the non--emptiness of a quiver Grassmannian associated with an arbitrary representation  can be found in  \cite{MR}. 

\section{Examples of quiver Grassmannians}
In this section we collect examples of quiver Grassmannians. 
\subsection{Example 1: The Grassmannian}
Let $Q=\cdot$ be the quiver with one vertex and no arrows. A $Q$--representation of dimension vector $\mathbf{d}=(d)\in\ZZ_{\geq0}$ is  a vector space $M=K^d$. Let $\mathbf{e}=(e)$ be a subdimension vector, i.e. $0\leq e\leq d$. The quiver Grassmannian $\Gr_\mathbf{e}(M)$ is an ordinary Grassmannian of vector subspaces of $M$:  
$$
\Gr_\mathbf{e}(M)=\{W\subseteq \CC^d|\,\textrm{dim}\,W=e\}.
$$
Choosing a basis $\{w_1,\cdots, w_e\}$ of a point $W\in\Gr_\mathbf{e}(M)$ defines a $e\times d$-matrix 
$$
A_W=\left(\begin{array}{c}w_1\\\hline w_2\\\hline\vdots\\\hline w_e\end{array}\right)
$$ of maximal rank $e$ whose rows are the elements of the chosen basis. On the other hand every $e\times d$-matrix $A$ of maximal rank $e$ defines a point $W_A\in \Gr_\mathbf{e}(M)$ which is the span of the rows of $A$. This defines a surjective map 
$$
\xymatrix{
\mathrm{Mat}_{e\times d}^{\mathrm{max}}\ar@{->>}[r]&\Gr_\mathbf{e}(M):A\ar@{|->}[r]&W_A
}
$$
where $\mathrm{Mat}_{e\times d}^{\mathrm{max}}$ is the open subset of maximal rank $e\times d$--matrices. The group $\mathrm{GL}_e$ acts freely  on $\mathrm{Mat}_{e\times d}$ by left multiplication and does not change the span of the rows. The Grassmannian is hence a geometric quotient 
$$
\Gr_\mathbf{e}(\CC^d)\simeq \mathrm{Mat}_{e\times d}^{\mathrm{max}}/\mathrm{GL}_e.
$$
The quotient map $\xymatrix{
\mathrm{Mat}_{e\times d}^{\mathrm{max}}\ar@{->>}[r]&\Gr_\mathbf{e}(M)
}
$
can be (locally) trivialized as follows: given $A\in \mathrm{Mat}_{e\times d}^{\mathrm{max}}$ there exists column indices $1\leq j_1<j_2<\cdots<j_e\leq d$ such that the $e\times e$-submatrix $A^J$ supported on the columns $J=(j_1,\cdots, j_e)$ of $A$ is invertible. Define
$$
\Delta_J(A):=\mathrm{det}(A^J)\neq0.
$$
For every $J=(j_1<\cdots< j_e)$ define 
$$
\tilde{\mathcal{U}}_J=\left\{A\in\mathrm{Mat}_{e\times d}^{\mathrm{max}}|\,\Delta_J(A)\neq0\right\}\subset \mathrm{Mat}_{e\times d}.
$$
This is an open subset. Given $A\in U_J$ we can multiply on the left by the inverse of $A^J$ and we get a matrix $\overline{A}$ such that $\overline{A}^J=\mathbf{1}_e$. We hence see that the restriction of the quotient map to $\tilde{\mathcal{U}}_J$ provides a trivial quotient 
$$
\xymatrix{
\tilde{\mathcal{U}}_J\ar@{->>}[r]&\mathcal{U}_J=\left\{W_A|\,A^J=\mathbf{1}_e\right\}\simeq\mathbb{A}^{e(d-e)}}: A\mapsto W_A
$$
so that $\tilde{\mathcal{U}}_J\simeq \mathcal{U}_J\times\mathrm{GL}_e$. 
This provides  $\Gr_\mathbf{e}(M)$ with  the structure of an $e(d-e)$--manifold. The Grasssmannian is covered by the affine spaces $\mathcal{U}_J$: 
$$
\Gr_\mathbf{e}(M)=\bigcup_J\,\mathcal{U}_J
$$
an hence $\{\mathcal{U}_J\}$ is an affine covering of the Grassmannian, called the standard affine covering. 

\begin{example} The standard affine covering for $\Gr_2(\CC^4)$ is formed by
$$
\begin{array}{cc}
\mathcal{U}_{(1,2)}=\left\{\left\langle\begin{array}{cccc}1&0&\ast&\ast\\0&1&\ast&\ast\end{array}
\right\rangle\right\},
&
\mathcal{U}_{(1,3)}=\left\{\left\langle\begin{array}{cccc}1&\ast&0&\ast\\0&\ast&1&\ast\end{array}
\right\rangle\right\},
\\
&\\
\mathcal{U}_{(1,4)}=\left\{\left\langle\begin{array}{cccc}1&\ast&\ast&0\\0&\ast&\ast&1\end{array}
\right\rangle\right\},
&
\mathcal{U}_{(2,3)}=\left\{\left\langle\begin{array}{cccc}\ast&1&0&\ast\\\ast&0&1&\ast\end{array}
\right\rangle\right\},
\\
&\\
\mathcal{U}_{(2,4)}=\left\{\left\langle\begin{array}{cccc}\ast&1&\ast&0\\\ast&0&\ast&1\end{array}
\right\rangle\right\},
&
\mathcal{U}_{(3,4)}=\left\{\left\langle\begin{array}{cccc}\ast&\ast&1&0\\\ast&\ast&0&1\end{array}
\right\rangle\right\},
\end{array}
$$
where $\ast$ denotes an arbitrary complex number and $\langle A\rangle$ denotes the span of the rows of $A$. 
\end{example}
It is well-known (see e.g. \cite[Example~6.6]{Harris}) that the Grassmannian $\Gr_e(\CC^d)$ is a smooth, projective and irreducible algebraic variety of dimension 
\begin{equation}\label{Eq:DimGrass}
\textrm{dim}\,\Gr_e(\CC^d)=e(d-e).
\end{equation}
The tangent space at $W\in\Gr_e(\CC^d)$ is 
$$
T_W(\Gr_e(\CC^d))\simeq\Hom_\CC(W,\CC^d/W).
$$
We notice that $M=\CC^d$ is a rigid $Q$--representation (cf. Proposition~\ref{Prop:RigidQGSmooth}). 

We now highlight another fundamental property of $\Gr_e(\CC^d)$: it admits a \emph{cellular decomposition}. We say that two matrices $A$ and $B$ of the same size are row-equivalent if there exists an invertible matrix $C$ (of the appropriate size) such that $B=CA$. Since invertible matrices are products of elementary matrices, we see that this happens if and only if $A$ can be transformed into $B$ via Gaussian elimination. It is not hard to prove that every matrix $A\in \mathrm{Mat}_{e\times d}$ is row equivalent to a matrix, denoted rref(A), with the following properties: 1) The zero rows are at the bottom; 2) The pivot of every (non-zero) row is one; 3) the columns containg the pivots have only the pivots as non-zero elements; 4) the pivot of the $i$-th row is on the left of the pivot of the $(i+1)$-th row. The matrix rref(A) is called the row reduced echelon form of $A$ (and this explains the notation). The columns of rref(A) containing the pivots are called dominants. We hence get a partition (i.e. a disjoint union)
\begin{equation}\label{Eq:CellDedGrass}
\Gr_e(\CC^d)=\amalg_J \mathcal{C}_J
\end{equation}
where $J=(1\leq j_1<\cdots<j_e\leq d)$ is an $e$-set of column indices and $\mathcal{C}_J$ has the following equivalent definitions
$$
\begin{array}{rcl}
\mathcal{C}_J&=&\left\{W_A|\,\textrm{rref(A) has dominant columns J}\right\}\\
&=&\left\{W\subseteq_\CC\CC^d|\,\textrm{dim}\,W_k=\left\{\begin{array}{ll}\textrm{dim}\,W_{k-1}&\textrm{ if }k\not\in I\\\textrm{dim}\,W_{k-1}+1&\textrm{ if }k\in J\end{array}\right.\right\}
\end{array}
$$
where $e_i=(0,\cdots,0,1,0,\cdots,0)$ is the $i$--th row of the identity matrix $\mathbf{1}_d$ and $W_k:=W\cap\langle e_k,\cdots, e_n\rangle$ (notice that we identify $\CC^d$ with the row matrices). 
It is clear that $\mathcal{C}_J$ is an affine space (i.e. a \emph{cell}). To make this more explicit we consider the subgroup $\textrm{U}\subset \textrm{GL}_d$ of unipotent upper triangular $d\times d$ matrices (i.e. its elements are upper triangular matrices with $1$ on the diagonal). This group acts on $\mathrm{Mat}_{e\times d}^{\mathrm{max}}$ to the right, i.e. it acts on the columns. We immediately get
$$
\mathcal{C}_J=\langle e_j|\, j\in J\rangle \textrm{U}. 
$$
This has the following interesting consequence: the closure of  a cell is a union of cells of smaller dimension
\begin{equation}\label{Eq:CellDecGrassClosure}
\overline{\mathcal{C}_J}=\amalg_I \mathcal{C}_I
\end{equation}
where $I$ varies on a subset of index-sets that can be explicitely described. Conditions \eqref{Eq:CellDedGrass} and \eqref{Eq:CellDecGrassClosure} imply at once the following properties of $X=\Gr_e(M)$: 
\begin{enumerate}
\item $H_i(X)=0$ if $i$ is odd, and it is torsion-free if $i$ is even.
\item The cycle map $A_{\bullet}(X)\rightarrow H_\bullet(X)$ is an isomorphism.
\end{enumerate}
Varieties having those two properties are said to have property~(S). 
The cellular decomposition \eqref{Eq:CellDedGrass} depends on the choice of an ordering of the standard basis vectors of $\CC^d$. It is sometimes called the standard cellular decomposition of the Grassmannian. 
\begin{example} The standard cells of $\Gr_2(\CC^4)$ are
$$
\begin{array}{cc}
\mathcal{C}_{(1,2)}=\left\{\left\langle\begin{array}{cccc}1&0&\ast&\ast\\0&1&\ast&\ast\end{array}
\right\rangle\right\},
&
\mathcal{C}_{(1,3)}=\left\{\left\langle\begin{array}{cccc}1&\ast&0&\ast\\0&0&1&\ast\end{array}
\right\rangle\right\},
\\
&\\
\mathcal{C}_{(1,4)}=\left\{\left\langle\begin{array}{cccc}1&\ast&\ast&0\\0&0&0&1\end{array}
\right\rangle\right\},
&
\mathcal{C}_{(2,3)}=\left\{\left\langle\begin{array}{cccc}0&1&0&\ast\\0&0&1&\ast\end{array}
\right\rangle\right\},
\\
&\\
\mathcal{C}_{(2,4)}=\left\{\left\langle\begin{array}{cccc}0&1&\ast&0\\0&0&0&1\end{array}
\right\rangle\right\},
&
\mathcal{C}_{(3,4)}=\left\{\left\langle\begin{array}{cccc}0&0&1&0\\0&0&0&1\end{array}
\right\rangle\right\},
\end{array}
$$
The Hasse diagram of the closure relation is the following:
$$
\xymatrix{
&\mathcal{C}_{12}\ar@{-}[d]&\\
&\mathcal{C}_{13}\ar@{-}[dl]\ar@{-}[dr]&\\
\mathcal{C}_{14}\ar@{-}[dr]&&\mathcal{C}_{23}\ar@{-}[dl]\\
&\mathcal{C}_{24}\ar@{-}[d]&\\
&\mathcal{C}_{34}&
}
$$
\end{example}
Another useful way to describe the cells is via torus action. Let us consider the following action of the one-dimensional torus $T=\CC^\ast$ on $\Gr_e(M)$: for every $\lambda\in T$ we rescal the standard basis vectors as $\lambda\cdot e_i=\lambda^{i-1}e_i$. This defines a linear automorphism of the vector space $K^d$ and hence descends to an action on $\Gr_e(M)$. Notice that this action depends on the ordering of the standard basis. It is hence immediate to see that 
$$
\mathcal{C}_J=\{W\in\Gr_e(\CC^d)|\,\lim_{\lambda\rightarrow 0}\lambda\cdot W=\langle e_j|\,j\in J\rangle\}.
$$ 
For example, $\lim_{\lambda\rightarrow 0}\lambda\cdot\langle e_1+e_2\rangle=\lim_{\lambda\rightarrow0}\langle e_1+\lambda e_2\rangle=\langle e_1\rangle$. 

\subsection{Example 2: The complete flag variety}\label{Sec:FlagVar}
The complete flag variety is 
$$
\mathcal{F}l_{n+1}=\left\{U_1\subset U_2\subset\cdots\subset U_n\subset\CC^{n+1}|\,\textrm{dim}\,U_i=i\right\}.
$$
This is naturally a quiver Grassmannian: Let 
\begin{equation}\label{Eq:TypeAQuiver}
\xymatrix{
Q:1\ar[r]&2\ar[r]&\cdots\ar[r]&n-1\ar[r]&n}
\end{equation}
be the equioriented quiver of type $A_n$; let 
$$
\xymatrix{
M=P_1^{n+1}:\CC^{n+1}\ar^(.7){\mathbf{1}_{n+1}}[r]&\CC^{n+1}\ar^{\mathbf{1}_{n+1}}[r]&\cdots\ar[r]&\CC^{n+1}\ar^{\mathbf{1}_{n+1}}[r]&\CC^{n+1}}
$$
and let $\mathbf{e}=(1,2,3,\cdots, n)$. Then 
$$
\mathcal{F}l_{n+1}\simeq \Gr_\mathbf{e}(M). 
$$
The complete flag variety is smooth, irreducibile of minimal dimension
$$
\textrm{dim}\,\mathcal{F}l_{n+1}=\frac{n(n+1)}{2}=\langle\mathbf{e},\mathbf{dim}\,M-\mathbf{e}\rangle.
$$
We notice that $M$ is indeed a rigid $Q$--representation (cf. Proposition~\ref{Prop:RigidQGSmooth})
\subsection{Example 3: The complete degenerate flag variety}\label{Sec:DegFlagVar}
Let $V=\CC^{n+1}$ with standard basis $\{e_1,\cdots, e_{n+1}\}$. For every $k=1, \cdots, n+1$ consider the projection along $e_k$:
$$
pr_k:V\rightarrow V:\,\sum_i x_i e_i\mapsto\sum_{i\neq k}x_ie_i. 
$$
Motivated by the study of abelian degenerations of simple Lie algebras, E.~Feigin introduced the projective variety $\mathcal{F}l_{n+1}^a$ called the  (complete) $sl_n$-degenerate flag variety \cite{F1,F2,FF}. He showed that it has a realization in terms of linear algebra  as follows
$$
\mathcal{F}l_{n+1}^a\simeq\{(U_1,\cdots, U_n)\in\prod_{k=1}^n\Gr_k(V)|\, pr_{k+1}(U_k)\subseteq U_{k+1}\}
$$
and he proved that this  projective variety has marvellous properties: it is a (typically) singular, irreducible projective variety of dimension 
$$
\textrm{dim}\,\mathcal{F}l_{n+1}^a=\textrm{dim}\,\mathcal{F}l_{n+1}=\frac{n(n+1)}{2}
$$
which is a flat degeneration of the complete flag variety $\mathcal{F}l_{n+1}$; moreover, it is a normal, locally complete intersection variety which admits a cellular decomposition. Let $Q$ be the equioriented quiver \eqref{Eq:TypeAQuiver} of type $A_n$. Let $A=KQ$ be its path algebra. As a $Q$--representation $A=\oplus_{i=1}^nP_i$ is the direct sum of all the indecomposable projectives and hence $\mathbf{dim}\,A=(1,2,\cdots, n)$. Dually, $DA=\oplus_{k=1}^nI_k$ is the sum of all the indecomposable injectives and $\mathbf{dim}\,DA=(n, n-1, n-2,\cdots, 1)$. We notice that 
$$
\mathbf{dim}\,(A\oplus DA)=(n+1,n+1,\cdots, n+1)=\mathbf{dim}\,P_1^{n+1}.
$$
From the definition it is immediate to check that 
$$
\mathcal{F}l_{n+1}^a= \Gr_{\mathbf{dim}\,A}(A\oplus DA). 
$$

\subsection{Example 4: Singular quiver Grassmannians}\label{Ex4}
We give two easy examples of singular quiver Grassmannians. 
Let $Q:1\rightarrow 2$ be an $A_2$ quiver. 

The easiest example of a non-smooth quiver Grassmannian is the following:  let 
$$
\xymatrix@C=50pt{
M=\CC^2\ar^(.4){\left(\begin{array}{cc}1&0\\0&0\end{array}\right)}[r]&\CC^2\simeq S_1\oplus P_1\oplus S_2
}
$$
and let $\mathbf{e}=(1,1)$. Then $\Gr_\mathbf{e}(M)$ is the union of two $\PP^1$'s crossing in one point. Thus $\Gr_\mathbf{e}(M)$ is a connected, equidimensional curve of dimension one with two irreducible components and one singular point. 

An easy example of a singular non-equidimensional quiver Grassmannian is the following: let 
$$
\xymatrix@C=50pt{
M=\CC^2\ar^(.4){\left(\begin{array}{cc}1&0\\0&0\\0&0\end{array}\right)}[r]&\CC^3\simeq S_1\oplus P_1\oplus S_2^2
}
$$
and let $\mathbf{e}=(1,1)$. Then $\Gr_\mathbf{e}(M)$ is the union of a $\PP^2$ and a $\PP^1$ crossing in one point. Thus $\Gr_\mathbf{e}(M)$ is a connected projective variety of dimension two with two irreducible components (one of dimension $1$ and one of dimension $2$) and one singular point. 
\subsection{Example 5: A non--connected quiver Grassmannian}
Let us give an easy example of a non--connected quiver Grassmannian. Let $\xymatrix{Q:1\ar[r]\ar@<1ex>[r]&2}$ be the Kronecker quiver. Let $A$ be a $2\times 2$ complex matrix with distinct eigenvalues. Let us consider the $Q$-representation
$$
\xymatrix{
M=\CC^2\ar@<1ex>^(.6){\mathbf{1}_2}[r]\ar_(.6){A}[r]&\CC^2}
$$
and let $\mathbf{e}=(1,1)$. Then $\Gr_\mathbf{e}(M)$ consists of two distinct points (the two eigenspaces). The tangent space at those two points is zero dimensional and hence $\Gr_\mathbf{e}(M)$ is a reduced projective variety of dimension $0$ with two connected components. 
\subsection{Example 6: a smooth quiver Grassmannian with negative Euler characteristic}
We borrow this example from Derksen-Weyman-Zelevinsky's paper \cite[Example~3.6]{DWZ2}. Let $\xymatrix{Q:1\ar[r]\ar@<0.5ex>[r]\ar@<1ex>[r]\ar@<1.5ex>[r]&2}$ be the $4$-Kronecker quiver. Let $\mathbf{d}=(3,4)$ and let $\mathbf{e}=(1,3)$. We notice that $\langle\mathbf{e,d-e}\rangle=1$. It is easy to construct representations $N\in R_\mathbf{e}$ and $R\in R_\mathbf{d-e}$ such that $[N,R]=1=\langle\mathbf{e,d-e}\rangle$ and hence $[N,R]^1=0$. By Theorem~\ref{Thm:Schofield}, the universal quiver Grassmannian $\Gr_\mathbf{e}^Q(\mathbf{d})\rightarrow R_\mathbf{d}(Q)$ is surjective. Thus, there exists an open and dense subset $U$ of $R_\mathbf{d}$ such that the fiber $\Gr_\mathbf{e}(M)$ over a point $M\in U$ is smooth of minimal dimension $\langle\mathbf{e,d-e}\rangle=1>0$.  Let $X=\Gr_\mathbf{e}(M)$ be a generic fiber (i.e. $M\in U$). Then $X$ is a smooth curve  of degree $4$ and genus $3$. It hence follows that its Euler characteristic is negative: 
$
\chi(\Gr_\mathbf{e}(M))=-4<0. 
$
Notice that since $\langle\mathbf{d,d}\rangle=-11<0$, $U$ does not contain an open orbit, i.e. there is no rigid representation of dimension vector $\mathbf{d}$. 
\subsection{Every projective variety is a quiver Grassmannian}\label{Sec:EveryProj}
It is well known that every projective variety can be realized as the intersection of a Veronese variety with a linear space (see \cite[Example~2.9]{Harris}). Markus Reineke noticed that this construction can be used in a straightforward way to realize every projective variety as a quiver Grassmannian \cite{REveryProj}. In the next section~\ref{Sec:LeBruynExample} it is shown how this works in an example.  In this costruction, the quiver and the quiver representation depends on the chosen projective variety. Surprisingly, Ringel proved that the choice of the quiver does not depend on the variety, as long as the quiver is \emph{wild}.  
\begin{theorem}\cite{RingEveryProj}
Let $X$ be a projective variety and let $Q$ be a \emph{wild} quiver. Then there exists a $Q$--representation $M$ and a dimension vector $\mathbf{e}$ such that $$X\simeq \Gr_\mathbf{e}(M).$$
\end{theorem}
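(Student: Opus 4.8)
The plan is to deduce the theorem from Reineke's construction \cite{REveryProj} (illustrated in Section~\ref{Sec:LeBruynExample}) by \emph{transporting} a quiver Grassmannian along an embedding of module categories, the point being that a wild $Q$ leaves enough room inside $\mathrm{rep}(Q)$ to carry out Reineke's Veronese recipe. First I would invoke Reineke's theorem to write $X\simeq\Gr_{\mathbf{e}_0}(M_0)$ for some representation $M_0$ of a small quiver $Q_0$; inspecting the construction --- every projective variety is a linear section of a Veronese variety, and a quadratic Veronese re-embedding is assembled from finitely many linear maps between two vector spaces --- one may take $Q_0$ to be a generalized Kronecker quiver $K_m$ (two vertices $a\to b$ joined by $m$ arrows) for a suitable $m\gg0$. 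So it suffices to realize an arbitrary quiver Grassmannian over $K_m$ as a quiver Grassmannian over the given wild quiver $Q$.

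Next I would plant $K_m$ inside $\mathrm{rep}(Q)$. Since $Q$ is wild its path algebra is strictly wild (this is classical for connected wild hereditary algebras), hence there is a $\CC$--linear, exact, fully faithful functor $F\colon\mathrm{rep}(K_m)\to\mathrm{rep}(Q)$; concretely one picks two rigid bricks $T_a,T_b\in\mathrm{rep}(Q)$ with $\mathrm{End}(T_a)=\mathrm{End}(T_b)=\CC$, with $[T_a,T_b]=[T_b,T_a]=0$, $[T_a,T_a]^1=[T_b,T_b]^1=0$ and $[T_a,T_b]^1\geq m$, sends the two simples of $K_m$ to $T_a$ and $T_b$, and sends the $m$ arrows to $m$ linearly independent extension classes in $\Ext^1_Q(T_a,T_b)$. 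The existence of such a pair $T_a,T_b$ --- one may look for it among preprojective or regular exceptional representations --- is precisely where wildness of $Q$ is used. Set $M:=F(M_0)$, and let $\mathbf{e}$ be the common dimension vector of the representations $F(N)$, where $N$ ranges over the subrepresentations of $M_0$ of dimension vector $\mathbf{e}_0$.

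It then remains to prove $\Gr_{\mathbf{e}_0}(M_0)\simeq\Gr_{\mathbf{e}}(M)$. Because $F$ is exact and faithful, $N\mapsto F(N)$ embeds $\Gr_{\mathbf{e}_0}(M_0)$ into $\Gr_{\mathbf{e}}(M)$, and --- choosing $F$ to be given by explicit matrices, built from one-point extensions --- this assignment is a morphism of projective varieties. \textbf{The hard part will be surjectivity}: one must show that every subrepresentation $L\subseteq M$ with $\mathbf{dim}\,L=\mathbf{e}$ is of the form $F(N)$ for some $N\subseteq M_0$. This is not formal, because the essential image of a full exact embedding need not be closed under subobjects, so a ``junk'' subrepresentation could a priori cut into the copies of $T_a$ and $T_b$ making up $M$. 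To rule this out I would argue by rigidity and dimension count: using $[T_b,T_a]=0$, the near orthogonality of $T_a$ and $T_b$, and the numerical constraint imposed by the target dimension vector $\mathbf{e}$, one shows that any $L$ of dimension vector $\mathbf{e}$ must contain the prescribed number of \emph{complete} copies of $T_b$ and have a quotient that is a direct sum of copies of $T_a$ --- here the freedom in the size of $m$ and in the choice of $T_a,T_b$ is what makes the inequalities go the right way. Once $L$ is known to be $\{T_a,T_b\}$--filtered, fullness of $F$ lifts the filtration to a subrepresentation $N\subseteq M_0$ with $F(N)\simeq L$ compatibly with the inclusions into $M$, and the resulting assignment $L\mapsto N$ is again algebraic (the inverse of $F$ on its image is polynomial in the relevant $\mathrm{Hom}$--spaces). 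Hence $F$ induces an isomorphism $\Gr_{\mathbf{e}_0}(M_0)\simeq\Gr_{\mathbf{e}}(M)$, and $X\simeq\Gr_{\mathbf{e}}(M)$ as desired.

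I expect essentially all the work to sit in the previous paragraph: producing, for an \emph{arbitrary} wild $Q$, a pair of building blocks $T_a,T_b$ and a dimension vector $\mathbf{e}$ for which subrepresentations of $M$ of dimension vector $\mathbf{e}$ cannot escape the image of $F$. The reduction to $K_m$, the choice of $F$, and the verification that the comparison map and its inverse are morphisms of varieties are comparatively routine, the last point being a standard consequence of working with the geometric quotient presentation \eqref{Eq:QuotQuivGrass}.
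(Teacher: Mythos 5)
The paper offers no proof of this statement to compare against --- it is quoted directly from Ringel's article --- so I can only measure your plan against the published argument, whose architecture you have essentially reproduced: reduce via Reineke's construction to a Grassmannian $\Gr_{\mathbf{e}_0}(M_0)$ over a generalized Kronecker quiver $K_m$, transport $M_0$ into $\mathrm{Rep}(Q)$ along an exact fully faithful functor $F$ built from an orthogonal exceptional pair $(T_a,T_b)$, and compare the two quiver Grassmannians. Before coming to the main issue, note one concrete slip: for the lifting step (``$L$ is $\{T_a,T_b\}$--filtered, hence $L\simeq F(N)$'') you need the essential image of $F$ to be closed under extensions, which forces $[T_a,T_b]^1=m$ exactly; with $[T_a,T_b]^1>m$ there are $\{T_a,T_b\}$--filtered modules outside the image, and fullness (a statement about $\Hom$, not $\Ext^1$) does not rescue you. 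This is repairable, since $X\simeq\Gr_{\mathbf{e}_0}(M_0)$ over $K_m$ implies the same over $K_{m'}$ for every $m'\geq m$ by letting the extra arrows act by zero, so one may enlarge $m$ to match $[T_a,T_b]^1$.

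The genuine gap is the step you yourself flag, and nothing you write closes it. Since $F$ is exact, $M=F(M_0)$ carries the filtration $0\subseteq M'\subseteq M$ with $M'\simeq T_b^{d_2}$ and $M/M'\simeq T_a^{d_1}$, and any submodule $L\subseteq M$ of dimension vector $\mathbf{e}=e_1\,\mathbf{dim}\,T_a+e_2\,\mathbf{dim}\,T_b$ yields a decomposition $\mathbf{e}=\mathbf{dim}(L\cap M')+\mathbf{dim}\bigl(L/(L\cap M')\bigr)$ into the dimension vector of a submodule of $T_b^{d_2}$ plus that of a submodule of $T_a^{d_1}$. The obstruction to surjectivity lives entirely in the internal submodule lattices of $T_a$, $T_b$ and their powers: for instance, if $T_b$ has proper nonzero submodules $A,B$ with $\mathbf{dim}\,A+\mathbf{dim}\,B=\mathbf{dim}\,T_b$, then $A\oplus B\subseteq T_b^{2}$ is a decomposable submodule of dimension vector $\mathbf{dim}\,T_b$, hence not isomorphic to the brick $T_b$ and not of the form $F(N)$. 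Neither ``near orthogonality'' of the pair nor increasing $m$ helps here --- enlarging $\Ext^1(T_a,T_b)$ says nothing about which dimension vectors occur as submodules or quotients of $T_a$ and $T_b$ themselves. So the ``inequalities going the right way'' are really a list of conditions on the pair $(T_a,T_b)$ (no proper nonzero submodule of $T_b$ with the dimension vector of a proper quotient of $T_b$, the analogous condition for $T_a$, and the uniqueness of the splitting of $\mathbf{e}$ above), and the entire content of the theorem is the construction, for an \emph{arbitrary} wild $Q$, of an exceptional pair satisfying them --- for example, arranging one of the two bricks to be a simple $KQ$--module disposes of half of these conditions at once. Without exhibiting such a pair, your proof does not go through.
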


\subsection{Realization of an elliptic curve as a quiver Grassmannian}\label{Sec:LeBruynExample}
Following Reineke \cite{REveryProj}, in this section we realize an elliptic curve as a quiver Grassmannian. This example appeared in the blog of Le Bruyn \cite{LeBruynBlog}. Let us consider the elliptic curve
$$
\mathcal{E}=\{[x:y:z]\in\PP^2|\, y^2z=x^3+z^3\}.
$$
We fix the complex vector space $V=\CC^3$ with standard basis $\{e_1,e_2,e_3\}$ and its linear dual $V^\ast$ with dual basis $\{e_1^\ast,e_2^\ast,e_3^\ast\}$. The first thing we need to do is to linearize the equation defining the elliptic curve: we do this using the Veronese embedding 
$
j:V\rightarrow \textrm{Sym}^3(V):\; v\mapsto v\otimes v\otimes v.
$
Consider the linear form
$\varphi= (e_2^\ast)^2e_3^\ast-(e_1^\ast)^3-(e_3^\ast)^3:\textrm{Sym}^3(V)\rightarrow \CC$
then 
\begin{equation}\label{Eq:LeBruyn1}
\mathcal{E}=\{[v]\in \Gr_1(V)|\, \varphi(j(v))=0\}\simeq \{[w]\in \Gr_1(\textrm{Sym}^3 V)|\, \varphi(w)=0\}
\end{equation}
where $[v]$ denotes the line generated by a non-zero vector $v$ and the isomorphism is induced by the embedding $j$.  

Next, we need to describe the image of the Veronese embedding. Recall that a tensor $\omega\in V^{\otimes n}$ of the form $\omega=v^{\otimes n}$ is called decomposable. Thus, the image of $j$ consists of the decomposable tensors of $V^{\otimes 3}$. 
For $1\leq i\leq j\leq k\leq 3$ we define 
$$
\begin{array}{cc}
e_ie_j:=e_i\otimes e_j+e_j\otimes e_i,&
e_ie_je_k:=\sum_{\sigma\in S_3}e_{\sigma(i)}\otimes e_{\sigma(j)}\otimes e_{\sigma(k)}.
\end{array}
$$
The sets $\{e_ie_j\}$ and $\{e_ie_je_k\}$ form a basis of  $\textrm{Sym}^2(V)$ and  of $\textrm{Sym}^3(V)$, respectively. It is easy to describe the decomposable vectors of $V\otimes V$: 
consider the canonical isomorphism 
$$
\zeta_?:V\otimes V\simeq \Hom(V^\ast,V): \omega=v\otimes w\mapsto \zeta_\omega=(u\mapsto u(v)w)
$$
defined on the decomposable tensors and extended by linearity. It is immediate to verify that  $\omega$ is decomposable  if and only if $\textrm{rk}(\zeta_\omega)\leq 1$. 
We use this criterion to detect the decomposable tensors of $V\otimes V\otimes V$: 
We consider the linear map 
$$
\iota:\textrm{Sym}^3(V)\rightarrow V\otimes\textrm{Sym}^2(V)
$$
defined on the basis elements by
$
 e_ie_je_k\mapsto e_i\otimes e_je_k+e_j\otimes e_ie_k+e_k\otimes e_ie_j.
$
This is the injective linear map compatible with the inclusions $\textrm{Sym}^3(V)\subset V^{\otimes 3}$ and $V\otimes\textrm{Sym}^2(V)\subset V^{\otimes 3}$. In particular, it sends a decomposable vector to a decomposable vector: Indeed 
$$
\iota(\sum_{i\leq j\leq k}\alpha_i\alpha_j\alpha_k\,e_ie_je_k)=(\sum_i\alpha_ie_i)\otimes (\sum_{j\leq k}\alpha_j\alpha_ke_je_k).
$$
The standard basis $\{e_1,e_2,e_3\}$ of $V$ determines an isomorphism of vector spaces $V\otimes\textrm{Sym}^2(V)\simeq \textrm{Sym}^2(V)\oplus \textrm{Sym}^2(V)\oplus \textrm{Sym}^2(V)$. Let 
$$
\pi_1,\pi_2,\pi_3:V\otimes\textrm{Sym}^2(V)\rightarrow \textrm{Sym}^2(V)
$$
be the projections onto the three factors, respectively. Thus, by definition, a vector $\omega$ of $V\otimes\textrm{Sym}^2(V)$ is written as 
$
\omega=e_1\otimes \pi_1(\omega)+e_2\otimes \pi_2(\omega)+e_3\otimes \pi_3(\omega).
$
Let
$$
\xymatrix@C=40pt{
\psi_1,\psi_2,\psi_3:\textrm{Sym}^3(V)\ar^(.6)\iota[r]&V\otimes\textrm{Sym}^2(V)\ar^{\pi_1,\pi_2,\pi_3}[r]& \textrm{Sym}^2(V)
}
$$
be the composite maps $\psi_i=\pi_i\circ \iota$. From the criterion above we see that $\omega$ is decomposable if and only if $\pi_1(\omega)$, $\pi_2(\omega)$ and $\pi_3(\omega)$ are all contained in a same line. We hence have: 
\begin{equation}\label{Eq:LeBruyn2}
\xymatrix@C=12pt{
t\in\textrm{Im}\,j\ar@{<=>}[r]&\iota(t)\textrm{ is decomposable}\ar@{<=>}[r]&\textrm{dim Span}\{\psi_1(t), \psi_2(t), \psi_3(t)\}\leq 1.
}
\end{equation}

We consider the quiver 
$
\xymatrix{
\bullet &\ar[l]\ar@<1ex>[r]\ar@<-1ex>[r]\ar[r]\bullet&\bullet
}
$ 
and its representation
$$
M=\xymatrix@C=40pt{
\CC &\ar_(.6)\varphi[l]\ar@<1ex>^{\psi_1,\psi_2,\psi_3}[r]\ar@<-1ex>[r]\ar[r]\textrm{Sym}^3(V)&\textrm{Sym}^2(V)
}
$$ 
Then, putting together \eqref{Eq:LeBruyn1} and \eqref{Eq:LeBruyn2}, we have  
$
\mathcal{E}\simeq \Gr_\mathbf{e}(M)
$
where $\mathbf{e}=(0,1,1)$.

\section{Quiver Grassmannians of type~\textrm{A}}\label{Sec:TypeA}
In sections \eqref{Sec:FlagVar} and \eqref{Sec:DegFlagVar} we saw that the complete flag variety and the corresponding degenerate flag variety are quiver Grassmannians attached to representations of the equioriented quiver of type $A_n$. In this section we study some general properties of quiver Grassmannians of this sort, that we briefly call ``of type $A$''.

Let $Q$ be the equioriented quiver \eqref{Eq:TypeAQuiver} of type $A_n$. A $Q$--representation  $M=((M_i)_{i=1}^n, (f_i)_{i=1}^{n-1})$ is
$$
\xymatrix{
M:M_1\ar^{f_1}[r]&M_2\ar^{f_2}[r]&\cdots\ar^{f_{n-2}}[r]&M_{n-1}\ar^{f_{n-1}}[r]&n}.
$$
It is an interesting problem of linear algebra to find the normal form of a collection of linear maps $(f_1,\cdots, f_{n-1})$ by base change. It turns out that the indecomposable $Q$--representations are thin and they are supported on connected subgraphs of $Q$. Given $1\leq i\leq j\leq n$ we denote by $U_{i,j}$ the indecomposable supported on the interval $[i,j]$. The projectives are $P_i=U_{i,n}$ and the injectives are $I_k=U_{1,k}$. The AR-translate $\tau U_{i,j}$ of $U_{i,j}$ is $\tau U_{i,j}=U_{i+1,j+1}$ and the AR-quiver is the following (for $n=4$): 
$$
\xymatrix@R=15pt@C=15pt{
&&&U_{1,4}\ar[dr]&&&\\
&&U_{2,4}\ar[ur]\ar[dr]&&U_{1,3}\ar[dr]\ar@{..>}_\tau[ll]&&\\
&U_{3,4}\ar[ur]\ar[dr]&&U_{2,3}\ar[ur]\ar[dr]\ar@{..>}_\tau[ll]&&U_{1,2}\ar[dr]\ar@{..>}_\tau[ll]&\\
U_{4,4}\ar[ur]&&U_{3,3}\ar[ur]\ar@{..>}_\tau[ll]&&U_{2,2}\ar[ur]\ar@{..>}_\tau[ll]&&U_{1,1}\ar@{..>}_\tau[ll]\\
}
$$
The space of homomorphisms (and of extensions) between them are at most one-dimensional and are given as follows
\begin{equation}
[U_{ij}, U_{k\ell}]=\left\{\begin{array}{cc}1&\textrm{ if }k\leq i\leq \ell\leq j\\0&\textrm{otherwise}\end{array}\right.
\end{equation}
\begin{equation}\label{Eq:ExtSpaceTypeA}
[U_{k\ell},U_{ij}]^1=[U_{ij}, U_{k+1\ell+1}]=\left\{\begin{array}{cc}1&\textrm{ if }k+1\leq i\leq \ell+1\leq j\\0&\textrm{otherwise}\end{array}\right.
\end{equation}

We can order the indecomposable $Q$--representations as $M(1)<M(2)<\cdots<M(N)$ (where $N=\frac{n(n+1)}{2}$) so that 
\begin{equation}\label{Eq:OrderTypeA}
\xymatrix{
M(i)<M(\ell)\ar@{=>}[r]&\Ext^1(M(i),M(\ell))=0.
}
\end{equation}
As shown in \cite[Remark~7]{CFFFR}, a natural choice is the following (for $n=4$)
$$
\xymatrix@R=15pt@C=15pt{
&&&M(4)\ar[dr]&&&\\
&&M(3)\ar[ur]\ar[dr]&&M(7)\ar[dr]\ar@{..>}_\tau[ll]&&\\
&M(2)\ar[ur]\ar[dr]&&M(6)\ar[ur]\ar[dr]\ar@{..>}_\tau[ll]&&M(9)\ar[dr]\ar@{..>}_\tau[ll]&\\
M(1)\ar[ur]&&M(5)\ar[ur]\ar@{..>}_\tau[ll]&&M(8)\ar[ur]\ar@{..>}_\tau[ll]&&M(10)\ar@{..>}_\tau[ll]\\
}
$$
To a $Q$-representation $M=((M_i),(f_i))$ we associate the sequence of non-negative integers $\mathbf{r}^M=(r_{i,j}^M|\, 1\leq j\leq n)$ given by the ranks of the composite linear maps $f_i$'s, i.e.  $r_{i,j}^M=rk(f_{j-1}\circ\cdots\circ f_i)$ for $i<j$ and $r_{i,i}^M=\textrm{dim}\,M_i$.  If $M$ decomposes as direct sum of indecomposables as $M=\oplus_{i,j}U_{i,j}^{m_{i,j}}$, the relation between the rank tuple $\mathbf{r}^M$ and the tuple of multiplicities $(m_{i,j})$ is given by 
$$
\begin{array}{cc}
r_{i,j}^M=\sum_{k\leq i\leq j\leq \ell}m_{k,\ell},&m_{ij}^M=r_{i,j}^M-r_{i-1,j}^M-r_{i,j+1}^M+r_{i-1,j+1}^M.
\end{array}
$$
In particular, the sequence $(r_{i,j}^M)$ satisfies the inequalities
$$
r_{i,j}^M+r_{i-1,j+1}^M\geq r_{i,j+1}^M+ r_{i-1,j}^M.
$$
Conversely, let $\mathbf{r}=(r_{i,j}|\, 1\leq i\leq j\leq n)$ be a sequence of non--negative integers which fulfill the inequalities 
\begin{equation}\label{Eq:IneqTypeA}
r_{i,j}+r_{i-1,j+1}\geq r_{i,j+1}+r_{i-1,j}
\end{equation}
for all $1\leq i\leq j\leq n$, with the convention $r_{i,j}=0$ if $i=0$ or $j=n+1$. An easy induction shows that \eqref{Eq:IneqTypeA} are equivalent to 
\begin{equation}\label{Eq:IneqTypeA2}
r_{i,k}+r_{j,\ell}\geq r_{j,\ell}+r_{i,k}
\end{equation}
for every $1\leq i\leq j\leq k\leq \ell\leq n$. Then one can easily contruct a $Q$-representation $M$ such that $\mathbf{r}^M=\mathbf{r}$. 
A sequence of non-negative integers $\mathbf{r}=(r_{i,j})$ satisfying the inequalities \eqref{Eq:IneqTypeA2} is called a rank sequence. By the above, $M\simeq N$ if and only if $\mathbf{r}^M=\mathbf{r}^N$, i.e. the isoclasses of $Q$--representations are parametrized by rank sequences.  Moreover it is proved in \cite{AdF1} that $M\leq_{deg}N$ if and only if $r_{ii}^M=r_{ii}^N$ and $r_{ij}^M\geq r_{ij}^N$ for every $i,j$; in this case we briefly write $\mathbf{r}^M\geq \mathbf{r}^N$.\subsection{Cellular decomposition}

Let $M=(M_i,f_i)$ be a $Q$--representation. Decompose $M=\oplus_{k=1}^s M(k)$ as a direct sum of its indecomposable direct summands, so that $[M(i),M(j)]^1=0$ for $i<j$. 

For every $\lambda\in\CC^\ast$ consider the automorphism $f_\lambda:M\rightarrow M$ of $M$ which rescales the $M(k)$'s as follows
$$
f_\lambda(m)=\lambda^{k-1}m\;\;\;\forall m\in M(k).
$$
This gives an action 
$$
T\times\Gr_\mathbf{e}(M)\rightarrow\Gr_\mathbf{e}(M):\;(\lambda,N)\mapsto \lambda\cdot N
$$ 
of the one-dimensional torus $T=\CC^\ast$ on every quiver Grassmannian associated with $M$. This action has finitely many  $T$--fixed points which are coordinate subrepresentations:
\begin{equation}\label{Eq:TorusFixedPointsTypeA}
\Gr_\mathbf{e}(M)^T=\prod_{\mathbf{f}_1+\cdots+\mathbf{f}_s=\mathbf{e}}\Gr_{\mathbf{f}_1}(M(1))\times\cdots\times \Gr_{\mathbf{f}_s}(M(s))
\end{equation}
Notice that since the $M(k)$'s are thin, every non-empty quiver Grassmannian associated to them is a point and $\Gr_\mathbf{e}(M)^T$ is a finite collection of points.  For a $T$-fixed point $L$ consider the attracting space 
$$
\mathcal{C}_L=\{N\in\Gr_\mathbf{e}(M)|\,\lim_{\lambda\rightarrow 0}\lambda.N=L\}.
$$
We can represent the representation $M$ as a collection of strings, ordered from top to bottom, according to \eqref{Eq:OrderTypeA}. This collection of strings is called the coefficient quiver of $M$. Given a $T$-fixed point $L\in \Gr_\mathbf{e}(M)^T$ we colour black the vertices corresponding to $L$ and to compute the dimension of $\mathcal{C}_L$ we only need to count how many white vertices there are below each black \emph{source} of $L$. For example, if $n=3$ and $M=A\oplus DA$, $\mathbf{e}=\mathbf{dim}\,A=(1,2,3)$ the following is coefficient quiver of $M$ together with a $T$-fixed point $L$ highlighted by black vertices
$$
\xymatrix@R=0.5pt{
&&\bullet&1\\
&\circ\ar[r]&\bullet&2\\
\circ\ar[r]&\bullet\ar[r]&\bullet&3\\
\circ\ar[r]&\circ\ar[r]&\circ&4\\
\bullet\ar[r]&\bullet&&5\\
\circ&&&6}
$$
Then the dimension of $\mathcal{C}_L$ is $4$ (see \cite[Sec.~6.4]{CFFFR}). 
By elementary linear algebra techniques one can prove the following result. 
\begin{theorem}\label{Thm:CellDecTypeA}\cite[Thm~12]{CFFFR}
For every $T$--fixed point $L$, the attrcting set $\mathcal{C}_L$ is an affine space and the quiver Grassmannian $\Gr_\mathbf{e}(M)$ admits a cellular decomposition
$$
\Gr_\mathbf{e}(M)=\coprod_{L\in\Gr_\mathbf{e}(M)^T}\mathcal{C}_L.
$$
Moreover the points of every cell $\mathcal{C}_L$ are isomorphic to $L$ and hence every iso-stratum $\mathcal{S}_L$  decomposes as union of cells. 
\end{theorem}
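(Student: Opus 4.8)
The plan is to realise the decomposition explicitly as a Bia\l{}ynicki--Birula decomposition for the $T=\CC^\ast$-action $f_\lambda$, parametrising each attracting set $\mathcal{C}_L$ by a family of ``perturbed embeddings'' of $L$ into $M$. Throughout I fix the adapted basis of $M$ underlying the coefficient quiver, so that $M_k$ has the distinguished ordered basis $\{v^{(t)}_k\}$, $t$ running over the summands $M(t)=U_{a_t,b_t}$ with $a_t\le k\le b_t$ and ordered by $t$, with $f_k(v^{(t)}_k)=v^{(t)}_{k+1}$ if $b_t\ge k+1$ and $=0$ otherwise; I use the summand ordering of \eqref{Eq:OrderTypeA}, for which the right endpoints $b_t$ are non-increasing in $t$.

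First I would dispose of the soft part. Since each $M(t)$ is thin, every non-empty quiver Grassmannian of an $M(t)$ is a point, so by \eqref{Eq:TorusFixedPointsTypeA} the fixed locus $\Gr_\mathbf{e}(M)^T$ is finite and consists precisely of the coordinate subrepresentations $L$ (those with $L_k=\langle v^{(t)}_k:t\in J_k\rangle$ for a set $J_k=J_k(L)$). As $\Gr_\mathbf{e}(M)$ is complete and $T$-stable, $\lim_{\lambda\to0}\lambda\cdot N$ exists and is a fixed point for every $N$, giving the set-theoretic partition $\Gr_\mathbf{e}(M)=\coprod_L\mathcal{C}_L$. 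The projections $p_k\colon\Gr_\mathbf{e}(M)\to\Gr_{e_k}(M_k)$ are $T$-equivariant, so $\lim_{\lambda\to0}\lambda\cdot N=L$ iff $\lim_{\lambda\to0}\lambda\cdot N_k=L_k$ for all $k$; and at vertex $k$ the torus acts on $M_k$ with strictly ordered weights $t-1$, so by the treatment of ordinary Grassmannians in Example~1 the attracting cell of $L_k$ is the standard cell $\mathcal{C}_{J_k}$, an affine space on which $N_k$ acquires a unique reduced echelon representative with pivot columns $J_k$. Hence $\mathcal{C}_L=\Gr_\mathbf{e}(M)\cap\prod_k\mathcal{C}_{J_k}$ is locally closed, cut out of the affine space $\prod_k\mathcal{C}_{J_k}$ by the incidence conditions $f_k(N_k)\subseteq N_{k+1}$.

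Next I would build the parametrisation. Call a black vertex $v^{(t)}_k$ of $L$ a \emph{source} if $k=a_t$ or $v^{(t)}_{k-1}$ is white (inside each string the black vertices form a terminal segment, so each string contributes at most one source), and let $d_L$ be the number of pairs consisting of a black source $v^{(t)}_k$ and a white vertex $v^{(t')}_k$ with $t'>t$ --- exactly the count described before the statement. Given $z\in\CC^{d_L}$, define $\iota_z\colon L\to M$ by sending the basis vector at a black source $v^{(t)}_k$ to $v^{(t)}_k+\sum z_\bullet v^{(t')}_k$ (sum over white $v^{(t')}_k$ with $t'>t$) and propagating along each string via $\iota_z\circ f^{L}=f\circ\iota_z$, which pins down $\iota_z$ on the remaining (non-source) black vertices. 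One then checks: because the $b_t$ are non-increasing no correction survives past the end of its string, so $\iota_z$ is a genuine $Q$-morphism; at each vertex it is unitriangular for the ordering by $t$, hence injective; and rescaling the image of each $\iota_z(v^{(t)}_k)$ by $\lambda^{-(t-1)}$ kills the higher-weight corrections, so $\lim_{\lambda\to0}\lambda\cdot\iota_z(L)=L$. Thus $N(z):=\iota_z(L)\in\mathcal{C}_L$ and $N(z)\simeq L$, which already yields a morphism $\CC^{d_L}\to\mathcal{C}_L$.

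Finally I would show this morphism is an isomorphism. For a black source $v^{(t)}_k$ the vector $\iota_z(v^{(t)}_k)$ has corrections only in non-pivot (white) columns, so it is exactly the echelon row of $N(z)_k$ at pivot $t$; hence $z$ is read off from $N(z)$, giving injectivity. For surjectivity, given $N\in\mathcal{C}_L$ take $z$ to be the corresponding echelon entries at the black-source pivots and run the same propagation: each $\iota_z(v^{(t)}_k)$ at a black source is then the echelon row of $N_k$, so it lies in $N_k$, and propagating it forward stays inside $N$ because $N$ is a subrepresentation; comparing dimensions gives $\iota_z(L)=N$. So $z\mapsto N(z)$ is a bijection with polynomial inverse, $\mathcal{C}_L\simeq\CC^{d_L}$, and in particular every point of $\mathcal{C}_L$ is isomorphic to $L$. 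Since each $\overline{\mathcal{C}_L}$ is closed and $T$-stable it is a union of cells, so the finitely many affine cells are filtrable and the decomposition is cellular; and because the points of $\mathcal{C}_{L'}$ are all $\simeq L'$ we get $\mathcal{S}_L=\coprod_{L'\simeq L}\mathcal{C}_{L'}$. The one genuinely delicate point is the bookkeeping behind surjectivity: the incidence conditions are a priori \emph{bi}linear in the echelon coordinates of $\prod_k\mathcal{C}_{J_k}$ (a propagated correction can land in a pivot column, forcing extra row reduction), and one must verify that the chosen ordering makes the resulting system triangular, so that its solution locus is precisely the graph of a polynomial map in the $d_L$ source parameters. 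The rest is routine manipulation with echelon forms and weights.
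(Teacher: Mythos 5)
Your construction of the cells is correct and fleshes out exactly the argument the paper sketches before the statement (the paper itself only cites \cite[Thm~12]{CFFFR} and describes the torus action, the coefficient quiver and the dimension count): the fixed points are the coordinate subrepresentations, $\mathcal{C}_L=\Gr_\mathbf{e}(M)\cap\prod_k\mathcal{C}_{J_k(L)}$, and the unitriangular perturbed embeddings $\iota_z$ identify $\mathcal{C}_L$ with $\CC^{d_L}$, where $d_L$ counts white vertices below black sources. Two remarks. First, the ``genuinely delicate point'' you flag at the end is already disposed of by your own surjectivity argument: since the propagated vectors lie in $N$ (because $N$ is a subrepresentation) and are unitriangular, they span $N_k$ by a dimension count, so you never need them to be in echelon form and no bilinear system has to be triangularized. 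Second, you correctly isolate where the ordering enters --- the non-increasing right endpoints force $b_{t'}\le b_t$ for $t'>t$, so corrections die with their string --- which is precisely what fails in the paper's $S_1\oplus P_1\oplus S_2$ example with the wrong ordering.

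The one genuine gap is the justification of the $\alpha$-partition property, which the paper's definition of cellular decomposition requires. The inference ``$\overline{\mathcal{C}_L}$ is closed and $T$-stable, hence a union of cells'' is not valid: a closed $T$-stable subset can meet an attracting set without containing it (take $\PP^2$ with weights $0,1,2$ on the coordinates; the coordinate line $\{y=0\}$ is closed and $T$-stable but meets the big cell $\{x\neq0\}$ in a proper subset), and for singular varieties Bia\l{}ynicki--Birula decompositions need not be filtrable in general. The fix is already in your hands: from $\mathcal{C}_L=\Gr_\mathbf{e}(M)\cap\prod_k\mathcal{C}_{J_k(L)}$ and the fact that $\overline{\prod_k\mathcal{C}_{J_k(L)}}=\prod_k\overline{\mathcal{C}_{J_k(L)}}$ is a union of products of standard cells by \eqref{Eq:CellDecGrassClosure}, one gets $\overline{\mathcal{C}_L}\subseteq\coprod_{L'}\mathcal{C}_{L'}$, the union over fixed points $L'$ with $J_k(L')$ preceding $J_k(L)$ in the closure order of the ordinary Grassmannian for every $k$; any linear extension of this product partial order then yields the required filtration. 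With that substitution the proof is complete.
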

The choice of the ordering of the indecomposable direct summands $M(i)$'s of $M$ is necessary to make theorem~\ref{Thm:CellDecTypeA} working. Indeed, consider the case $n=2$ and the representation  $M=S_1\oplus P_1\oplus S_2$ of example~4 with the ordering given by 
$$
\xymatrix@R=3pt{
\bullet&&1\\
\circ\ar[r]&\bullet&2\\
&\circ&3
}
$$
and let $L$ be the highlighted point of $\Gr_{(1,1)}(M)$. One can easily verify that $\mathcal{C}(L)\simeq \{([1:\lambda],[1:\mu])\in\PP^1\times\PP^1|\, \lambda\mu=0\}$ which is not a cell.
\subsection{Schubert quiver Grassmannians}
Let $Q$ be the equioriented quiver of type $A_n$ as in the previous section. A $Q$--representation $R=((R_i), (f_i))$ is projective if and only if every linear map $f_i:R_{i}\rightarrow R_{i+1}$ is injective. In other words, $R$ is projective if and only if $R_\bullet$ is a flag in $R_n$. The automorphism group $Aut(R)$ of $R$ consists of those $g\in GL(R_n)$ which fix the flag $R_\bullet$; thus, $Aut(R)$ is  a parabolic subgroup of $GL(R_n)$. This implies that the quiver Grassmannians associated with $R$ are Schubert varieties inside the partial flag variety $GL(R_n)/Aut(R)$. Given a $Q$-representation $M$, let us consider its minimal projective resolution
$$
\xymatrix{
0\ar[r]&P\ar^\iota[r]&R\ar^\pi[r]&M\ar[r]&0}.
$$
For example if $n=3$ and $M=A\oplus DA$ then the diagram 
$$
\xymatrix@R=1pt{
&&\circ\\
&\circ\ar[r]&\circ\\
\circ\ar[r]&\circ\ar[r]&\circ\\
\circ\ar[r]&\circ\ar[r]&\circ\\
\circ\ar[r]&\circ\ar[r]&\ast\\
\circ\ar[r]&\ast\ar[r]&\ast}
$$
describes the minimal projective resolution of $M$: the $\ast$ form (the coefficient-quiver of ) $P$, the whole diagram is (the coefficient-quiver of )  $R$ and the diagram without $\ast$ is (the coefficient-quiver of )  $M$. 
We can use the surjective morphism $\pi:R\rightarrow M$ to embed  a quiver Grassmannian $\Gr_\mathbf{e}(M)$ associated with $M$ into the partial flag variety  $GL(R_n)/Aut(R)$ (see \cite[Prop.~2.1]{CFR4}): we define the map
$$
\zeta:\Gr_\mathbf{e}(M)\rightarrow\Gr_{\mathbf{e}+\mathbf{dim}\,P}(R):\; N\mapsto\pi^{-1}(N).
$$
This is a closed embedding and we refer to it as the standard embedding of $\Gr_\mathbf{e}(M)$ inside a partial flag variety. Let $B\subseteq Aut(R)$ be the Borel subgroup of $GL(R_n)$ contained in the parabolic subgroup $Aut(R)$. It is a natural problem to find conditions on the $Q$--representation $M$ and to the dimension vector $\mathbf{e}$ so that the standard embedding of $\Gr_\mathbf{e}(M)$ into a partial flag manifold is stable by $B$. Indeed, if this happens, then the irreducible components of $\Gr_\mathbf{e}(M)$ are Schubert varieties. For this reason, we called such varieties Schubert quiver Grassmannians. 

 In \cite{CFR4} we studied this problem and found a purely combinatorial solution. To state the result we introduced the following terminology:

\begin{definition}\label{Def:Catenoid}
We say that a $Q$--representation $M$ is a \emph{catenoid} if the indecomposable direct summands of $M$ lie in an oriented (connected) path of  the AR-quiver of $Q$. 
\end{definition}

To illustrate definition~\ref{Def:Catenoid} let us consider the case $n=4$; given a $Q$-representation $M$ we highlight with $\bullet$ the vertices of the AR-quiver corresponding to the indecomposable direct summands of $M$. Consider the following two configurations:
$$
\begin{array}{c|c}
\xymatrix@R=10pt@C=10pt{
&&&\circ\ar[dr]&&&\\
&&\bullet\ar[ur]\ar[dr]&&\circ\ar[dr]\ar@{..>}_\tau[ll]&&\\
&\circ\ar[ur]\ar[dr]&&\bullet\ar[ur]\ar[dr]\ar@{..>}_\tau[ll]&&\bullet\ar[dr]\ar@{..>}_\tau[ll]&\\
\bullet\ar[ur]&&\circ\ar[ur]\ar@{..>}_\tau[ll]&&\bullet\ar[ur]\ar@{..>}_\tau[ll]&&\circ\ar@{..>}_\tau[ll]\\
}
&
\xymatrix@R=10pt@C=10pt{
&&&\circ\ar[dr]&&&\\
&&\bullet\ar[ur]\ar[dr]&&\circ\ar[dr]\ar@{..>}_\tau[ll]&&\\
&\circ\ar[ur]\ar[dr]&&\bullet\ar[ur]\ar[dr]\ar@{..>}_\tau[ll]&&\bullet\ar[dr]\ar@{..>}_\tau[ll]&\\
\bullet\ar[ur]&&\bullet\ar[ur]\ar@{..>}_\tau[ll]&&\bullet\ar[ur]\ar@{..>}_\tau[ll]&&\circ\ar@{..>}_\tau[ll]\\
}
\\\textrm{Catenoid}&\textrm{Not a Catenoid}
\end{array}
$$
Then a representation whose configuration of its indecomposables is shown on the left  is a catenoid, while the one on the right is not a catenoid. Notice that the multiplicities do not play any r\^ole.  
\begin{theorem}\label{Thm:SchubertQuivGrass}
A quiver Grassmannian $\Gr_\mathbf{e}(M)$ is a Schubert quiver Grassmannian if and only if $M$ is a catenoid. 
\end{theorem}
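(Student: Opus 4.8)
The plan is to reduce both properties — being a catenoid, and being a Schubert quiver Grassmannian — to the same combinatorial condition on the indecomposable summands of $M$, namely the absence of a ``crossing pair''. Fix the minimal projective presentation $0\to P\xrightarrow{\iota}R\xrightarrow{\pi}M\to 0$ and the standard embedding of $\Gr_\mathbf{e}(M)$ into the partial flag manifold attached to $R$. Since $R$ is projective, its coefficient quiver is a disjoint union of intervals $[a,n]$, one for each indecomposable summand $U_{a,b}$ of $M$; taking the basis of $R_n$ formed by the rightmost vectors of these intervals and ordering the intervals so that the left endpoints $a$ are weakly increasing turns $R_\bullet$ into a flag of coordinate initial segments, so that $\mathrm{Aut}(R)$ is the associated standard parabolic and the image of $\Gr_\mathbf{e}(M)$ is identified with the closed set $\mathcal X=\{N_\bullet\mid \iota(P)_i\subseteq N_i\subseteq R_i,\ \dim N_i=e_i+(\dim P)_i\}$, where $\iota(P)_\bullet$ is the coordinate subrepresentation carried by the tails $[b+1,n]$. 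The first step is to record this dictionary together with the following elementary fact, to be used on both sides: along an oriented path of $\Gamma_Q$ both coordinates of $U_{i,j}$ vary monotonically, hence two summands $U_{a,b},U_{a',b'}$ lie on a common oriented path if and only if they do not form a \emph{crossing pair} (i.e.\ $a<a'$ and $b>b'$, or symmetrically); an easy induction joining consecutive summands by segments of a single path shows that $M$ is a catenoid precisely when no two of its summands cross.

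For the ``if'' direction: if $M$ is a catenoid there is no crossing pair, so the intervals can be ordered with \emph{both} endpoints weakly increasing; with that ordering $\iota(P)_\bullet$ also becomes a flag of initial segments, so the upper triangular Borel $B\subseteq \mathrm{Aut}(R)$ fixes $R_\bullet$ and $\iota(P)_\bullet$ simultaneously and therefore stabilizes $\mathcal X$. A $B$-stable closed subvariety of a (partial) flag variety has Schubert varieties as irreducible components — each component is $B$-stable, contains a $B$-fixed point by the Borel fixed-point theorem, is a union of $B$-orbit closures and, being irreducible, is a single Schubert variety — so $\Gr_\mathbf{e}(M)$ is a Schubert quiver Grassmannian, for every dimension vector $\mathbf e$.

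For the ``only if'' direction: suppose $M$ is not a catenoid and pick a crossing pair of summands $U_{a,b},U_{a',b'}$ with $a<a'$ and $b>b'$. Take $\mathbf e=\mathbf{dim}\,U_{a',b'}$, so that $U_{a',b'}$, being a direct summand of $M$, is a point of the non-empty quiver Grassmannian $\Gr_\mathbf{e}(M)$. Because $a<a'$ \emph{strictly}, the root unipotent sending the basis vector of the $[a',n]$-interval to itself plus a scalar multiple of the basis vector of the $[a,n]$-interval lies in the unipotent radical of the parabolic $\mathrm{Aut}(R)$, hence in \emph{every} Borel of $\mathrm{Aut}(R)$. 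A short computation with the inequalities defining $\mathcal X$ shows that this unipotent preserves all the conditions $N_i\subseteq R_i$ automatically, but fails to preserve $\iota(P)_{b'+1}\subseteq N_{b'+1}$ as soon as some $N\in\mathcal X$ does not contain the degree-$(b'+1)$ vector of the $[a,n]$-interval; and $N=\pi^{-1}(U_{a',b'})$ is such an $N$, since $U_{a',b'}$ vanishes at vertex $b'+1$ while $\pi$ restricts on the $P_a$-summand to the canonical surjection $P_a\twoheadrightarrow U_{a,b}$, an isomorphism in degrees $\le b$, and $a\le b'+1\le b$. Therefore $\mathcal X$ is stabilized by no Borel of $\mathrm{Aut}(R)$, and $\Gr_\mathbf{e}(M)$ is not a Schubert quiver Grassmannian.

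The crux, I expect, is the computation in the ``only if'' step: faithfully translating the conditions $\iota(P)\subseteq N$ and $N_i\subseteq N_{i+1}$ into the coordinate picture, pinning down exactly which root subgroups of $\mathrm{Aut}(R)$ can destroy the $B$-stability of $\mathcal X$, and checking that the one arising from a crossing pair is forced into \emph{every} Borel of the parabolic $\mathrm{Aut}(R)$, so that the obstruction does not depend on the choice of Borel. Once this is settled — and given the remark relating oriented paths of $\Gamma_Q$ to crossing pairs — both implications become essentially formal.
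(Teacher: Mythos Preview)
The present paper does not contain a proof of Theorem~\ref{Thm:SchubertQuivGrass}: the result is stated and attributed to \cite{CFR4}, so there is no argument here against which to compare your proposal.

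For what it is worth, your outline is correct and is the natural argument. The reformulation of the catenoid condition as ``no crossing pair $a<a'$, $b>b'$ among the indecomposable summands $U_{a,b}$ of $M$'' is exactly right (along an oriented path of $\Gamma_Q$ both endpoints of $U_{i,j}$ move monotonically), and the ``if'' direction follows at once: ordering the summands so that both endpoints are weakly increasing makes $R_\bullet$ and $\iota(P)_\bullet$ simultaneously into flags of initial coordinate segments for the same basis of $R_n$, and the corresponding upper-triangular Borel stabilises $\mathcal{X}$ for every $\mathbf{e}$.

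In the ``only if'' direction your key observation is the correct one: the one-parameter unipotent built from the inclusion $P_{a'}\hookrightarrow P_a$ lies in the unipotent radical of the parabolic $\mathrm{Aut}(R)$ because $a<a'$ \emph{strictly}, and the unipotent radical of a parabolic is contained in every Borel of that parabolic; so the obstruction is independent of the choice of $B$. Your verification that this unipotent moves $\pi^{-1}(U_{a',b'})$ out of $\mathcal{X}$ is also right --- the point is that at vertex $b'{+}1$ the basis vector of the $P_a$-summand lies outside $\iota(P)_{b'+1}$ (since $b>b'$) and outside $N_{b'+1}$, while the basis vector of the $P_{a'}$-summand lies in $\iota(P)_{b'+1}\subseteq N_{b'+1}$.

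One remark on the quantifier: as written you exhibit a single dimension vector $\mathbf{e}=\mathbf{dim}\,U_{a',b'}$ for which $B$-stability fails. This proves the theorem in the form ``$M$ is a catenoid $\Leftrightarrow$ $\Gr_\mathbf{e}(M)$ is Schubert for every $\mathbf{e}$'', which is the intended reading (the condition on the right-hand side is visibly a condition on $M$ alone). The literal ``for each fixed $\mathbf{e}$'' reading fails for trivial reasons --- e.g.\ $\mathbf{e}=\mathbf{dim}\,M$ always gives a $B$-fixed point --- so you have proved what is meant.
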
 
Theorem~\ref{Thm:SchubertQuivGrass} has the following interesting corollary. 
\begin{corollary}\label{Cor:CL}
Degenerate flag varieties are Schubert varieties. 
\end{corollary}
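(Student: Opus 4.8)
\emph{Proof proposal.} The plan is to deduce the corollary from Theorem~\ref{Thm:SchubertQuivGrass} by checking that the $Q$--representation producing the degenerate flag variety is a catenoid in the sense of Definition~\ref{Def:Catenoid}. Recall from Section~\ref{Sec:DegFlagVar} that, for the equioriented quiver $Q$ of type $A_n$, one has $\mathcal{F}l_{n+1}^a\simeq \Gr_{\mathbf{dim}\,A}(A\oplus DA)$ with $A=\bigoplus_{i=1}^n P_i$ and $DA=\bigoplus_{k=1}^n I_k$. Since a Schubert quiver Grassmannian has all of its irreducible components equal to Schubert varieties, and $\mathcal{F}l_{n+1}^a$ is irreducible (Section~\ref{Sec:DegFlagVar}), it suffices to show that $M:=A\oplus DA$ is a catenoid.

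First I would translate everything into the combinatorics of $\Gamma_Q$ recalled in Section~\ref{Sec:TypeA}: the indecomposables are the interval modules $U_{i,j}$ ($1\le i\le j\le n$), with $P_i=U_{i,n}$ and $I_k=U_{1,k}$. Hence the set of indecomposable summands of $M$ is $\{U_{i,n}\mid 1\le i\le n\}\cup\{U_{1,k}\mid 1\le k\le n\}$, the two families meeting exactly in the projective--injective module $U_{1,n}=P_1=I_n$. The key step is the observation that these are precisely the vertices lying on the boundary of the triangular quiver $\Gamma_Q$, namely on the oriented path
$$
U_{n,n}\longrightarrow U_{n-1,n}\longrightarrow\cdots\longrightarrow U_{2,n}\longrightarrow U_{1,n}\longrightarrow U_{1,n-1}\longrightarrow\cdots\longrightarrow U_{1,2}\longrightarrow U_{1,1},
$$
which first climbs the leftmost diagonal of the projectives (the irreducible radical inclusions $P_{i+1}\hookrightarrow P_i$) and then descends the top row of the injectives (the irreducible quotient maps $I_k\twoheadrightarrow I_{k-1}$). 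I would check that all the arrows along this path really do point in the indicated directions --- this is immediate from the knitting algorithm together with $\tau U_{i,j}=U_{i+1,j+1}$, using that $\textrm{Irr}(U_{i,n},U_{i-1,n})$ and $\textrm{Irr}(U_{1,k},U_{1,k-1})$ are one--dimensional and that no arrow short-circuits the corner at $U_{1,n}$. This exhibits the indecomposable summands of $M$ on a single oriented connected path of $\Gamma_Q$, so $M$ is a catenoid.

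Applying Theorem~\ref{Thm:SchubertQuivGrass} then gives that $\mathcal{F}l_{n+1}^a\simeq\Gr_{\mathbf{dim}\,A}(A\oplus DA)$ is a Schubert quiver Grassmannian: writing $0\to P\to R\to M\to 0$ for the minimal projective resolution of $M$, its standard embedding into the partial flag variety $GL(R_n)/Aut(R)$ is stable under the Borel subgroup $B\subseteq Aut(R)$. Finally I would invoke the general fact that a $B$--stable closed subvariety of a (partial) flag variety is a finite union of $B$--orbit closures, so that its irreducible components are Schubert varieties; since $\mathcal{F}l_{n+1}^a$ is irreducible, it consists of a single such component and is therefore itself a Schubert variety. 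The only point that is not pure bookkeeping is this last geometric statement about $B$--stable subvarieties, together with making sure that the projective and injective summands of $A\oplus DA$ sweep out the boundary path of $\Gamma_Q$ without gaps; both are straightforward once the Auslander--Reiten picture is set up, so I anticipate no serious obstacle.
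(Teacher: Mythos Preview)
Your proposal is correct and follows exactly the same approach as the paper's proof: verify that $A\oplus DA$ is a catenoid, apply Theorem~\ref{Thm:SchubertQuivGrass}, and then use irreducibility of $\mathcal{F}l_{n+1}^a$ to pass from ``irreducible components are Schubert varieties'' to ``is a Schubert variety''. The only difference is that you spell out in detail why $A\oplus DA$ is a catenoid (exhibiting the boundary path of projectives and injectives in $\Gamma_Q$), whereas the paper simply asserts this without justification.
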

\begin{proof}
The complete degenerate flag variety is $\Gr_{\mathbf{dim}\,A}(A\oplus DA)$ and the representation $M=A\oplus DA$ is a catenoid. By Theorem~\ref{Thm:SchubertQuivGrass} its irreducible components are hence Schubert varieties. Since the complete degenerate flag variety is irreducible the result follows. 
\end{proof}
Corollary~\ref{Cor:CL}  was first proved in collaboration with Martina Lanini \cite{CL} and it was the starting point for the study of Schubert quiver Grassmannians.  It holds for partial degenerate flag varieties, and  for symplectic degenerate flag varieties. For a characteristic-free approach see \cite{CLL}. 

\subsection{Linear degeneration of the complete flag variety}\label{Sec:LinDeg}
Let $Q$ be the equioriented quiver of type $A_n$ and let $A=KQ$ be its path algebra. The dimension vector $\mathbf{d}=(n+1,n+1,\cdots, n+1)$ is of special intereset. Indeed, both $P_1^{n+1}$ and $A\oplus DA$ have dimension vector equal to $\mathbf{d}$. The complete flag variety $\mathcal{F}l_{n+1}$ and the degenerate flag variety $\mathcal{F}l_{n+1}^a$ are hence fibers of the universal quiver Grassmannian 
$$
p_\mathbf{e,d}:\Gr_\mathbf{e}^Q(\mathbf{d})\rightarrow R_\mathbf{d}
$$
where $\mathbf{e}=\mathbf{dim}\,A$. Notice that $p_\mathbf{e,d}$ is surjective because the rigid representation $P_1^{n+1}$ belongs to its image. The family $p_\mathbf{e,d}$ is hence of special interest and it was studied in \cite{CFFFR}. The main result of \cite{CFFFR} describes the locus where $p_\mathbf{e,d}$ is flat with irreducible fibers, denoted with $U_{flat,irr}$ and the locus where $p_\mathbf{e,d}$ is flat, denoted with $\mathcal{U}_{flat}$. Since $\mathcal{F}l_{n+1}=p_\mathbf{e,d}^{-1}(P_1^{n+1})$ and $\mathcal{F}l_{n+1}^a=p_\mathbf{e,d}^{-1}(A\oplus DA)$ are irreducible, both $P_1^{n+1}$ and $A\oplus DA$ lie in $\mathcal{U}_{flat,irr}$. 
Let us consider the rank sequences $\mathbf{r}^0$, $\mathbf{r}^1$ and $\mathbf{r}^2$ given by
$$
\begin{array}{ccc}
r_{i,j}^0=n+1,&r_{i,j}^1=n+1-(j-i),&r_{i,j}^2=n-(j-i).
\end{array}
$$
We put $M^0=P_1^{n+1}$, $M^1=A\oplus DA$ and notice that $\mathbf{r}^0=\mathbf{r}^{M^0}$ and $\mathbf{r}^{M^1}$. The representation $M^2$ such that $\mathbf{r}^2=\mathbf{r}^{M^2}$ is 
$$
M^2=\bigoplus_{i=1}^n P_i\oplus \bigoplus_{j=1}^{n-1} I_j\oplus\bigoplus_{k=1}^nS_k=A\oplus DA/(\textrm{soc}\,DA)\oplus \textrm{soc}\,DA
$$
The following are the coefficient quivers of $M^0$, $M^1$ and $M^2$ respectively, for $n=3$
$$
\begin{array}{ccc}
\xymatrix@R=1pt{
\circ\ar[r]&\circ\ar[r]&\circ\\
\circ\ar[r]&\circ\ar[r]&\circ\\
\circ\ar[r]&\circ\ar[r]&\circ\\
\circ\ar[r]&\circ\ar[r]&\circ}
&
\xymatrix@R=1pt{
\circ\ar[r]&\circ\ar[r]&\circ\\
\circ\ar[r]&\circ&\circ\\
\circ&\circ\ar[r]&\circ\\
\circ\ar[r]&\circ\ar[r]&\circ}
&
\xymatrix@R=1pt{
\circ\ar[r]&\circ&\circ\\
\circ&\circ&\circ\\
\circ&\circ\ar[r]&\circ\\
\circ\ar[r]&\circ\ar[r]&\circ}
\\&&\\
M^0&M^1&M^2
\end{array}
$$
The following theorem describes the loci $\mathcal{U}_{\textrm{flat,Irr}}$ and $\mathcal{U}_{\textrm{ flat}}$ defined above. 
\begin{theorem}\label{Thm:CFFFR}\cite{CFFFR}
\begin{enumerate}
\item $\mathcal{U}_{\textrm{flat,Irr}}=\{M\in R_\mathbf{d}|\, M\leq_{deg} M^1\}=\{M\in R_\mathbf{d}|\, \mathbf{r}^M\geq\mathbf{r}^1\}$.
\item $\mathcal{U}_{\textrm{flat}}=\{M\in R_\mathbf{d}|\, M\leq_{deg} M^2\}=\{M\in R_\mathbf{d}|\, \mathbf{r}^M\geq\mathbf{r}^2\}$.
\end{enumerate}
\end{theorem}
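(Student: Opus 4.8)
The plan is to first translate the two conditions on $p_{\mathbf e,\mathbf d}$ into intrinsic statements about the fibers, and then to compare the resulting loci with the orbit--closure conditions on the right. Since the total space $\Gr_\mathbf{e}^Q(\mathbf d)$ is smooth (hence Cohen--Macaulay), $R_\mathbf d$ is a vector space (hence regular), and $\dim\Gr_\mathbf{e}^Q(\mathbf d)-\dim R_\mathbf d=\langle\mathbf e,\mathbf d-\mathbf e\rangle$, the local criterion for flatness (``miracle flatness'') shows that $p_{\mathbf e,\mathbf d}$ is flat over $M$ if and only if $\dim\Gr_\mathbf{e}(M)=\langle\mathbf e,\mathbf d-\mathbf e\rangle$; by the universal lower bound \eqref{Eq:IneqDimQuivGrass} this is equivalent to $\Gr_\mathbf{e}(M)$ being pure of dimension $\langle\mathbf e,\mathbf d-\mathbf e\rangle$. (Note that $p_{\mathbf e,\mathbf d}$ is surjective, the rigid point $P_1^{n+1}$ being in its closed image, so every $\Gr_\mathbf{e}(M)$ is nonempty.) Thus $\mathcal U_{\textrm{flat}}=\{M\mid\dim\Gr_\mathbf{e}(M)=\langle\mathbf e,\mathbf d-\mathbf e\rangle\}$ is open, by upper semicontinuity of fiber dimension for the proper map $p_{\mathbf e,\mathbf d}$, and $\mathcal U_{\textrm{flat,Irr}}$ is the further locus inside it where $\Gr_\mathbf{e}(M)$ is irreducible, which is again open since there $p_{\mathbf e,\mathbf d}$ is proper and flat; both are $G_\mathbf d$--invariant. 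On the other side, the sets $\{M\mid\mathbf r^M\ge\mathbf r^k\}$ are open and $G_\mathbf d$--invariant (the conditions $r^M_{ij}\ge r^k_{ij}$ are open rank conditions) and, by \cite{AdF1}, equal $\{M\Leq M^k\}$; since $\mathbf r^1\ge\mathbf r^2$ one has $\{M\Leq M^1\}\subseteq\{M\Leq M^2\}$, matching $\mathcal U_{\textrm{flat,Irr}}\subseteq\mathcal U_{\textrm{flat}}$.

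The inclusions ``$\{M\Leq M^k\}\subseteq(\text{the corresponding locus})$'' follow by propagation: if $\mathcal U$ is open and $G_\mathbf d$--invariant and $M_0\in\mathcal U$, then every $M$ with $M\Leq M_0$ lies in $\mathcal U$, because $\overline{G_\mathbf d\cdot M}$ is irreducible with dense orbit $G_\mathbf d\cdot M$, and $\mathcal U\cap\overline{G_\mathbf d\cdot M}$ is a nonempty (it contains $M_0$) open, hence dense, subset that must therefore meet $G_\mathbf d\cdot M$. So it suffices to check $M^1\in\mathcal U_{\textrm{flat,Irr}}$ and $M^2\in\mathcal U_{\textrm{flat}}$. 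The first is Feigin's theorem: $\Gr_{\mathbf{dim}\,A}(A\oplus DA)=\mathcal{F}l_{n+1}^a$ is irreducible of dimension $\frac{n(n+1)}{2}=\langle\mathbf e,\mathbf d-\mathbf e\rangle$ (Section~\ref{Sec:DegFlagVar}). For the second I would use the finite stratification $\Gr_\mathbf{e}(M^2)=\coprod_{[N]}\mathcal S_{[N]}$ with $\dim\mathcal S_{[N]}=[N,M^2]-[N,N]$: combined with \eqref{Eq:IneqDimQuivGrass} it is enough to prove $[N,M^2]-[N,N]\le\langle\mathbf e,\mathbf d-\mathbf e\rangle$ for every subrepresentation type $N$ of dimension vector $\mathbf e$, which one checks by expanding $M^2$ and $N$ into interval summands $U_{ij}$ and using the Hom formula ($[U_{ij},U_{k\ell}]=1$ iff $k\le i\le\ell\le j$).

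For the reverse inclusions I argue by contraposition. If $\mathbf r^M\not\ge\mathbf r^2$, there is a pair $i<j$ with $r^M_{ij}\le n-1-(j-i)$; reading this as a lower bound on the multiplicities of certain interval summands of $M$, one constructs a subrepresentation $N\subseteq M$ with $\mathbf{dim}\,N=\mathbf e$ whose stratum has $[N,M]-[N,N]>\langle\mathbf e,\mathbf d-\mathbf e\rangle$, so $p_{\mathbf e,\mathbf d}$ is not flat over $M$ and hence $M\notin\mathcal U_{\textrm{flat}}\supseteq\mathcal U_{\textrm{flat,Irr}}$; this handles both statements in the case $\mathbf r^M\not\ge\mathbf r^2$. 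The remaining case for (1) is $\mathbf r^M\ge\mathbf r^2$ but $\mathbf r^M\not\ge\mathbf r^1$, which forces $r^M_{ij}=n-(j-i)=r^2_{ij}$ for some $i<j$; then $\Gr_\mathbf{e}(M)$ is flat (pure of the expected dimension), but one exhibits two distinct generic subrepresentation types $N_1\ne N_2$ of dimension $\mathbf e$ with $[N_t,M]-[N_t,N_t]=\langle\mathbf e,\mathbf d-\mathbf e\rangle$, so there are at least two top-dimensional strata, $\Gr_\mathbf{e}(M)$ is reducible, and $M\notin\mathcal U_{\textrm{flat,Irr}}$. It is convenient here to pass to the linear-algebra model: writing the degeneration $M$ of $P_1^{n+1}$ as maps $A_i\colon K^{n+1}\to K^{n+1}$ with $\mathrm{rk}(A_{j-1}\cdots A_i)=r^M_{ij}$, one has $\Gr_\mathbf{e}(M)\cong\{(U_1,\dots,U_n)\in\prod_i\Gr_i(K^{n+1})\mid A_i(U_i)\subseteq U_{i+1}\}$, and the required subrepresentations are explicit families of flags adapted to the kernels and images of the $A_i$.

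The soft part of the argument — miracle flatness, the openness statements, the propagation lemma, and the reduction of the ``$\supseteq$'' half to Feigin's theorem — is routine. The real obstacle is the combinatorial core of the last two paragraphs: turning the failure ``$\mathbf r^M$ drops below $\mathbf r^2$'' (resp.\ ``drops below $\mathbf r^1$ while staying $\ge\mathbf r^2$'') into the existence of a stratum of excess dimension (resp.\ of two top-dimensional strata), and likewise verifying $M^2\in\mathcal U_{\textrm{flat}}$. These require producing the right subrepresentation types and evaluating $[N,M]-[N,N]$ exactly, and doing so uniformly over all degenerations $M$ of $P_1^{n+1}$ is where the bulk of the work lies.
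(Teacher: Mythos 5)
The paper offers no proof of this theorem; it is quoted from \cite{CFFFR}, and your skeleton — miracle flatness reducing flatness over $M$ to the fiber-dimension condition $\dim\Gr_\mathbf{e}(M)=\langle\mathbf{e},\mathbf{d}-\mathbf{e}\rangle$ (using \eqref{Eq:IneqDimQuivGrass} for purity), the Abeasis--Del Fra identification $\{M\Leq M^k\}=\{\mathbf{r}^M\geq\mathbf{r}^k\}$, and the reduction of everything to dimension estimates for the strata $\mathcal{S}_{[N]}$ with $\dim\mathcal{S}_{[N]}=[N,M]-[N,N]$ — is indeed the strategy of that paper. But there are two problems. The first is a concrete error in what you call the soft part: the locus of points with irreducible fiber is \emph{not} open for a proper flat morphism. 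Take the family of conics $x^2=ty^2$ in $\PP^2\times\mathbb{A}^1$: it is proper and flat, the fibers over $t\neq0$ are two distinct lines (reducible), and the fiber over $t=0$ is an irreducible double line, so the irreducible-fiber locus is closed and not open. What is open is the geometrically \emph{integral} locus, and invoking that would require first proving the fibers over $\mathcal{U}_{\textrm{flat}}$ are reduced, which you have not done. Since your entire ``$\supseteq$'' argument for part (1) runs through the propagation lemma applied to $\mathcal{U}_{\textrm{flat,Irr}}$, this step is load-bearing. The repair (and what \cite{CFFFR} actually does) is to prove irreducibility over $\{\mathbf{r}^M\geq\mathbf{r}^1\}$ directly from the stratification: show that exactly one stratum attains the maximal dimension $\langle\mathbf{e},\mathbf{d}-\mathbf{e}\rangle$; combined with \eqref{Eq:IneqDimQuivGrass}, every irreducible component is the closure of such a stratum, so the fiber is irreducible.

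The second problem is that the combinatorial core — checking $[N,M]-[N,N]\leq\langle\mathbf{e},\mathbf{d}-\mathbf{e}\rangle$ for all subrepresentation types $N$ whenever $\mathbf{r}^M\geq\mathbf{r}^2$, producing an excess-dimensional stratum whenever $\mathbf{r}^M\not\geq\mathbf{r}^2$, and producing two (or exhibiting a unique) maximal strata according to whether $\mathbf{r}^M\not\geq\mathbf{r}^1$ or $\mathbf{r}^M\geq\mathbf{r}^1$ — is described but not carried out. You are candid that this is where the bulk of the work lies, but these uniform estimates over all rank sequences \emph{are} the theorem; everything else in your write-up is framing. Note also that your reduction of $M^2\in\mathcal{U}_{\textrm{flat}}$ to the single representation $M^2$ does not suffice for part (2) unless the contrapositive construction for arbitrary $M$ with $\mathbf{r}^M\not\geq\mathbf{r}^2$ is supplied in full, so the hard direction cannot be localized at $M^2$ alone. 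As it stands, the proposal is a correct plan that coincides with the published one, not a proof.
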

It is worth noting that the irreducible flat locus $\mathcal{U}_{\textrm{flat,Irr}}$ coincides with the normal flat locus, i.e. the locus consisting of points $M$ whose fiber $p_\mathbf{e,d}^{-1}(M)$ is normal and of the same dimension as the complete flag variety $\mathcal{F}l_{n+1}$. 

In view of Theorem~\ref{Thm:CFFFR} we call $\Gr_{\mathbf{dim}\,A}(M^2)$ the most-flat linear degeneration of the complete flag variety, or, in short, mf-linear degeneration of the flag variety. It turns out that the mf-linear degeneration has an interesting geometric structure: it is an equi-dimensional, locally complete intersection variety whose irreducible components are naturally parametrized by non-crossing arc diagrams on $n$ vertices and hence they are in number of $C_n=\frac{1}{n+1}$$\binom{2n}{n}$, the n-th Catalan number.  

The flat irreducible locus contains two Schubert varieties, namely the complete flag variety $\mathcal{F}l_{n+1}$ and the degenerate flag variety $\mathcal{F}l^a_{n+1}$. We call $\mathcal{U}_{PBW}\subset \mathcal{U}_{irr,flat}$ the locus of points whose fiber is a Schubert quiver Grassmannian. In \cite[Section~5]{CFFFR} those Schubert quiver Grassmannian are shown to be PBW-degenerations of the complete flag variety, and hence the name. Moreover a complete description of the realization as a Schubert variety is provided.

\section{Quiver Grassmannians of Dynkin type}
In section~\ref{Sec:TypeA} we saw that even quiver Grassmannians of type~A can have a complicated geometric structure. We hence cannot expect to find general results concerning quiver Grassmannians associated with an arbitrary Dynkin quiver. What we can do is to restrict our attention to particular quiver Grassmannians. We have in mind the complete degenerate flag variety $\mathcal{F}l_{n+1}\simeq \Gr_{\mathbf{dim}\,A}(P_1^{n+1})$,  and the degenerate flag variety $\mathcal{F}l^a_{n+1}\simeq \Gr_{\mathbf{dim}\,A}(A\oplus DA)$, where $A$ is the path algebra of the equioriented quiver of type $A_n$. Those varieties share many nice properties. It is hence natural to consider quiver Grassmannians of the form $\Gr_{\mathbf{dim}\,P}(P\oplus I)$ where $P$ is a projective and $I$ is an injective representation of an arbitrary Dynkin quiver. We call such varieties well-behaved quiver Grassmannians \cite{CFR}. We prove the following result. 
\begin{theorem}\label{Thm:WellBehaved}\cite{CFR}
Let $Q$ be a Dynkin quiver. Let $P$ and $I$ be a projective and an injective $Q$-representation, respectively. Let $Z=\Gr_{\mathbf{dim}\,P}(P\oplus I)$. Then $Z$ is a reduced, irreducible and rational locally complete intersection scheme of minimal dimension $\langle\mathbf{dim}\,P,\mathbf{dim}\,I\rangle$. Moreover $Z$ has normal singularities and it is acted upon by an algebraic group $G\subset \textrm{Aut}(P\oplus I)$ with finitely many orbits.
\end{theorem}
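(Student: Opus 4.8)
Write $M := P \oplus I$, $\mathbf{e}:=\mathbf{dim}\,P$ and $\mathbf{d}:=\mathbf{dim}\,M$, so that $\mathbf{d}-\mathbf{e}=\mathbf{dim}\,I\in\ZZ^{Q_0}_{\geq 0}$ and the expected dimension $\langle\mathbf{dim}\,P,\mathbf{dim}\,I\rangle$ equals $\langle\mathbf{e},\mathbf{d}-\mathbf{e}\rangle$; note $Z$ is always non-empty since $P\oplus 0$ is a subrepresentation. The plan is to begin from a distinguished affine open subset. For $f\in\Hom_Q(P,I)$ the graph $\Gamma_f:=\{(p,f(p)):p\in P\}$ is a subrepresentation of $M$ of dimension vector $\mathbf{e}$ isomorphic to $P$; conversely, if $N\subseteq M$ has dimension vector $\mathbf{e}$ and $N\cap(0\oplus I)=0$, then $N\to P$ is an isomorphism and $N=\Gamma_f$ for the unique $f$ it determines. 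Thus $f\mapsto\Gamma_f$ identifies $\Hom_Q(P,I)$ with the open locus $Z^\circ:=\{N\in Z: N\cap(0\oplus I)=0\}$. Since $P$ is projective, $\Ext^1_Q(P,I)=0$, so by \eqref{Eq:EulForm} $\dim\Hom_Q(P,I)=\langle\mathbf{dim}\,P,\mathbf{dim}\,I\rangle$; hence $Z^\circ$ is an affine space of that dimension, $\overline{Z^\circ}$ is an irreducible component of the expected dimension, and by Theorem~\ref{Thm:TangentSpace} every point $[(P,\iota)]\in Z^\circ$ has tangent space $\Hom_Q(P,M/P)\cong\Hom_Q(P,I)$, so $Z$ is smooth along $Z^\circ$ (so rational, once $Z^\circ$ is known to be dense).

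The core of the argument is to promote $Z^\circ$ to an open dense orbit. Let $G\subseteq\operatorname{Aut}_Q(M)$ be the ``mirror-parabolic'' subgroup of automorphisms $\begin{pmatrix} a & 0 \\ c & b\end{pmatrix}$ with $a\in\operatorname{Aut}_Q(P)$, $b\in\operatorname{Aut}_Q(I)$, $c\in\Hom_Q(P,I)$; its unipotent radical $\Hom_Q(P,I)$ acts simply transitively on $Z^\circ$ (carrying $P\oplus 0$ to $\Gamma_c$), so $Z^\circ$ is a single $G$-orbit. Two claims remain. First, $Z^\circ$ is dense, so $Z$ is irreducible of dimension $\langle\mathbf{dim}\,P,\mathbf{dim}\,I\rangle$: here I would use the iso-stratification $Z=\coprod_{[N]}\mathcal{S}_{[N]}$, which is finite since $Q$ is Dynkin (so $M$ is preprojective), with $\dim\mathcal{S}_{[N]}=[N,M]-[N,N]$; because $I$ is injective $[N,I]^1=0$, hence $[N,I]=\langle\mathbf{dim}\,P,\mathbf{dim}\,I\rangle$ and $\dim\mathcal{S}_{[N]}=\langle\mathbf{dim}\,P,\mathbf{dim}\,I\rangle+\big([N,P]-[N,N]\big)$. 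It then suffices to show that $[N,P]\leq[N,N]$ for every $N$ admitting an embedding into $M$ with $\mathbf{dim}\,N=\mathbf{e}$, with equality only if $N\cong P$ — and I expect this to be the main obstacle. I would derive it from degeneration theory: $P$ should be the $\Leq$-minimal module embedding into $M$ with dimension vector $\mathbf{e}$ (so $P\Leq N$ for all such $N$, and then $[N,P]\leq[N,N]$ by \eqref{Eq:DegZwara}, with Theorem~\ref{Thm:Bongartz} used to control which modules embed), while equality evaluated at $X=N$ together with the Euler form forces $[N,N]^1=0$, so $N$ is rigid and therefore $N\cong P$ since $P$, being projective and hence rigid, is the unique generic representation of dimension vector $\mathbf{e}$. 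Granting this, $\mathcal{S}_{[P]}=Z^\circ$ is the unique stratum of maximal dimension, so $Z=\overline{Z^\circ}$. Second, $G$ has finitely many orbits on $Z$: to $N\in Z$ one attaches the $\operatorname{Aut}_Q(P)$-orbit of $\mathrm{pr}_P(N)\subseteq P$ and the $\operatorname{Aut}_Q(I)$-orbit of $N\cap(0\oplus I)\subseteq I$ (both $G$-invariant), and checks that the remaining ``gluing'' moduli, governed by the $\Hom_Q(P,I)$-action, fall into finitely many orbits; since $Q$ is Dynkin the relevant indecomposable summands, and hence this combinatorial problem, are finite.

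Once irreducibility and $\dim Z=\langle\mathbf{e},\mathbf{d}-\mathbf{e}\rangle$ are in place the remaining properties follow formally. Using the Caldero--Reineke presentation \eqref{Eq:QuotQuivGrass}, $Z\cong\Hom^0(\mathbf{e},M)/G_\mathbf{e}$, where $\Hom^0(\mathbf{e},M)=\{(L,f):\Phi^M_L(f)=0\}$ is cut out inside the smooth variety $R_\mathbf{e}\times\Hom^0(\mathbf{e},\mathbf{d})$ by $\dim\Hom(\mathbf{e},\mathbf{d}[1])$ equations; a short computation with the Euler form and \eqref{Eq:DimRd}, using $\dim Z=\langle\mathbf{e},\mathbf{d}-\mathbf{e}\rangle$, shows that the codimension of $\Hom^0(\mathbf{e},M)$ equals exactly this number of equations, so $\Hom^0(\mathbf{e},M)$ is a complete intersection — Cohen--Macaulay and locally a complete intersection — and dividing by the free $G_\mathbf{e}$-action transports these properties to $Z$. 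Being Cohen--Macaulay and generically reduced (smooth along $Z^\circ$ by Theorem~\ref{Thm:TangentSpace}), $Z$ is reduced. For normality I would invoke Serre's criterion: $S_2$ is the Cohen--Macaulay property, and for $R_1$ one examines the codimension-one strata, i.e.\ the $\mathcal{S}_{[N]}$ with $[N,N]=[N,P]+1$, and shows $Z$ is smooth along each — equivalently $\Ext^1_Q(N,M/N)=0$ there — using the surjection $\Ext^1_Q(N,M)=\Ext^1_Q(N,P)\twoheadrightarrow\Ext^1_Q(N,M/N)$ (as in Proposition~\ref{Prop:RigidQGSmooth}) together with a dimension count forcing $\Ext^1_Q(N,P)=0$ on such strata. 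Rationality of $Z$ is witnessed by the dense cell $Z^\circ\cong\mathbb{A}^{\langle\mathbf{dim}\,P,\mathbf{dim}\,I\rangle}$, and finiteness of $G$-orbits is the last assertion. The genuine difficulty is concentrated in the homological inequality $[N,P]\leq[N,N]$ underlying irreducibility (and in its sharpening needed for the $R_1$ condition); once that is secured, everything else is bookkeeping.
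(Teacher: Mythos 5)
Your overall route is the one the paper intends: the text does not reprove this theorem (it cites \cite{CFR}), but the strategy it points to — stratify by iso-class as in Section~\ref{Sec:Stratification}, compute $\dim\mathcal{S}_{[N]}=[N,M]-[N,N]=\langle\mathbf{dim}\,P,\mathbf{dim}\,I\rangle+[N,P]-[N,N]$ using injectivity of $I$, reduce irreducibility and the dimension to the inequality $[N,P]\leq[N,N]$ with equality only for $N\cong P$, and get the lci/reduced/rational statements from the presentation \eqref{Eq:QuotQuivGrass} by a codimension count — is exactly what you propose, and it is spelled out in the exercises. The inequality itself is even easier than you make it: since $Q$ is Dynkin and $P$ is rigid, the orbit of $P$ is dense in $R_{\mathbf{e}}$, so $P\Leq N$ for \emph{every} $N\in R_\mathbf{e}$ (no need to identify which modules embed into $M$), and \eqref{Eq:DegZwara} with $X=N$ gives $[N,P]\leq[N,N]$.

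However, your treatment of the equality case — which you correctly single out as the crux — does not work as written. From $[N,P]=[N,N]$ the Euler form \eqref{Eq:EulForm} yields only $[N,P]^1=[N,N]^1$ (both differences $[N,-]-[N,-]^1$ equal $\langle\mathbf{e},\mathbf{e}\rangle$), not $[N,N]^1=0$; projectivity of $P$ controls $\Ext^1(P,-)$, not $\Ext^1(-,P)$, so $[N,P]^1$ has no reason to vanish. The correct step is precisely Theorem~\ref{Thm:Bongartz}(1), which you mention but never actually deploy: apply it to the degeneration $P\Leq N$ with $U=N$, using the hypothesis $[U,P]=[U,N]$; since $N$ embeds into $N$, it embeds into $P$, and equality of dimension vectors forces $N\cong P$. (This is exactly the exercise ``Using Bongartz's theorem, show that the only generic iso-stratum is $\mathcal{S}_{[P]}$.'') Two further soft spots: your $R_1$ check for normality needs $\Ext^1(N,P)=0$ on codimension-one strata, but the dimension count there gives $[N,P]^1=[N,N]^1-1$, which vanishes only if $[N,N]^1=1$, and nothing you say excludes $[N,N]^1\geq 2$; and the finiteness of $G$-orbits is asserted rather than proved. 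These last two points are genuinely the delicate parts of \cite{CFR} and require more than the invariants you list.
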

We can extend a little bit the class of well-behaved quiver Grassmannians by considering varieties of the form $Z=\Gr_{\mathbf{dim}\,X}(X\oplus Y)$ where $X$ and $Y$ are rigid and such that $[X,Y]^1=0$. In this case the variety $Z$ has the same properties stated in theorem~\ref{Thm:WellBehaved} but not the group action and the normality. Normality does not hold for such quiver Grassmannians, in general, since they can have singularities in codimension~1 (see example~4 in section~\ref{Ex4}).   

\section{Cell decomposition and property~(S)}\label{Sec:CellDec}
A finite partition $(X_i)$ of a complex algebraic variety $X$ is said to be an \emph{$\alpha$--partition} if 
\begin{equation}\label{Eq:DefAlphaPart}
X_1\amalg\cdots\amalg X_i\textrm{ is closed in }X\textrm{ for every }i.
\end{equation}
Clearly, every piece of an $\alpha$--partition is locally closed.  Property \eqref{Eq:DefAlphaPart} can be reformulated by 
\begin{equation}\label{Eq:DefAlphaPart2}
\overline{X}_i\subseteq X_1\amalg\cdots\amalg X_{i-1}
\end{equation}
for every $i$. 
A \emph{cellular decomposition} of $X$ is an $\alpha$--partition whose parts $X_i$ are (complex) affine spaces. By \eqref{Eq:CellDedGrass} and \eqref{Eq:CellDecGrassClosure} we see that the Grassmannian admits an $\alpha$-partition into affine spaces; more precisely the standard cells of the Grassmannian form an $\alpha$-partition with the stronger property that in \eqref{Eq:DefAlphaPart2} the equality holds. The following is an example of a variety admitting a partition into affine spaces which do not admit a cellular decomposition.
\begin{example}\label{Ex:NotAlphaPart}
Let $X=\{[x:y:z]\in\PP^2|\, xyz=0\}$ be the union of three $\PP^1$'s crossing in the three distinct points $[1:0:0]$, $[0:1:0]$ and $[0:0:1]$. We can represent $X$ as a triangle: 
$$
\xymatrix{
&\bullet\ar@{-}^{[0:1:z]}[dr]\ar@{}|(-.18){[0:1:0]}[d]&\\
\bullet\ar@{-}^{[1:y:0]}[ur]&&\bullet\ar@{-}^{[x:0:1]}[ll] \ar@{}^(1.2){[1:0:0]}[ll]\ar@{}^(-0.2){[0:0:1]}[ll]
}
$$
Then $X=\mathcal{C}_1\cup \mathcal{C}_2\cup \mathcal{C}_3$ is the disjoint  union of three affine lines $\mathbb{A}^1$ given by $\mathcal{C}_1=\{[1:y:0]\}$, $\mathcal{C}_2=\{[1:0:z]\}$, $\mathcal{C}_3=\{[0:1:z]\}$ which do not form an $\alpha$-partition. 
\end{example}

The existence of a  cellular decomposition for $X$ is rare but when happens it implies wonderful homological properties:  we denote by $H_i(X)$ the i--th space of the Borel--Moore homology of $X$ (see \cite{CG}). Following \cite[Sec.~1.7]{DLP} we say that  an algebraic variety $X$ has \emph{property~(\textbf{S})}  if:
\begin{itemize}
\item[(\textbf{S}1)] $H_i(X)$ is zero if i is odd and it has no torsion if i is even; 
\item[(\textbf{S}2)] the cycle map $\varphi_i: A_i(X)\rightarrow H_{2i}(X)$ is an isomorphism for all i.  
\end{itemize}
(Here $A_k(X)$ denotes the Chow group generated by $K$--dimensional irreducible subvarieties modulo rational equivalences (see \cite[Sec.~1.3]{Fu})). It is easy to prove (see e.g. \cite[Section~1.10]{DLP}) that
$$
\xymatrix{
\textrm{Cell decompositon}\ar@{=>}[r]&\textrm{Property (S)}
}
$$
but the opposite is not true. A counter-example for the reverse implication was communicated to me by Antonio Rapagnetta: it is the first example of an irrational surface with trivial $H^1$ \cite{Barlow}. Those surfaces have property (S) but they cannot admit a cellular decomposition, since cellular decomposition implies rationality. The variety $X$ of example~\ref{Ex:NotAlphaPart} has non-trivial $H_1$ and hence it cannot admit a cellular decomposition. 

If $X$ is a smooth complex algebraic variety, we can formulate property (S) equivalently in terms of singular cohomology as: 1) the odd cohomology groups $H^{2i+1}(X)$ vanish and 2) the cycle map $A^i(X)\rightarrow H^{2i}(X)$ is an isomorphism. 

By  \cite[Lemma~1.9]{DLP}, if $f:X\rightarrow Y$ is a locally-trivial affine bundle, and $Y$ has property (S) (resp.  admits a cellular decomposition) then $X$ has property (S) (resp. admits a cellular decomposition) too. If $X$ admits an $\alpha$-partition into pieces having property (S) (resp. admitting a cellular decomposition) then $X$ has property (S) (resp. admits a cellular decomposition). 

In \cite{DLP} it is shown that Springer fibers for classical groups admit a cellular decomposition and the Springer fibers for exceptional groups have property (S). It is conjectured that cell decomposition exists also for the exceptional groups.  

We say that a quiver representation $M$ has property $(C)$ (resp. (S)) if every quiver Grassmannian $\Gr_\mathbf{e}(M)$ associated with $M$ admits a cellular decomposition (resp. has property (S)). In \cite{CEFR}  the following result is proved.
\begin{theorem}\cite{CEFR}\label{Thm:CEFR}
Let $Q$ be a connected quiver and $M$ a $Q$--representation. 
\begin{enumerate}
\item If $Q$ is Dynkin, then $M$ has property (C).
\item If $Q$ is affine and $M$ has indecomposable or rigid regular part, then $M$ has property (C). 
\item If $Q$ is arbitrary and $M$ is rigid, then $M$ has property (S). 
\end{enumerate}
\end{theorem}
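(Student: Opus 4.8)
\emph{Proof proposal.} The plan is to treat all three statements by one mechanism: a $\CC^*$--action on $\Gr_\mathbf{e}(M)$ obtained by rescaling the indecomposable summands of $M$, a Białynicki--Birula type decomposition of the variety into attracting cells, and an induction on $\mathbf{dim}\,M$, the whole argument being packaged through the formal properties of (S) and of cellular decompositions recalled in Section~\ref{Sec:CellDec}. First I would decompose $M=\bigoplus_{k=1}^s M(k)$ into indecomposables and order the summands so that $[M(i),M(j)]^1=0$ whenever $i<j$. This ordering is possible because the preprojective and preinjective components $\mathcal P_Q$, $\mathcal I_Q$ are directed, the $\Ext^1$ between a preprojective (resp.\ regular) summand and a later regular (resp.\ preinjective) summand vanishes by Auslander--Reiten theory, and for the regular summands it is forced by the hypotheses: in case (2) the regular part is rigid or a single tube module, while in case (3) all pairs of summands of the rigid module $M$ are automatically $\Ext^1$--free. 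Letting $T=\CC^*$ act by $\lambda\cdot m=\lambda^{k-1}m$ for $m\in M(k)$, the induced action on $\Gr_\mathbf{e}(M)$ has fixed locus
\[
\Gr_\mathbf{e}(M)^T=\coprod_{\mathbf{f}_1+\cdots+\mathbf{f}_s=\mathbf{e}}\ \prod_{k=1}^s\Gr_{\mathbf{f}_k}(M(k)),
\]
a disjoint union of products of quiver Grassmannians of the summands, which are of strictly smaller dimension as soon as $s\geq 2$; the attracting sets $\mathcal C_L$ of its connected components form an $\alpha$--partition of $\Gr_\mathbf{e}(M)$.

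The technical heart is to show that each $\mathcal C_L$ is a Zariski--locally--trivial affine bundle over the fixed component it retracts to. In the rigid case (3) this is immediate: by Proposition~\ref{Prop:RigidQGSmooth} the variety $\Gr_\mathbf{e}(M)$ is smooth, so the classical Białynicki--Birula theorem applies verbatim. In cases (1)--(2) the variety may be singular (cf.\ Example~4 in Section~\ref{Ex4}), so instead I would work inside the ambient product of ordinary Grassmannians $\Gr_\mathbf{e}(\mathbf{d})=\prod_i\Gr_{e_i}(M_i)$: the $T$--action extends there, its attracting cells are the standard affine cells, and $\mathcal C_L$ is the intersection of $\Gr_\mathbf{e}(M)$ with such a cell. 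Using the ordering $[M(i),M(j)]^1=0$ for $i<j$, an explicit linear--algebra computation --- the coefficient--quiver bookkeeping of Theorem~\ref{Thm:CellDecTypeA}, carried out now without the thinness assumption --- shows that these equations cut out an affine bundle over the corresponding fixed component; in type $A$ the fixed components are points and one recovers Theorem~\ref{Thm:CellDecTypeA} literally.

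With this in hand the induction on $\mathbf{dim}\,M$ closes: for $s\geq 2$ each $\Gr_{\mathbf{f}_k}(M(k))$ is a quiver Grassmannian of a smaller representation that still satisfies the hypotheses of the relevant item (a Dynkin representation in (1); a representation over the same affine quiver whose regular part is a summand of the original one, hence still rigid or a tube module, in (2); a rigid representation in (3)), so by inductive hypothesis it has property (C) (resp.\ (S)); products preserve this (trivially for (C), by a Künneth argument for (S)); and since $\mathcal C_L$ is an affine bundle over such a base, \cite[Lemma~1.9]{DLP} propagates the property through the $\alpha$--partition $\coprod_L\mathcal C_L$ up to $\Gr_\mathbf{e}(M)$. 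It remains to settle the base case $s=1$, where the torus trick is vacuous: for Dynkin quivers this is done by the explicit classification of indecomposables together with a sub--induction on $\mathbf{e}$; for affine quivers one handles the indecomposable regular modules through the combinatorics of tubes and the indecomposable preprojective/preinjective ones by a further reduction; and an indecomposable rigid module over an arbitrary quiver is exceptional, hence can be transported to a projective or an injective by a sequence of reflection functors, which induce isomorphisms of the relevant quiver Grassmannians.

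The main obstacle is the affine--bundle claim in the singular setting of the second paragraph: proving that the attracting sets of the summand--rescaling torus remain affine bundles when $\Gr_\mathbf{e}(M)$ is not smooth. This is exactly the point where the hypotheses of (1) and (2) are indispensable --- the directedness needed to order the summands with $[M(i),M(j)]^1=0$ is automatic for Dynkin quivers but genuinely fails for a non--rigid regular part over an affine quiver, and its absence in full generality is precisely why (3) yields only property (S) and not property (C). A secondary difficulty is the indecomposable base case, which cannot be extracted from the torus argument and requires the structure theory of the module category.
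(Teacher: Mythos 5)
Your reduction of a decomposable $M$ to its indecomposable summands is essentially the paper's own mechanism in disguise: rescaling the summand $M(k)$ by $\lambda^{k-1}$ and taking attracting sets amounts to iterating the split case of the reduction theorem~\ref{Thm:RedThm} for two-block sequences $0\to X\to M\to S\to 0$ with $[S,X]^1=0$, which is exactly how Corollary~\ref{Cor:RedThm} is applied. So that half of the argument is sound, and your worry about the affine-bundle property in the singular setting is settled by Theorem~\ref{Thm:RedThm} rather than by a Bia{\l}ynicki--Birula argument in the ambient product of Grassmannians.

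The genuine gap is the base case of indecomposable $M$, which is where all the content of the theorem sits and about which your torus carries no information. For parts (1) and (2) the actual proof does not run a ``sub-induction on $\mathbf{e}$ using the classification'': it realizes a non-projective indecomposable $Y$ as the middle term of a \emph{non-split} generating extension $\xi\in\Ext^1(S,X)$ with $X,S$ smaller, applies Theorem~\ref{Thm:RedThm} to obtain an affine bundle over the image \eqref{Eq:ImageNonSplitGenerating}, and then must produce a cellular decomposition of that image, i.e.\ of the complement of $\Gr_\mathbf{f}(X_S)\times\Gr_{\mathbf{g}-\mathbf{dim}\,S^X}(S/S^X)$ inside $\Gr_\mathbf{f}(X)\times\Gr_\mathbf{g}(S)$; this step, which requires the control of $X_S$ and $S^X$ given by \eqref{Eq:XsSx} and is the reason for the restrictive hypotheses in (2), is entirely absent from your proposal. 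For part (3) your base case is incorrect: a regular exceptional module over a wild quiver cannot be transported to a projective by reflection functors (reflections preserve regularity), and reflection functors do not induce isomorphisms of quiver Grassmannians in any case. Moreover the Bia{\l}ynicki--Birula decomposition is vacuous for an indecomposable rigid $M$ (the fixed locus is everything), so smoothness via Proposition~\ref{Prop:RigidQGSmooth} gives nothing there; the paper's proof of (3) instead establishes property (S) by decomposing the class of the diagonal in the Chow group using the theorem of Ellingsrud and Str\o mme \cite{ES}, a mechanism of a different nature, and this is precisely why (3) yields only property (S) and not property (C).
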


It is conjectured that every rigid quiver representation has property (C).  For quivers with two vertices this has been proved in \cite{RW}. For quivers of type $\tilde{D}$, theorem~\ref{Thm:CEFR} was partially proved in \cite{LW1} and \cite{LW2}. 
It is not clear to us if the restrictions assumed in part (2) of Theorem~\ref{Thm:CEFR} on the regular part of $M$ for an affine quiver are necessary. At the moment, our proof  only works for those cases. The proof of part $(3)$ is based on a general theorem of Ellingsrund and  Str\o mme \cite{ES} concerning the decomposition of the diagonal in the Chow group. 

\subsection{Decomposition induced by short exact sequences}
In this section we illustrate the idea of the proof of part (1) and part (2) of theorem~\ref{Thm:CEFR}.  Let $Q$ be an acyclic quiver and let 
$$
\eta:\xymatrix{0\ar[r]&M'\ar^\iota[r]&M\ar^\pi[r]&M''\ar[r]&0}
$$
be a short exact sequence in Rep($Q$). This induces the map 
$$
\Psi^\eta: \Gr_\mathbf{e}(M)\rightarrow \coprod_{\mathbf{f+g=e}}\Gr_\mathbf{f}(M')\times\Gr_\mathbf{g}(M''):\; N\mapsto (\iota^{-1}N,\pi(N))
$$ 
between quiver Grassmannians. By taking the preimage $\mathcal{S}_{\mathbf{f,g}}^\eta=(\Psi^{\eta})^{-1}(\Gr_\mathbf{f}(M')\times\Gr_\mathbf{g}(M''))$ of each piece, we get the algebraic map 
$$
\Psi^\eta_{\mathbf{f,g}}: \mathcal{S}_{\mathbf{f,g}}^\eta\rightarrow \Gr_\mathbf{f}(M')\times\Gr_\mathbf{g}(M''):\; N\mapsto (\iota^{-1}N,\pi(N)).
$$ 
The finite partition $
\Gr_\mathbf{e}(M)=\coprod_{\mathbf{f+g=e}}\mathcal{S}_{\mathbf{f,g}}^\eta
$ 
is an $\alpha$-partition (see \cite[Lemma~20]{CEFR}). It is hence natural to investigate the map $\Psi^\eta_{\mathbf{f,g}}$ in order to deduce nice properties of each piece $\mathcal{S}_{\mathbf{f,g}}^\eta$. The first thing to study is its image. It is basically by definition that the image of $\Psi^\eta_{\mathbf{f,g}}$ consists of those pairs $(N',N'')\in \Gr_\mathbf{f}(M')\times\Gr_\mathbf{g}(M'')$ such that in the commutative diagram 
\begin{equation}\label{Eq:CommDiagPsi}
\xymatrix@R=15pt{
\Ext^1(M'',M')\ar@{->>}[r]\ar@{->>}[d]&\Ext^1(M'',M'/N')\ar@{->>}[d]\\
\Ext^1(N'',M')\ar@{->>}[r]&\Ext^1(N'',M'/N')
}
\end{equation}
the element $\eta\in\Ext^1(M'',M')$ is mapped to zero in $\Ext^1(N'',M'/N')$ (see \cite[Lemma~21]{CEFR}). For a general $\eta$ the image of $\Psi^\eta_{\mathbf{f,g}}$ is hence difficult to control. Nevertheless, there are some short exact sequences for which this image is under control. They are called generating.  
\subsection{Generating short exact sequences}
\begin{definition}
An element  $\xi\in\Ext^1(S,X)$ is  \emph{generating} if $\Ext^1(S,X)=\CC\xi$. 
\end{definition}
In other words  $\xi\in\Ext^1(S,X)$ is generating if either $[S,X]^1=0$ and $\xi=0$ or $[S,X]^1=1$ and $\xi\neq 0$.  

If a generating sequence $\xi\in\Ext^1(S,X)$ is split then $\Ext^1(S,X)=0$ and hence the map $\Psi^\xi_{\mathbf{f,g}}$ is surjective by the above description of its image. 

If a generating sequence $\xi\in\Ext^1(S,X)$ is not split then $\Ext^1(S,X)\simeq \CC$ and hence for every $(N_1,N_2)\in\Gr_\mathbf{f}(X)\times \Gr_\mathbf{g}(S)$  the diagram \eqref{Eq:CommDiagPsi} becomes 
\begin{equation}\label{Eq:CommDiagPsiGenerating}
\xymatrix{
\CC\ar@{->>}[r]\ar@{->>}[d]&\Ext^1(S,X/N_1)\ar@{->>}[d]\\
\Ext^1(N_2,X)\ar@{->>}[r]&\Ext^1(N_2,X/N_1)
}
\end{equation}
forcing $[N_2,X]^1\leq 1$, $[S,X/N_1]^1\leq 1$ and $[N_2,X/N_1]^1\leq 1$. A pair $(N_1,N_2)$ is \emph{not} in the image of $\Psi^\xi_{\mathbf{f,g}}$ if and only if $[N_2,X/N_1]^1=1$. By diagram chasing one shows that a pair $(N_1,N_2)$ is \emph{not} in the image of $\Psi^\xi_{\mathbf{f,g}}$ if and only if $[N_2,X]^1=[S,X/N_1]^1=1$. It turns out \cite[Lemma~27]{CEFR} that the following subrepresentations are well--defined
$$
\begin{array}{cc}
X_S:=\textrm{max}\{N\subset X|\, [S,X/N]^1=1\},& S^X:=\textrm{min}\{N\subset S|\,[N,X]^1=1\}.
\end{array}
$$
Let us give a better description of those subrepresentations. The subrepresentation $X_S$ is the maximal subrepresentation of $X$ such that the pushout sequence 
$$
\xymatrix{
\xi:&0\ar[r]&X\ar[r]\ar@{->>}^p[d]&Y\ar[r]\ar@{->>}[d]&S\ar[r]\ar^=[d]&0\\
p_\ast\xi:&0\ar[r]&X/X_S\ar[r]&\overline{Y}\ar[r]&S\ar[r]&0
}
$$
does not split. Dually, the subrepresentation $S^X\subseteq S$ is the minimal subrepresentation such that the pull-back sequence 
$$
\xymatrix{
i^\ast\xi:&0\ar[r]&X\ar[r]\ar^=[d]&\tilde{Y}\ar[r]\ar@{^(->}[d]&S^X\ar[r]\ar@{^(->}^i[d]&0\\
\xi:&0\ar[r]&X\ar[r]&Y\ar[r]&S\ar[r]&0
}
$$
does not split. If $\xi$ is almost split then this description implies that $S^X=S$ and $X_S=0$. We hence say that a generating extension is a \emph{generalized almost split} sequence if $S^X=S$ and $X_S=0$. It is easy to find examples of generalized almost split sequences which are not almost split (see exercise~\ref{Eser:TypeDGenerating}). If $\xi\in \Ext^1(S,X)$ is generating and not split, then, by the Auslander-Reiten formula, $[X,\tau S]=[\tau^- X,S]=1$; Let $f:X\rightarrow \tau S$ and $g:\tau^-X\rightarrow S$ be two non-zero maps then
\begin{equation}\label{Eq:XsSx}
\begin{array}{ccc}
X_S=\textrm{Ker}(f)&\textrm{and}
&
S^X=\textrm{Im}\,(g).
\end{array}
\end{equation}
\begin{example}
Let $Q$ be the equioriented quiver \eqref{Eq:TypeAQuiver} of type $A_n$. Given indices $1\leq i< k<j< \ell\leq n$ let us consider the indecomposable $Q$--representations $X=U_{k,\ell}$ and $S=U_{i,j}$. In view of \eqref{Eq:ExtSpaceTypeA}, $[S,X]^1=1$ and there is a  generating extension 
$
\xi: 0\rightarrow X\rightarrow Y\rightarrow S\rightarrow 0
$ 
where $Y=U_{i,\ell}\oplus U_{k,j}$. A non-zero map between $X$ and $\tau S$ is injective, and hence, by \eqref{Eq:XsSx},  $X_S=0$. The image of a non-zero map $\tau^-X=U_{k-1,\ell-1}\rightarrow U_{i,j}$ is $S^X=U_{k-1,\ell-1}$. For quivers of this type the only generalized almost split sequences are the almost split sequences (see exercise~\ref{Eser:TypeAGenerating}). In general, one can easily find examples of generilized almost split sequences which are not almost split (see exercise~\ref{Eser:TypeDGenerating}). 
\end{example}
Turning back to $\textrm{Im}(\Psi^\xi_{\mathbf{f,g}})$, it follows from the  discussion above that  $(N_1,N_2)$ is \emph{not} in the image of $\Psi^\xi_{\mathbf{f,g}}$ if and only if $N_1\subseteq X_S$ and $N_2\supseteq S^X$. We hence see that for a non-split generating extension $\xi$  the image of $\Psi^\xi_\mathbf{f,g}$ is given by:
\begin{equation}\label{Eq:ImageNonSplitGenerating}
\textrm{Im}\,\Psi^\xi_\mathbf{f,g}=\left(\Gr_\mathbf{f}(X)\times \Gr_\mathbf{g}(S)\right)\setminus \left(\Gr_\mathbf{f}(X_S)\times \Gr_{\mathbf{g-dim}\,S^X}(S/S^X)\right).
\end{equation}

The following technical result, that we call the ``reduction theorem'', provides a way to reduce the problem of checking property (C) or (S) for a $Q$-representation $Y$ which is the center of a generating extension. 

\begin{theorem}\label{Thm:RedThm}
Let $\xi\in\Ext^1(S,X)$ be a generating extension. Then $\Psi_\mathbf{f,g}^\xi$ is a locally trivial affine bundle over its image of rank $\langle\mathbf{g},\mathbf{dim}\,X-\mathbf{f}\rangle$. 
\end{theorem}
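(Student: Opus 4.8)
The plan is to analyze the fibers of $\Psi^\xi_{\mathbf{f},\mathbf{g}}$ over a point $(N_1,N_2)$ in the image, and to show that each fiber is an affine space of the stated dimension, with the affine-bundle structure coming from the quotient realization \eqref{Eq:QuotQuivGrass} of the quiver Grassmannians. The key observation is that a subrepresentation $N\in\mathcal{S}^\xi_{\mathbf{f},\mathbf{g}}$ lying over $(N_1,N_2)$ is the same datum as an extension of $N_2$ by $N_1$ sitting compatibly inside $\xi$: writing $Y$ for the center of $\xi$, such an $N$ is a subrepresentation of $Y$ with $N\cap X=N_1$ (under $\iota$) and $\pi(N)=N_2$, so that $N$ fits into a short exact sequence $0\to N_1\to N\to N_2\to 0$ which, pushed out to $0\to X\to Y\to S\to 0$, recovers (a sub-extension of) $\xi$. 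Thus the fiber of $\Psi^\xi_{\mathbf{f},\mathbf{g}}$ over $(N_1,N_2)$ is a torsor under a suitable space of ``liftings'', and I expect this torsor to be an affine space.

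First I would set up the lifting count. Fix a complement-type splitting at the level of graded vector spaces so that an element $N$ over $(N_1,N_2)$ is described by choosing, for a fixed linear section $\sigma\colon N_2\to Y$ of $\pi|_{\pi^{-1}(N_2)}$, the ``correction'' needed to make $\sigma$ a $Q$-morphism into $Y/N_1$ modulo $X/N_1$; the set of valid corrections is an affine subspace of $\operatorname{Hom}(\mathbf{g},\mathbf{dim}\,X-\mathbf{f})$ (degree-zero maps from $N_2$ to $X/N_1$), cut out by the linear condition that the associated $\Phi$-type obstruction vanishes. Concretely, two lifts differ by an element of $\operatorname{Hom}_Q(N_2, X/N_1)=\Ker\Phi^{X/N_1}_{N_2}$, and the condition that a lift exists at all is exactly the statement that $(N_1,N_2)$ is in the image, which by \eqref{Eq:ImageNonSplitGenerating} (or its split analogue) we already control. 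So the fiber is a torsor under $\operatorname{Hom}_Q(N_2,X/N_1)$, whose dimension by \eqref{Eq:EulForm} is $\langle\mathbf{g},\mathbf{dim}\,X-\mathbf{f}\rangle+[N_2,X/N_1]^1$. To get the clean rank $\langle\mathbf{g},\mathbf{dim}\,X-\mathbf{f}\rangle$ I must use that $(N_1,N_2)$ lies in the image: by the diagram-chasing description preceding \eqref{Eq:ImageNonSplitGenerating}, $(N_1,N_2)\in\textrm{Im}\,\Psi^\xi_{\mathbf{f},\mathbf{g}}$ forces $[N_2,X/N_1]^1=0$, so the torsor group has the right dimension, and $[N_2,X/N_1]^1$ being constant (namely $0$) on the image is what makes the fiber dimension constant.

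Next I would upgrade the pointwise statement to an honest locally trivial affine bundle, working in the geometric-quotient model \eqref{Eq:QuotQuivGrass} to stay in the algebraic category. Over $\operatorname{Hom}^0(\mathbf{f},X)\times\operatorname{Hom}^0(\mathbf{g},S)$ one forms the affine scheme of lifts $(L, \iota_{N_1}, \iota_{N_2}, \tilde\iota)$ as above; the vanishing of the $\Phi$-obstruction is a family of linear equations of locally constant rank over the image locus, so the solution scheme is, Zariski-locally on the image, a trivial vector bundle, and descending by the free $G_\mathbf{e}$-action (and its counterparts for $\mathbf{f},\mathbf{g}$) gives the locally trivial affine bundle structure on $\mathcal{S}^\xi_{\mathbf{f},\mathbf{g}}\to\textrm{Im}\,\Psi^\xi_{\mathbf{f},\mathbf{g}}$. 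In the split case $\xi=0$ the argument is the same but cleaner: the obstruction is identically the full $\Phi$ and the base is all of $\Gr_\mathbf{f}(X)\times\Gr_\mathbf{g}(S)$.

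The main obstacle I anticipate is the affineness of the fibers, i.e. checking that the torsor under $\operatorname{Hom}_Q(N_2,X/N_1)$ really is a (non-empty, linear) affine space and not merely a torsor under a vector group in the abstract — equivalently, that the sheaf of local sections is a vector bundle with vanishing higher obstruction, so that one genuinely gets $\mathbb{A}^{\langle\mathbf{g},\mathbf{dim}\,X-\mathbf{f}\rangle}$-fibers and local triviality rather than just equidimensional fibers. This is exactly where \emph{generating} is used: because $\Ext^1(S,X)$ is at most one-dimensional, the obstruction diagram \eqref{Eq:CommDiagPsiGenerating} degenerates enough that the ``second obstruction'' to patching lifts cannot appear, and the lift space becomes a linear subspace cut out by a single (rank-locally-constant) linear map. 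Making this last point precise — tracking the linearity of the lifting equations in the quotient model and the constancy of their rank over the image — is the technical heart; once it is in place, the dimension count via \eqref{Eq:EulForm} and the local triviality are routine.
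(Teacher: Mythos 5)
Your proposal is correct and follows essentially the argument the paper has in mind (the proof of Theorem~\ref{Thm:RedThm} is deferred to \cite{CEFR}, but the surrounding discussion of $\textrm{Im}\,\Psi^\xi_{\mathbf{f,g}}$ is exactly the needed setup): the fiber over $(N_1,N_2)$ is the set of sections of the induced extension $0\to X/N_1\to \pi^{-1}(N_2)/N_1\to N_2\to 0$, hence a torsor under $\Hom_Q(N_2,X/N_1)$, and the generating hypothesis forces $[N_2,X/N_1]^1=0$ on the image so that this space has the constant dimension $\langle\mathbf{g},\mathbf{dim}\,X-\mathbf{f}\rangle$. Your worry about ``affineness of the fibers'' is over-cautious: a non-empty torsor under a vector bundle of constant rank over the image is automatically a Zariski-locally trivial affine bundle, so once the constancy of $[N_2,X/N_1]$ on the image is in place the rest is indeed routine.
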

Theorem~\ref{Thm:RedThm} is called ``reduction theorem'' because of the following corollary. 
\begin{corollary}\label{Cor:RedThm}
Let $\xi:\xymatrix@1@C=15pt{0\ar[r]&X\ar[r]&Y\ar[r]&S\ar[r]&0}$ be a generating extension. 
If $\textrm{Im}\,\Psi^\xi_\mathbf{f,g}$ admits a cellular decomposition for all $\mathbf{f}$ and $\mathbf{g}$, then  $Y$ has property (C). In particular, if $[S,X]^1=0$ and both $X$ and $S$ have property (C) (or (S)) then $Y$ has property (C) (or (S)).
\end{corollary}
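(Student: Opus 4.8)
The plan is to derive the corollary formally from Theorem~\ref{Thm:RedThm} together with the two transfer principles recalled above \cite[Lemma~1.9]{DLP}: a locally trivial affine bundle over a base having property (C) (resp.\ (S)) again has property (C) (resp.\ (S)), and a variety admitting an $\alpha$-partition into pieces with property (C) (resp.\ (S)) has property (C) (resp.\ (S)) itself. The only further inputs are the $\alpha$-partition $\Gr_\mathbf{e}(Y)=\coprod_{\mathbf{f+g=e}}\mathcal{S}_{\mathbf{f,g}}^\xi$ of \cite[Lemma~20]{CEFR}, recalled above, the description of $\textrm{Im}\,\Psi^\xi_{\mathbf{f,g}}$ in the split case, and the stability of property (C)/(S) under finite products.

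First I would prove the main assertion. Fix a dimension vector $\mathbf{e}$. Since $\Gr_\mathbf{e}(Y)=\coprod_{\mathbf{f+g=e}}\mathcal{S}_{\mathbf{f,g}}^\xi$ is a finite $\alpha$-partition, it suffices to equip each stratum $\mathcal{S}_{\mathbf{f,g}}^\xi$ with a cellular decomposition. By Theorem~\ref{Thm:RedThm} the map $\Psi^\xi_{\mathbf{f,g}}:\mathcal{S}_{\mathbf{f,g}}^\xi\to\textrm{Im}\,\Psi^\xi_{\mathbf{f,g}}$ is a locally trivial affine bundle, so by \cite[Lemma~1.9]{DLP} the cellular decomposition of the base $\textrm{Im}\,\Psi^\xi_{\mathbf{f,g}}$ — which exists by hypothesis — lifts to one of $\mathcal{S}_{\mathbf{f,g}}^\xi$. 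Re-assembling along the $\alpha$-partition, again by \cite[Lemma~1.9]{DLP}, yields a cellular decomposition of $\Gr_\mathbf{e}(Y)$; as $\mathbf{e}$ was arbitrary, $Y$ has property (C).

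Next the ``in particular'' clause. If $[S,X]^1=0$, then a generating extension $\xi\in\Ext^1(S,X)=0$ is the zero class, hence split, and $Y\cong X\oplus S$; as recalled just before the statement, $\Psi^\xi_{\mathbf{f,g}}$ is then surjective, so $\textrm{Im}\,\Psi^\xi_{\mathbf{f,g}}=\Gr_\mathbf{f}(X)\times\Gr_\mathbf{g}(S)$. If $X$ and $S$ both have property (C), then $\Gr_\mathbf{f}(X)$ and $\Gr_\mathbf{g}(S)$ each carry a cellular decomposition, and the product cellular decomposition — products of cells, with the two $\alpha$-orderings refined to a single total order on index pairs — exhibits $\textrm{Im}\,\Psi^\xi_{\mathbf{f,g}}$ as a variety with a cellular decomposition; the first part then gives property (C) for $Y$. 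The same scheme handles property (S): locally trivial affine bundles and $\alpha$-partitions preserve (S) by \cite[Lemma~1.9]{DLP}, and property (S) survives products by the Künneth formula in Borel--Moore homology, since ``no odd homology'' and ``torsion-free even homology'' are each product-stable once one notes that $\mathrm{Tor}$ of torsion-free groups vanishes, and since the cycle map is compatible with exterior products.

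Given Theorem~\ref{Thm:RedThm} and formula \eqref{Eq:ImageNonSplitGenerating}, there is no serious obstacle in the corollary itself; the only bookkeeping worth spelling out is that the various $\alpha$-partitions in play — the one of $\Gr_\mathbf{e}(Y)$ indexed by $(\mathbf{f,g})$, the one furnished inside each $\textrm{Im}\,\Psi^\xi_{\mathbf{f,g}}$, and the product one on $\Gr_\mathbf{f}(X)\times\Gr_\mathbf{g}(S)$ — must be refined to a single compatible total order, so that the outcome is a genuine cellular decomposition and not merely a partition into affine cells (cf.\ Example~\ref{Ex:NotAlphaPart}). The real difficulty lies one level up, in applications: verifying that the open subset $\textrm{Im}\,\Psi^\xi_{\mathbf{f,g}}$ of \eqref{Eq:ImageNonSplitGenerating} admits a cellular decomposition — but for the corollary this is assumed.
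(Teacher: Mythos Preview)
Your proof is correct and follows precisely the approach the paper intends: the corollary is stated without proof because it is an immediate consequence of Theorem~\ref{Thm:RedThm}, the $\alpha$-partition of \cite[Lemma~20]{CEFR}, and the transfer principles of \cite[Lemma~1.9]{DLP}, all of which you invoke in the right order. Your treatment of the split case and of the product stability of (C) and (S) is also the intended one, and your remark about refining the nested $\alpha$-partitions to a single total order is a worthwhile piece of bookkeeping that the paper leaves implicit.
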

Corollary~\ref{Cor:RedThm} has the following immediate consequence: if $M$ is a preprojective $Q$--representation and all its indecomposable direct summands have property (S) or (C) then $M$ itself has property (S) or (C). This is because the preprojective component is directed. 

In order to prove that every representation of a Dynkin quiver has property (C) it is hence enough to prove it for the indecomposables. This is done by induction using an elementary technique (see \cite[Theorem~45]{CEFR}).

If $Q$ is an  affine quiver then one can use its well-known representation theory to deduce that every indecomposable preprojective $Y$ fits as the middle term of a generating extension $\xi$. Moreover the subrepresentations $X_S$ and $S^X$ are under control and hence is the image of $\Psi_{\mathbf{f,g}}^\xi$. By corollary~\ref{Cor:RedThm} one gets the proof of part (2) of theorem~\ref{Thm:CEFR} by induction.   Proving property (C) for preprojective representations of an arbitrary quiver $Q$ seems to be much harder, due to the fact that the reduction theorem only allow us to use generating extensions. This is done in \cite{RW} for quivers with two vertices by combining the reduction theorem~\ref{Thm:RedThm} with covering theory.

The proof of theorem~\ref{Thm:CEFR} has the following easy corollaries.
\begin{corollary}
Given an acyclic quiver $Q$, every $Q$-representation $M$ whose regular part is rigid has property (S). 
\end{corollary}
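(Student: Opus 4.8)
The plan is to split $M$ into its preinjective, regular and preprojective parts and to treat the three summands separately before reassembling. Using the split filtration recalled in Section~\ref{Sec:1}, write $M\simeq M_{\mathcal{I}}\oplus M_{\mathcal{R}}\oplus M_{\mathcal{P}}$ with $M_{\mathcal{I}}\in\mathcal{I}_Q$, $M_{\mathcal{R}}\in\mathcal{R}_Q$ and $M_{\mathcal{P}}\in\mathcal{P}_Q$; by hypothesis $M_{\mathcal{R}}$ is rigid. First I would show that each of the three summands has property~(S), and then glue them together by two applications of the reduction Corollary~\ref{Cor:RedThm}.

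For the regular part there is nothing to do: $M_{\mathcal{R}}$ is rigid, so property~(S) follows from Theorem~\ref{Thm:CEFR}(3). For the preprojective part, the first step is to observe that every indecomposable preprojective $\tau^{-j}P_k$ is rigid: by the Auslander--Reiten formula~\eqref{ARFormulas} one has $\Ext^1(\tau^{-j}P_k,\tau^{-j}P_k)\simeq D\Hom(\tau^{-j}P_k,\tau^{-j+1}P_k)=0$, since $\tau^{-j+1}P_k$ strictly precedes $\tau^{-j}P_k$ in the directed component $\mathcal{P}_Q$. Hence by Theorem~\ref{Thm:CEFR}(3) each indecomposable summand of $M_{\mathcal{P}}$ has property~(S), and the consequence of Corollary~\ref{Cor:RedThm} noted above for preprojective modules (this is exactly the step that uses directedness of $\mathcal{P}_Q$) upgrades this to: $M_{\mathcal{P}}$ has property~(S). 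For the preinjective part I would argue dually, transporting every quiver Grassmannian of $M_{\mathcal{I}}$ through the isomorphism $\zeta$ of~\eqref{Eq:DualityGrass} onto a quiver Grassmannian of the preprojective $Q^{op}$-representation $DM_{\mathcal{I}}$ and applying the preprojective case over $Q^{op}$.

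Next I would glue the three pieces, in the order $\mathcal{I}$, then $\mathcal{R}$, then $\mathcal{P}$. Since $\tau$ carries each of $\mathcal{P}_Q,\mathcal{R}_Q,\mathcal{I}_Q$ into itself (up to zero summands) and $\Hom(\mathcal{R}_Q,\mathcal{P}_Q)=\Hom(\mathcal{I}_Q,\mathcal{P}_Q)=\Hom(\mathcal{I}_Q,\mathcal{R}_Q)=0$, the Auslander--Reiten formula~\eqref{ARFormulas} yields $[M_{\mathcal{P}},M_{\mathcal{R}}]^1=[M_{\mathcal{P}},M_{\mathcal{I}}]^1=[M_{\mathcal{R}},M_{\mathcal{I}}]^1=0$. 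Hence the split sequence $0\to M_{\mathcal{R}}\to M_{\mathcal{R}}\oplus M_{\mathcal{P}}\to M_{\mathcal{P}}\to 0$ is a generating extension with $[M_{\mathcal{P}},M_{\mathcal{R}}]^1=0$, and Corollary~\ref{Cor:RedThm} gives that $M_{\mathcal{R}}\oplus M_{\mathcal{P}}$ has property~(S); then $0\to M_{\mathcal{I}}\to M\to M_{\mathcal{R}}\oplus M_{\mathcal{P}}\to 0$ is again a generating extension with $[M_{\mathcal{R}}\oplus M_{\mathcal{P}},M_{\mathcal{I}}]^1=0$, and a second application of Corollary~\ref{Cor:RedThm} concludes that $M$ has property~(S).

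The hard part is not conceptual but rather getting the ordering right: Corollary~\ref{Cor:RedThm} is only useful when the submodule is the ``later'' part of $M$ and the quotient is the ``earlier'' part, i.e. when $\Ext^1$ from the quotient to the submodule vanishes; performing the gluing in the opposite order would require $\Ext^1(\mathcal{I}_Q,\mathcal{P}_Q)=0$, which fails in general. One must also keep in mind that $M_{\mathcal{P}}$ and $M_{\mathcal{I}}$ themselves need not be rigid---only their indecomposable summands are---so the argument genuinely needs the directedness of the preprojective and preinjective components, and cannot be collapsed into a single application of Theorem~\ref{Thm:CEFR}(3) to $M$.
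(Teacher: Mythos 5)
Your proof is correct and follows essentially the same route as the paper: decompose $M=M_\mathcal{P}\oplus M_\mathcal{R}\oplus M_\mathcal{I}$, get property (S) for each piece (rigidity for $M_\mathcal{R}$, directedness plus the reduction theorem for the preprojective and preinjective parts), and glue using the vanishing of $\Ext^1$ from the earlier to the later parts via Corollary~\ref{Cor:RedThm}. The only cosmetic differences are your gluing order (the paper first forms $M_\mathcal{R}\oplus M_\mathcal{I}$ using $[M_\mathcal{R},M_\mathcal{I}]^1=0$ and then attaches $M_\mathcal{P}$) and your detour through the duality $\zeta$ for $M_\mathcal{I}$, which is unnecessary since $\mathcal{I}_Q$ is itself directed.
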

\begin{proof}
Decompose $M=M_\mathcal{P}\oplus M_\mathcal{R}\oplus M_\mathcal{I}$ as the sum of a preprojective, a regular and a preinjective $Q$--representation. By  theorem~\ref{Thm:CEFR}, every indecomposable direct summand of both $M_\mathcal{P}$,  $M_\mathcal{R}$ and $M_\mathcal{I}$ has property (S). By the reduction theorem~\ref{Thm:RedThm} it follows that $M_\mathcal{P}$,  $M_\mathcal{R}$ and $M_\mathcal{I}$ have property (S). Since $[M_\mathcal{P},M_\mathcal{R}\oplus M_\mathcal{I}]^1=[M_\mathcal{R}, M_\mathcal{I}]^1=0$ again by the reduction theorem~\ref{Thm:RedThm} we get that $M$ has property (S). 
\end{proof}
\begin{corollary}\cite[Corollary~42]{CEFR}
Let $M$ be a rigid representation of a quiver $Q$ and let  $\iota: \Gr_\mathbf{e}(M)\rightarrow \prod_{i\in Q_0}\Gr_{\mathbf{e}_i}(M_i)$ be the closed embedding. Then the induced map in cohomology $\iota^\ast:H^\bullet(\prod_{i\in Q_0}\Gr_{\mathbf{e}_i}(M_i))\rightarrow H^\bullet(\Gr_\mathbf{e}(M))$ is surjective. 
\end{corollary}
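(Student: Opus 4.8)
The plan is to pass from cohomology to Chow groups and then to derive the surjectivity of $\iota^\ast$ from a decomposition of the class of the diagonal of $\Gr_\mathbf{e}(M)$ all of whose factors are restricted from the ambient product of Grassmannians. For \emph{Step~1 (reduction to Chow groups)}, put $X=\Gr_\mathbf{e}(M)$ and $Y=\prod_{i\in Q_0}\Gr_{\mathbf{e}_i}(M_i)$. Since $M$ is rigid, $X$ is a smooth projective variety of dimension $\langle\mathbf{e},\mathbf{dim}\,M-\mathbf{e}\rangle$ by Proposition~\ref{Prop:RigidQGSmooth}, and it has property~(S) by Theorem~\ref{Thm:CEFR}(3), so that $H^{\mathrm{odd}}(X)=0$ and the cycle map $A^\bullet(X)\to H^{2\bullet}(X)$ is an isomorphism; the same holds for $Y$, which is a product of ordinary Grassmannians, hence smooth projective and cellular, hence has property~(S). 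Because the cycle map is compatible with the Gysin pull-back along the regular embedding $\iota$, the map $\iota^\ast$ in cohomology is surjective if and only if $\iota^\ast\colon A^\bullet(Y)\to A^\bullet(X)$ is surjective, and it is the latter I would establish.

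\emph{Step~2: $X$ as the zero locus of a regular section.} On $Y$ there are the tautological subbundles $\mathcal{S}_i\subseteq\mathcal{O}_Y\otimes M_i$ and quotients $\mathcal{Q}_i$. Each arrow $\alpha\colon i\to j$ of $Q$ gives, via $M_\alpha$, a section $s_\alpha$ of $\mathcal{S}_i^\vee\otimes\mathcal{Q}_j$ (the composite $\mathcal{S}_i\hookrightarrow\mathcal{O}_Y\otimes M_i\xrightarrow{M_\alpha}\mathcal{O}_Y\otimes M_j\twoheadrightarrow\mathcal{Q}_j$), and by the first realization of the universal quiver Grassmannian, $X$ is the common zero locus of the $s_\alpha$; that is, $X=Z(s)$ for the section $s=(s_\alpha)_{\alpha}$ of $\mathcal{E}:=\bigoplus_{\alpha\in Q_1}\mathcal{S}_{s(\alpha)}^\vee\otimes\mathcal{Q}_{t(\alpha)}$. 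One has $\operatorname{rk}\mathcal{E}=\sum_{\alpha}e_{s(\alpha)}(d_{t(\alpha)}-e_{t(\alpha)})=\dim Y-\langle\mathbf{e},\mathbf{dim}\,M-\mathbf{e}\rangle$, which by Step~1 equals $\operatorname{codim}_Y X$. Hence $s$ is a regular section, $\iota$ a regular embedding, and $N_{X/Y}\cong\mathcal{E}|_X$.

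\emph{Step~3: decomposing the diagonal — the main obstacle.} Since $Y$ is cellular, $[\Delta_Y]=\sum_j[\overline{C_j}]\times[\overline{C_j^\vee}]$ in $A^\bullet(Y\times Y)$, where $\{[\overline{C_j}]\}$ and $\{[\overline{C_j^\vee}]\}$ are the Poincaré-dual bases given by closures of Schubert cells. What is needed is an analogous decomposition
\[
[\Delta_X]=\textstyle\sum_j a_j\times b_j\quad\text{in }A^\bullet(X\times X),\qquad a_j,b_j\in\operatorname{Im}\!\bigl(\iota^\ast\colon A^\bullet(Y)\to A^\bullet(X)\bigr).
\]
Pulling $[\Delta_Y]$ back along $\iota\times\iota$ does not suffice on its own: by excess intersection along $\Delta_X$, whose excess bundle is $N_{X/Y}=\mathcal{E}|_X$, one only obtains $(\iota\times\iota)^\ast[\Delta_Y]=(\Delta_X)_\ast\bigl(\iota^\ast c_{\mathrm{top}}(\mathcal{E})\bigr)=\sum_j\iota^\ast[\overline{C_j}]\times\iota^\ast[\overline{C_j^\vee}]$, i.e.\ the diagonal class multiplied by the non-invertible Euler class $\iota^\ast c_{\mathrm{top}}(\mathcal{E})$. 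The genuine decomposition of $[\Delta_X]$ is to be extracted by the Ellingsrud--Str\o mme technique for the Chow group of the zero locus of a regular section, applied to the Koszul resolution of $\mathcal{O}_X$ over $\mathcal{O}_Y$ afforded by $s$; this is exactly the argument underlying the proof of Theorem~\ref{Thm:CEFR}(3), and it produces the $a_j,b_j$ as polynomials in the restricted Chern classes $\iota^\ast c_\bullet(\mathcal{S}_i)$, hence as elements of $\operatorname{Im}(\iota^\ast)$. I expect this to be the crux: it is the step where rigidity of $M$ enters beyond smoothness, since rigidity is precisely what makes $s$ regular, so that the Ellingsrud--Str\o mme machinery applies and $X$ has property~(S).

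\emph{Step~4: conclusion.} Given such a decomposition, I would finish by the standard diagonal-as-identity argument on the smooth projective variety $X$: for any $\gamma\in A^\bullet(X)$ one has $\gamma=(\operatorname{pr}_2)_\ast\bigl([\Delta_X]\cdot\operatorname{pr}_1^\ast\gamma\bigr)$, and substituting $[\Delta_X]=\sum_j a_j\times b_j$ and using the projection formula yields $\gamma=\sum_j\bigl(\int_X a_j\,\gamma\bigr)\,b_j$, a combination of the classes $b_j\in\operatorname{Im}(\iota^\ast)$. Hence $\iota^\ast\colon A^\bullet(Y)\to A^\bullet(X)$ is surjective, and by Step~1 the map $\iota^\ast$ in cohomology is surjective as well.
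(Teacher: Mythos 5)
The paper does not actually prove this corollary in the text: it is quoted from \cite{CEFR} (Corollary~42 there), and the only indication given of the method is that it rests on a theorem of Ellingsrud and Str\o mme about decomposing the class of the diagonal in the Chow group. Measured against that intended strategy, your Steps 1, 2 and 4 are correct and are indeed the right frame: property (S) for $X=\Gr_\mathbf{e}(M)$ (Theorem~\ref{Thm:CEFR}(3)) and for the product of Grassmannians $Y$ reduces surjectivity in cohomology to surjectivity on Chow groups; $X$ is the zero scheme of the section $s_M$ of $\mathcal{E}=\bigoplus_\alpha\mathcal{S}_{s(\alpha)}^\vee\otimes\mathcal{Q}_{t(\alpha)}$, and your rank count $\operatorname{rk}\mathcal{E}=\dim Y-\langle\mathbf{e},\mathbf{d}-\mathbf{e}\rangle$ is right; and a decomposition $[\Delta_X]=\sum_j a_j\times b_j$ with $b_j\in\operatorname{Im}(\iota^\ast)$ does finish the proof by the projection-formula argument.

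The gap is in Step~3, and it is not merely a deferred computation. There is no Ellingsrud--Str\o mme theorem of the shape ``$X$ is the zero locus of a \emph{regular} section of a vector bundle on a smooth cellular variety, hence $[\Delta_X]$ decomposes into classes restricted from the ambient variety'': a smooth plane cubic is the zero locus of a regular section of the globally generated line bundle $\mathcal{O}(3)$ on the cellular variety $\PP^2$, yet $A_0(\PP^2)\rightarrow A_0(C)$ is far from surjective and $C$ fails property (S). So regularity of $s_M$ --- which is all you extract from rigidity in Step~3 --- cannot be what makes the machine run. What rigidity actually buys, beyond smoothness, is that the $G_\mathbf{d}$--orbit of $M$ is \emph{dense} in $R_\mathbf{d}$, i.e.\ $s_M$ is a \emph{general} member of the family of sections $R_\mathbf{d}\rightarrow H^0(Y,\mathcal{E})$, a family which moreover generates $\mathcal{E}$ (this is exactly the statement that $p_\mathbf{e}:\Gr^Q_\mathbf{e}(\mathbf{d})\rightarrow Y$ is a sub-vector-bundle of $R_\mathbf{d}\times Y$ of corank $\operatorname{rk}\mathcal{E}$). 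The Ellingsrud--Str\o mme input used in \cite{CEFR} is accordingly a statement about the general fibre of a family whose total space has a diagonal decomposition: one applies it to the universal quiver Grassmannian $p_\mathbf{e,d}:\Gr^Q_\mathbf{e}(\mathbf{d})\rightarrow R_\mathbf{d}$, whose total space is a vector bundle over $Y$ and hence cellular with all Chow classes restricted from $Y$, and the decomposition of the relative diagonal then specializes to the fibre over the dense orbit, producing the required $a_j,b_j\in\operatorname{Im}(\iota^\ast)$. If you replace Step~3 by this universal-family argument the proof goes through; as written, the sentence ``rigidity is precisely what makes $s$ regular, so that the Ellingsrud--Str\o mme machinery applies'' is where the argument breaks.
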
 

\begin{corollary}\cite[Corollary~2]{CEFR}
Let $M$ be a rigid representation of a quiver $Q$ and let $X=\Gr_\mathbf{e}(M)$ be a quiver Grassmannian attached to it. Then $X$ is defined over $\ZZ$ and it has polynomial point count, i.e.
$$
\# \Gr_\mathbf{e}(M)|_{\mathbf{F}_q}=\sum_i\mathrm{dim}_\mathbb{Q} H^{2i}(\Gr_\mathbf{e}(M),\mathbb{Q})q^i.
$$ 
\end{corollary}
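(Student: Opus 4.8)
The plan is to build a model of $X=\Gr_\mathbf{e}(M)$ over $\ZZ$, to observe it is smooth and proper with good reduction at all but finitely many primes, and then to compute $\#X|_{\mathbf{F}_q}$ by the Grothendieck--Lefschetz trace formula, using property~(S) (Theorem~\ref{Thm:CEFR}) to pin down the Galois action on cohomology. If $\Gr_\mathbf{e}(M)=\emptyset$ there is nothing to prove, so assume it is non-empty.

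For the integral model, recall from section~\ref{Sec:QG} that the universal quiver Grassmannian $p_\mathbf{e,d}\colon \Gr_\mathbf{e}^Q(\mathbf{d})\to R_\mathbf{d}$ is defined over $\ZZ$: the ambient product of Grassmannians is defined over $\ZZ$ via Pl\"ucker, and the incidence equations $M_\alpha(N_{s(\alpha)})\subseteq N_{t(\alpha)}$ have coefficients $0$ and $1$. The rigid locus $U=\{M'\in R_\mathbf{d}\colon \Ext^1(M',M')=0\}$ is the non-empty open subset where $\Phi^{M'}_{M'}$ has maximal rank, hence is cut out in the affine space $R_\mathbf{d}$ by the non-vanishing of minors with integer coefficients. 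A rigid representation of dimension vector $\mathbf{d}$ is unique up to isomorphism over any algebraically closed field, so by Galois descent (each exceptional summand has $\mathrm{Aut}$ a torus, hence trivial $H^1$) it is already defined over $\mathbb{Q}$; clearing denominators in its structure matrices gives a representation $\mathcal{M}$ over $\ZZ$ with $\mathcal{M}\otimes\CC\cong M$. Put $\mathcal{X}:=p_\mathbf{e,d}^{-1}(\mathcal{M})$, a $\ZZ$-scheme with $\mathcal{X}\otimes\CC\cong X$; this is the first assertion. Inverting finitely many primes $N$, we may assume $\mathcal{M}_{\mathbf{F}_q}$ stays rigid for all residue characteristics $q\nmid N$, so by the characteristic-free Proposition~\ref{Prop:RigidQGSmooth} every geometric fibre of $\mathcal{X}\to\operatorname{Spec}\ZZ[1/N]$ is smooth of constant dimension $\langle\mathbf{e},\mathbf{dim}\,M-\mathbf{e}\rangle$, and non-empty (the image of the proper map $p_\mathbf{e,d}$ is closed, $G_\mathbf{d}$-stable and contains the dense rigid orbit, hence equals $R_\mathbf{d}$ and $p_\mathbf{e,d}$ is surjective over every field). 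Enlarging $N$ once more, generic flatness makes $\mathcal{X}\to\operatorname{Spec}\ZZ[1/N]$ flat, and a flat proper morphism with smooth geometric fibres of constant dimension is smooth and proper.

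Now fix a prime $\ell$ and a prime power $q$ with $q\nmid N\ell$. By Artin's comparison theorem and smooth-and-proper base change for $\mathcal{X}\to\operatorname{Spec}\ZZ[1/N]$, $\dim_{\mathbb{Q}_\ell}H^i_{\mathrm{et}}(\mathcal{X}_{\overline{\mathbf{F}}_q},\mathbb{Q}_\ell)=\dim_{\mathbb{Q}}H^i(X,\mathbb{Q})$ for all $i$; in particular property~(S1) for $X$ forces the odd $\ell$-adic cohomology of $\mathcal{X}_{\overline{\mathbf{F}}_q}$ to vanish. The crucial point is that $\mathrm{Frob}_q$ acts on $H^{2i}_{\mathrm{et}}(\mathcal{X}_{\overline{\mathbf{F}}_q},\mathbb{Q}_\ell)$ by the scalar $q^i$: by property~(S2), over $\CC$ the cycle class map $A_i(X)_{\mathbb{Q}}\to H_{2i}(X,\mathbb{Q})$ is an isomorphism, so (Poincar\'e duality on the smooth projective $X$) classes of algebraic cycles span $H^{2i}(X,\mathbb{Q})$; spreading these cycles out over $\ZZ[1/N]$ and specialising them to the special fibre — using compatibility of cycle class maps with proper specialisation of Chow groups and with the base-change isomorphism — produces a spanning family of Frobenius-invariant classes in $H^{2i}_{\mathrm{et}}(\mathcal{X}_{\overline{\mathbf{F}}_q},\mathbb{Q}_\ell(i))$, whence $\mathrm{Frob}_q=q^i\cdot\mathrm{id}$ on $H^{2i}_{\mathrm{et}}$. (Equivalently, spread out the decomposition of the diagonal of $X$ in the Chow group underlying the proof of Theorem~\ref{Thm:CEFR}(3): it exhibits the Chow motive of $\mathcal{X}_{\mathbf{F}_q}$ as a sum of Tate motives, with the same effect.) The Grothendieck--Lefschetz trace formula for the smooth projective $\mathcal{X}_{\mathbf{F}_q}$ then gives
$$
\#\Gr_\mathbf{e}(M)|_{\mathbf{F}_q}=\sum_i(-1)^i\operatorname{tr}\big(\mathrm{Frob}_q\mid H^i_{\mathrm{et}}(\mathcal{X}_{\overline{\mathbf{F}}_q},\mathbb{Q}_\ell)\big)=\sum_i q^i\dim_{\mathbb{Q}}H^{2i}(\Gr_\mathbf{e}(M),\mathbb{Q}),
$$
which is the claimed polynomiality.

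The delicate step, and the one I would spend most care on, is precisely this transfer of property~(S) from characteristic $0$ to characteristic $p$: one must spread out the algebraic cycles (or the decomposition of the diagonal) over $\operatorname{Spec}\ZZ$, control the finitely many primes of bad reduction, and invoke the compatibility of cycle classes with specialisation and with smooth-and-proper base change. Everything else — the construction of the incidence model, the characteristic-free smoothness of quiver Grassmannians of rigid modules, and the Lefschetz count itself — is routine once this "Tate type" input is in hand.
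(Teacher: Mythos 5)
Your proof is correct and follows the route the paper intends: the corollary is presented as an immediate consequence of Theorem~\ref{Thm:CEFR}(3), whose proof rests on the Ellingsrud--Str\o mme decomposition of the diagonal in the Chow group, and your argument (spread out an integral model, use property~(S) / the algebraicity of cohomology to force Frobenius to act by $q^i$ on $H^{2i}_{\mathrm{et}}$, then apply Grothendieck--Lefschetz) is exactly the standard way to extract polynomial point count from that input. The only slip is the parenthetical justifying non-emptiness of the special fibres: asserting that the image of $p_{\mathbf{e},\mathbf{d}}$ over $\overline{\mathbf{F}}_p$ ``contains the dense rigid orbit'' presupposes what you are proving; instead observe that $\mathcal{X}\to\operatorname{Spec}\ZZ$ is proper, so its image is closed and contains the generic point, hence is all of $\operatorname{Spec}\ZZ$.
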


\section{The cluster multiplication formula}\label{Sec:CCMap}
In this section we provide an application of the reduction theorem~\ref{Thm:RedThm} to cluster algebras. Let $Q$ be an acyclic quiver with $n$ vertices and let $M$ be a $Q$-representation. The \emph{F-polynomial} of $M$ is 
$$
F_M(\mathbf{y})=\sum_\mathbf{e}\chi(\Gr_\mathbf{e}(M))\mathbf{y}^\mathbf{e}\in\ZZ[y_1,\cdots, y_n]
$$
where $\chi$ denotes the Euler-Poincar\`e characteristic. The \emph{$\mathbf{g}$--vector} or \emph{index} of $M$ is 
$$
\mathbf{g}_M=[I_1^M]-[I^M_0]\in K_0(\textrm{Rep}(Q))\simeq \ZZ^{Q_0}
$$
where $0\rightarrow M\rightarrow I_0^M\rightarrow I_1^M\rightarrow 0$ is the minimial injective resolution of $M$. Notice that $(\mathbf{g}_M)_i=-\langle S_i,M\rangle$. The \emph{exhange matrix} $B=(b_{i,j})_{i,j\in Q_0}\in \textrm{Mat}_{n\times n}(\ZZ)$ of $Q$ is the integer matrix given by
$$
b_{i,j}=\#\{j\rightarrow i\in Q_1\}-\#\{i\rightarrow j\in Q_1\}.
$$
The cluster character of $M$ is the Laurent polynomial 
$$
CC_M(\mathbf{x},\mathbf{y})=\sum_\mathbf{e}\chi(\Gr_\mathbf{e}(M))\mathbf{x}^{B\mathbf{e}+\mathbf{g}_M}\mathbf{y}^\mathbf{e}=:CC(M)\in \ZZ[y_1,\cdots, y_n][x_1^{\pm1},\cdots, x_n^{\pm1}].
$$
The reduction theorem~\ref{Thm:RedThm} implies the following multiplication formula. To state the precise result we need to recall that given a generating extension $\xi\in\Ext^1(S,X)$ there exists an exact sequence $0\rightarrow X/X_S\rightarrow \tau S^X\rightarrow I\rightarrow 0$ where $I$ is injective. 
\begin{corollary}\cite[Theorem~66]{CEFR}
Let $\xi:0\rightarrow X\rightarrow Y\rightarrow S\rightarrow 0$ be a generating extension. Then 
\begin{equation}\label{Eq:MultForm}
CC(X)CC(S)=CC(Y)+\mathbf{y}^{\mathbf{dim}\,S^X}CC(X_S)CC(S/S^X)\,\mathbf{x}^\mathbf{f}
\end{equation}
where $I=\oplus_{j\in Q_0}I_j^{f_j}$. 
\end{corollary}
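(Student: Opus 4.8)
The plan is to reduce the identity \eqref{Eq:MultForm} to an equality of Euler characteristics and then match the $\mathbf{x}$- and $\mathbf{y}$-monomials on the two sides. Recall the $\alpha$-partition $\Gr_\mathbf{e}(Y)=\coprod_{\mathbf{f+g=e}}\mathcal{S}^\xi_{\mathbf{f,g}}$ into locally closed pieces, and that by the reduction theorem~\ref{Thm:RedThm} each restriction $\Psi^\xi_{\mathbf{f,g}}\colon\mathcal{S}^\xi_{\mathbf{f,g}}\to\mathrm{Im}\,\Psi^\xi_{\mathbf{f,g}}$ is a locally trivial affine bundle. Since the Euler characteristic is additive over finite partitions into locally closed subvarieties, multiplicative on products, and unchanged by passing to the total space of a locally trivial affine bundle (the fibre $\mathbb{A}^N$ contributes a factor $1$), the first step is to record
\[
\chi(\Gr_\mathbf{e}(Y))=\sum_{\mathbf{f+g=e}}\chi\bigl(\mathrm{Im}\,\Psi^\xi_{\mathbf{f,g}}\bigr).
\]
If $\xi$ is split then $\Psi^\xi_{\mathbf{f,g}}$ is onto, so the right-hand side is the convolution $\sum_{\mathbf{f+g=e}}\chi(\Gr_\mathbf{f}(X))\chi(\Gr_\mathbf{g}(S))$, and \eqref{Eq:MultForm} collapses to $CC(Y)=CC(X)CC(S)$; so from now on assume $\xi$ non-split.

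For non-split $\xi$ I would feed in the explicit image \eqref{Eq:ImageNonSplitGenerating}. Since the subrepresentations of $X$ contained in $X_S$ form a closed subvariety of $\Gr_\mathbf{f}(X)$, and likewise the subrepresentations of $S$ containing $S^X$ form a closed subvariety of $\Gr_\mathbf{g}(S)$, the excised locus $\Gr_\mathbf{f}(X_S)\times\Gr_{\mathbf{g}-\mathbf{dim}\,S^X}(S/S^X)$ is closed in $\Gr_\mathbf{f}(X)\times\Gr_\mathbf{g}(S)$, whence
\[
\chi(\Gr_\mathbf{e}(Y))=\sum_{\mathbf{f+g=e}}\Bigl(\chi(\Gr_\mathbf{f}(X))\chi(\Gr_\mathbf{g}(S))-\chi(\Gr_\mathbf{f}(X_S))\chi(\Gr_{\mathbf{g}-\mathbf{dim}\,S^X}(S/S^X))\Bigr).
\]
Multiplying by $\mathbf{x}^{B\mathbf{e}+\mathbf{g}_Y}\mathbf{y}^\mathbf{e}$ and summing over $\mathbf{e}$, the first summand factors as $CC(X)CC(S)$ once one uses that the index is additive on the short exact sequence $\xi$, i.e.\ $\mathbf{g}_Y=\mathbf{g}_X+\mathbf{g}_S$. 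For the second summand I would substitute $\mathbf{a}=\mathbf{f}$ and $\mathbf{b}=\mathbf{g}-\mathbf{dim}\,S^X$, so that $\mathbf{e}=\mathbf{a}+\mathbf{b}+\mathbf{dim}\,S^X$; collecting terms it becomes $\mathbf{y}^{\mathbf{dim}\,S^X}CC(X_S)CC(S/S^X)$ multiplied by $\mathbf{x}^{B\,\mathbf{dim}\,S^X+\mathbf{g}_Y-\mathbf{g}_{X_S}-\mathbf{g}_{S/S^X}}$. Thus \eqref{Eq:MultForm} follows provided
\[
B\,\mathbf{dim}\,S^X+\mathbf{g}_Y=\mathbf{f}+\mathbf{g}_{X_S}+\mathbf{g}_{S/S^X},
\]
where $\mathbf{f}$ is the multiplicity vector of $I=\bigoplus_j I_j^{f_j}$.

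To establish this last linear identity I would assemble the additivity of the index on the three short exact sequences $0\to X_S\to X\to X/X_S\to 0$, $0\to S^X\to S\to S/S^X\to 0$ and the one recalled in the statement $0\to X/X_S\to\tau S^X\to I\to 0$, which in particular legitimises $\tau S^X$; together with $\mathbf{g}_{I_j}$ being the negative of the $j$-th coordinate vector (immediate from $(\mathbf{g}_M)_i=-\langle S_i,M\rangle$ and $[S_i,I_j]=\delta_{ij}$), so $\mathbf{g}_I=-\mathbf{f}$; and the Auslander--Reiten/Coxeter relation $\mathbf{g}_{\tau N}=-\mathbf{g}_N-B\,\mathbf{dim}\,N$ for $N$ without projective summands. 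The latter follows from $(\mathbf{g}_N)_i=-\langle S_i,N\rangle$, from $(\mathbf{g}_{\tau N})_i=\langle\mathbf{dim}\,N,S_i\rangle$ (using $\mathbf{dim}\,\tau N=\Phi\,\mathbf{dim}\,N$ and $\langle\alpha,\Phi\beta\rangle=-\langle\beta,\alpha\rangle$ for the Coxeter transformation $\Phi$), and from the skew-symmetrisation $\langle S_j,S_i\rangle-\langle S_i,S_j\rangle=-b_{i,j}$. Chaining these, $\mathbf{g}_Y=\mathbf{g}_{X_S}+\mathbf{g}_{X/X_S}+\mathbf{g}_{S^X}+\mathbf{g}_{S/S^X}$ while $\mathbf{g}_{X/X_S}=\mathbf{g}_{\tau S^X}-\mathbf{g}_I=-\mathbf{g}_{S^X}-B\,\mathbf{dim}\,S^X+\mathbf{f}$, and substituting gives the identity after cancelling $\mathbf{g}_{S^X}$ and $B\,\mathbf{dim}\,S^X$.

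The only genuinely substantial geometric input — that $\Psi^\xi_{\mathbf{f,g}}$ is an affine bundle with image \eqref{Eq:ImageNonSplitGenerating} — is exactly the reduction theorem~\ref{Thm:RedThm}, which I may assume, so the real work here is bookkeeping. I expect the main obstacle to be keeping every sign convention consistent: the chosen isomorphism $K_0(\mathrm{Rep}(Q))\simeq\ZZ^{Q_0}$, the direction of the $\mathbf{g}$-vector, the sign in the Coxeter relation, and the relation between $b_{i,j}$ and the skew-symmetrisation of the Euler form. A minor but necessary extra point is to justify cleanly the Euler-characteristic manipulations (additivity over locally closed strata and invariance under locally trivial affine bundles over $\CC$), which are standard.
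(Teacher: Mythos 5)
Your argument is correct and is exactly the derivation the paper intends: it unpacks the assertion that the reduction theorem~\ref{Thm:RedThm}, together with the $\alpha$-partition $\Gr_\mathbf{e}(Y)=\coprod_{\mathbf{f+g=e}}\mathcal{S}^\xi_{\mathbf{f,g}}$ and the image formula \eqref{Eq:ImageNonSplitGenerating}, yields \eqref{Eq:MultForm} by additivity and homotopy invariance of $\chi$, with the exponent of $\mathbf{x}$ pinned down by additivity of $\mathbf{g}$-vectors on the three short exact sequences and the Coxeter relation $\mathbf{g}_{\tau N}=-\mathbf{g}_N-B\,\mathbf{dim}\,N$. The bookkeeping, including the identity $B\,\mathbf{dim}\,S^X+\mathbf{g}_Y-\mathbf{g}_{X_S}-\mathbf{g}_{S/S^X}=\mathbf{f}$, checks out.
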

The multiplication formula \eqref{Eq:MultForm} is a slight generalization of the multiplication formula of Caldero and Keller \cite{CK2} and it can be interpreted as a categorification of the exchange relations in the cluster algebra associated with $Q$. The apperence of $\mathbf{dim}\,S^X$ in the formula seems to be new and provides a representation theoretic description of the $\mathbf{c}$--vectors. 
\newpage
\section{Exercises}\label{Sec:Exercises}
We conclude the notes with a list of exercises divided by arguments.
\subsection{Generalities on quiver Grassmannians}
\begin{eser}
Recall that the complete flag variety $\mathcal{F}l_{n+1}=\{U_1\subset U_2\subset\cdots\subset U_n\subset \CC^{n+1}|\, \textrm{dim }U_i=i\}$ can be realized as the quiver Grassmannian $\Gr_\mathbf{e}(P_1^{n+1})$, where $P_1$ is the projective cover of the simple $S_1$ for the equioriented type A quiver $Q: 1\rightarrow\cdots\rightarrow n$ and $\mathbf{e}=(1,2,\cdots, n)$. Show that the dimension of $\mathcal{F}l_n$ is $\langle\mathbf{e},\mathbf{d}-\mathbf{e}\rangle$ where $\mathbf{d}=\mathbf{dim}\,P_1^{n+1}$. 
\end{eser}

\begin{eser}
Consider the quiver $Q:1\rightarrow 2$ and let $M=P_1\oplus S_1\oplus S_2$. Show that $\Gr_{(1,1)}(M)$ is isomorphic to two $\PP^1$ crossing in one point.
\end{eser}
\begin{eser}
Given an e-subset $I=(1\leq i_1<i_2<\cdots<i_e\leq n)$ of $[1,n]$, compute the dimension of the affine space $\mathcal{C}_I\subset \Gr_e(\mathbb{C}^d)$.
\end{eser}

\begin{eser}
Show that the universal quiver Grassmannian $\Gr_\mathbf{e}^Q(\mathbf{d})$ has dimension $\langle\mathbf{e},\mathbf{d-e}\rangle+\textrm{dim}\,R_\mathbf{d}(Q)$.
\end{eser}

\begin{eser}
Given $g=(g_i)\in G_\mathbf{d}$, $N=(N_i)\in\Gr_\mathbf{e}(\mathbf{d})$ and $M\in R_\mathbf{d}(Q)$ consider the action: $g\cdot N=(g_i(N_i))_{i\in Q_0}$ and $g\cdot M=(g_jM_\alpha g_i^{-1})_{\alpha:i\rightarrow j\in Q_1}$. Show that the universal quiver Grassmannian $\Gr_\mathbf{e}^Q(\mathbf{d})\subset \Gr_\mathbf{e}(\mathbf{d})\times R_\mathbf{d}(Q)$ is invariant under the diagonal $G_\mathbf{d}$ action and that the map $p_\mathbf{d}:\Gr_\mathbf{e}^Q(\mathbf{d})\rightarrow R_\mathbf{d}(Q)$ is $G_\mathbf{d}$-equivariant. 
\end{eser}

\begin{eser}
The group $G_\mathbf{e}$ acts on $\Hom(\mathbf{e},\mathbf{d})$ by: $g=(g_i)\in G_\mathbf{e}$, $(N,f)\in \Hom(\mathbf{e},\mathbf{d})$, $\mathbf{g}\cdot (N,f):=((g_{t(\alpha)}N_\alpha g_{s(\alpha)}^{-1}),(f_ig_i^{-1}))$. Verify that given $M\in R_\mathbf{d}(Q)$, $\Hom(\mathbf{e},M)\subset \Hom(\mathbf{e},\mathbf{d})$ is $G_\mathbf{e}$-stable. Prove that $G_\mathbf{e}$ acts freely on $\Hom^0(\mathbf{e},M)$.
\end{eser}

\begin{eser}
Let $M$ be a rigid quiver representation. Using the tangent space formula, show that every non-empty quiver Grassmannian $\Gr_\mathbf{e}(M)$ attached to $M$ is smooth of minimal dimension $\langle\mathbf{e},\mathbf{dim}\,M-\mathbf{e}\rangle$.
\end{eser}
\begin{eser}
Let $M$ be a rigid quiver representation of dimension vector $\mathbf{d}$. Show that $\Gr_\mathbf{e}(M)$ is non--empty if and only if $\Gr_\mathbf{e}(M')$ is non--empty for every $M'\in R_\mathbf{d}(Q)$. 
\end{eser}
\begin{eser}
Consider the quiver $Q:1\rightarrow 2$ and let $M^1=P_1\oplus P_2\oplus I_1\oplus I_2$. Find the generic subrepresentation type of the degenerate flag variety $\Gr_{(1,2)}(M^1)$. 
\end{eser}
\begin{eser}\label{Es:GEnExt}
Let $X$ and $Y$ be rigid representations of an acyclic quiver $Q$ such that $[X,Y]^1=0$ and let $M$ be a $Q$--representation such that $M\leq_{deg}X\oplus Y$. Use Bongartz's theorem~\ref{Thm:Bongartz} to show that there exists a short exact sequence $0\rightarrow X\rightarrow M\rightarrow Y\rightarrow 0$.
\end{eser}
\begin{eser}
Consider the quiver $Q:1\rightarrow 2$ and let $M^2=P_1\oplus S_1^2\oplus S_2^2$. Find all iso-strata of the mf-liner degeneration of the flag variety $\Gr_{(1,2)}(M)$ and show that there are two generic subpresentation types. [Hint: compute the dimension of the iso-strata]
\end{eser}
\begin{eser}
Let $Q$ be an acyclic quiver and let $M=P\oplus I$ where $P$ is projective and $I$ is injective. Consider the quiver Grassmannian $\Gr_{\mathbf{e}}(M)$ where $\mathbf{e}=\mathbf{dim}\,P$. Let $N\in\Gr_{\mathbf{dim}\,P}(M)$. Show that the iso-stratum $\mathcal{S}_{[N]}$ has dimension less or equal than $\langle\mathbf{e},\mathbf{dim}\,M-\mathbf{e}\rangle$. Using Bongartz's theorem, show that the only generic iso-stratum is $\mathcal{S}_{[P]}$. Conclude that $\Gr_\mathbf{e}(M)$ is irreducible of minimal dimension. 
\end{eser}

\subsection{Quiver Grassmannians of type $A$}
\begin{eser}
Find a normal form for pairs of matrices $(A,B)\in\mathrm{Mat}_{m\times k}\times\rm{Mat}_{k\times n}$ by base change. In other words, find the decomposition of a representation $V$ of $Q:1\rightarrow 2\rightarrow 3$ as direct sum of indecomposable $Q$--representations. 
\end{eser}
\begin{eser}
Let $Q=1\rightarrow 2$ and let $M=S_2\oplus P_1\oplus S_1\in \textrm{Rep}(Q)$. Order the indecomposable direct summands of $M$ as $M(1)=S_1$, $M(2)=P_1$ and $M(3)=S_2$. Consider the quiver Grassmannian $\Gr_{(1,1)}(M)$ and the $\mathbb{C}^\ast$-action given by $\lambda\cdot m=\lambda^{k-1}m$ for every $m\in M(k)$. Prove that there is a torus fixed point $L$ such that its attracting set $\mathcal{C}_L$ is not an affine space. 
\end{eser}
\begin{eser}
Let $Q$ be a Dynkin quiver, and let   $X$ and $Y$ be two rigid $Q$--representations such that $[X,Y]^1=0$. Show that the dimension of each isostratum $\mathcal{S}_{[N]}$ of a quiver Grassmannian $\Gr_{\mathbf{dim}\,X}(X\oplus Y)$ satisfies: 
$$
\textrm{dim}\,\mathcal{S}_{[N]}\leq \langle\mathbf{dim}\,X,\mathbf{dim}\, Y\rangle.
$$
and equality holds if and only if $N\simeq X$. ([HINT: use the fact that the degeneration order for Dynkin quivers is equivalent to the Hom-order: $M\leq_{deg} M'$ if and only if $[M,L]\leq [M',L]$, for every $L$]). 
Conclude that $\Gr_{\mathbf{dim}\,X}(X\oplus Y)$ is irreducible of minimal dimension. 
\end{eser}

\begin{eser}
Realize the degenerate flag variety $\mathcal{F}l_4^a$ as a Schubert variety. 
\end{eser}
\begin{eser}
Let $Q:1\rightarrow 2\rightarrow 3$ and $M=P_3\oplus P_2^2\oplus S_2\oplus I_2\oplus I_1$. Verify that $M$ is catenoid and describe the natural embedding of $\Gr_{(1,2,1)}(M)$ inside a partial flag manifold. Show that there are two irreducible components.  
\end{eser}
\begin{eser}
Let $Q$ be the equioriented quiver of type $A_n$, and let $A=KQ$ be its path algebra. Put $\mathbf{e}=\mathbf{dim}\,A$ and  $\mathbf{d}=\mathbf{dim}\,(A\oplus DA)$ as in section~\ref{Sec:LinDeg}. Use theorem~\ref{Thm:CFFFR} and exercise~\ref{Es:GEnExt} to show that a point $M\in R_\mathbf{d}(Q)$ belongs to $\mathcal{U}_{\textrm{flat,Irr}}$ if and only if there exists a short exact sequence $0\rightarrow A\rightarrow M\rightarrow DA\rightarrow 0$. 
\end{eser}
\subsection{Cellular decomposition of quiver Grassmannians}
\begin{eser}
Let $\eta:\xymatrix{0\ar[r]&\tau S\ar^\iota[r]&Y\ar^\pi[r]&S\ar[r]&0}$ be an almost split sequence. Describe the image of the map 
$$
\xymatrix@R=3pt{
\Psi_{\mathbf{f,g}}^\eta:&\Gr_\mathbf{e}(E)\ar[r]& \Gr_\mathbf{f}(\tau S)\times\Gr_\mathbf{g}(S)\\
&N\ar@{|->}[r]&(N\cap \iota(\tau S), \pi(N))
}
$$
\end{eser}
\begin{eser}
Show that if $\xi\in\Ext^1(S,X)$ is generating then 
$$
\textrm{Im}(\Psi_{\mathbf{f,g}}^\xi)=\{(N_1,N_2)\in\Gr_\mathbf{f}(X)\times\Gr_\mathbf{g}(S)|\, [N_2,X/N_1]^1=0\}.
$$ 
\end{eser}

\begin{eser}
Let $\xi\in\Ext^1(S,X)$ be a non--split generating extension. Prove that $X_S$ and $S^X$ are well-defined as follows. (For $X_S$) for every $N,N'\subset X$ such that $[S, X/N]^1=[S,X/N']^1=1$, one has $[S, X/(N+N')]^1=1$. Dually (for $S^X$)
for every $N,N'\subset S$ such that $[N, X]^1=[N',X]^1=1$, one has  $[N\cap N', X]^1=1$. 
\end{eser}

\begin{eser}\label{Eser:TypeDGenerating}
Prove that an almost split sequence $\xi\in\Ext^1(X,\tau X)$ ending in a  \emph{brick} $X$ (i.e. a $Q$--representation such that $[X,X]=1$) is generalized almost split. Find an example of a generalized almost split which is not almost split. (Hint: look among the representations of a quiver of type $D_4$.) 
\end{eser}

\begin{eser}\label{Eser:TypeAGenerating}
Let $Q$ be the equioriented quiver of type $A_n$. Prove that a non-split generating extension (between two indecomposable  $Q$-representations) is generalized almost split if and only if it is almost split. 
\end{eser}
\begin{eser}
Prove the equalities in \eqref{Eq:XsSx}. 
\end{eser}
\begin{eser}
Let $X$ and $Y$ be indecomposable preprojectives, such that $\Hom(X,Y)=\CC\iota$ with $\iota:X\rightarrow Y$ an irreducible monomorphism. Let $S=Coker(\iota)$. Show that the short exact sequence 
$$
\xi:\,\xymatrix{0\ar[r]&X\ar^\iota[r]&Y\ar^\pi[r]&S\ar[r]&0}
$$
induced by $\iota$ is generating and $S^X=S$. 
\end{eser}

\begin{eser}
A short exact sequence 
$$\xymatrix@C=30pt{0\ar[r]&A\ar^(.4){f=(f_1,f_2)^t}[r]&B_1\oplus B_2\ar^(.6){g=(g_2,g_1)}[r]&C\ar[r]&0}$$ gives rise to a commutative diagram 
$$
\xymatrix{
&B_1\ar^{g_2}[dr]&\\
A\ar^{f_1}[ur]\ar_{-f_2}[dr]&&C\\
&B_2\ar_{g_1}[ur]&
}
$$
which is both a push--out and a pull-back square. Prove that
$$
\begin{array}{ccc}
Ker(g_i)\simeq Ker(f_i),& Coker(g_i)\simeq Coker(f_i)&(i=1,2).
\end{array}
$$
\end{eser}
\begin{eser}
Let $Q$ be the following quiver of type $\tilde{A}_2$:$$\xymatrix{&2\ar^\beta[dr]&\\1\ar^\alpha[ur]\ar_\gamma[rr]&&3}$$
Let $M$ be the indecomposable $Q$ representation of dimension vector $(3,3,4)$. Using the construction seen during the lecture, find a cellular decomposition of the quiver Grassmannian $\Gr_\mathbf{e}(M)$ for $\mathbf{e}=(1,2,3)$. 
\end{eser}

\begin{eser}
Let $Q$ be the following quiver of type $\tilde{D}_4$: 
$$
\xymatrix@R=3pt{
&1\\
&2\\
0\ar[uur]\ar[ur]\ar[dr]\ar[ddr]&\\
&3\\
&4
}
$$
and let $M$ be the indecomposable preprojective $Q$--representation of dimension vector $(3,2,2,2,2)$. Let $\mathbf{e}=(1,1,1,1,1)$. Find a cellular decomposition of the quiver Grassmannian $\Gr_\mathbf{e}(M)$, using the techniques seen at the lecture. Find a geometric interpretation. 
\end{eser}
\begin{eser}
Let $X$ be a rigid brick (i.e. $[X,X]=1$ and $[X,X]^1=0$) which is not projective and let $\xi: 0\rightarrow \tau X\rightarrow E\rightarrow X\rightarrow 0$ be the almost split sequence ending in $X$. Show that $E$ is rigid and $[X\oplus \tau X,E]^1=[E,X\oplus \tau X]^1=0$. Show that $\xi$ is generalized almost split. 
\end{eser}

\begin{eser}
Let $\xi:0\rightarrow X\rightarrow Y\rightarrow S\rightarrow 0$ be a generating extension. Prove that the reduction theorem~\ref{Thm:RedThm} implies the following multiplication formula of $F$-polynomials
$
F_XF_S=F_Y+\mathbf{y}^{\mathbf{dim}S^X}F_{X_S}F_{S/S^X}.
$
\end{eser}


\bibliographystyle{amsplain}

\begin{thebibliography}{99}



\bibitem{AdF1}
S.~Abeasis and A.~Del~Fra, \emph{Degenerations for the representations of an equioriented quiver of type $A_m$}, Boll.~Un.~Mat.~Ital.~Suppl.,  (1984), no.~2, 81--172.

\bibitem{AdF2}
S.~Abeasis and A.~Del~Fra, \emph{Degenerations for the representations of quiver of type $A_m$}, J. Algebra,  \textbf{93} (1985), no.~2, 376--412.

\bibitem{AdF3}
S.~Abeasis and A.~Del~Fra, \emph{Degenerations for the representations of an equioriented quiver of type $D_m$}, Adv. Math \textbf{52} (1984), no.~2,  81--172.

\bibitem{ASS}
I.~Assem, D.~Simson, A.~Skowronski, \emph{
Elements of the representation theory of associative algebras. Vol. 1.
Techniques of representation theory.} London Mathematical Society Student Texts, 65. Cambridge University Press, Cambridge, 2006.

\bibitem{AR85}
M.~Auslander, I.~Reiten, 
\emph{Modules determined by their composition factors}, Illinois Journal of Mathematics \textbf{29} (1985), no.~2,  280--301.

\bibitem{ARS}
M.~Auslander, I.~Reiten, and S.~Smalo,
\emph{Representation theory of Artin algebras}, \textbf{36}  Cambridge Studies in Advanced Mathematics. 1997. 

\bibitem{Barlow}
R. Barlow, \emph{A simply connected surface of general type with
$p_g= 0$}, Invent. Math. \textbf{79} (1985), no. 2, 293-301.

\bibitem{BGP}
I. N. Bernstein, I. M. Gel'fand, and V. A. Ponomarev, \emph{Coxeter Functors and Gabriel's Theorem}, Russian Math. Surveys \textbf{28} (1973), no. 2, 17--32.

\bibitem{B}
K.~Bongartz, \emph{On Degenerations and Extensions of
Finite Dimensional Modules}, Adv. Math. \textbf{121} (1996), 245--287.

\bibitem{CB1} W.~Crawley-Boevey, \emph{Lectures on representations of quivers}. Preprint 1992. Available at the author's webpage. 

\bibitem{CB2} W.~Crawley-Boevey, \emph{More lectures on representations of quivers}. Preprint 1992. Available at the author's webpage. 


\bibitem{CC}
P.~Caldero and F.~Chapoton, \emph{Cluster algebras as {H}all algebras of quiver representations}, 
Comment. Math. Helv. \textbf{81} (2006), no.~3, 595--616.

\bibitem{CK1}
P.~Caldero and B.~Keller, \emph{From Triangulated categories to cluster algebras}, Inv. Math. \textbf{172} (2008),  169--211.

\bibitem{CK2}
P.~Caldero and B.~Keller, \emph{From triangulated categories to cluster algebras II}, Ann. Scient. \`Ec. Norm. Sup. \textbf{39} (2006), 983--1009.


\bibitem{CR}
P.~Caldero and M.~Reineke, \emph{On the quiver {G}rassmannian in the acyclic
case}, J. Pure Appl. Algebra \textbf{212} (2008), no.~11, 2369--2380.





\bibitem{CEFR}
G~Cerulli~Irelli, F.~Esposito, H.~Franzen, M.~Reineke, \emph{Cellular decomposition and algebraicity of cohomology for quiver Grassmannians}, arXiv: 1804.07736. 

\bibitem{CEsp}
G.~Cerulli~Irelli and F.~Esposito, \emph{Geometry of quiver Grassmannians of Kronecker type and applications to cluster algebras}, ANT \textbf{5} (2011), no.~6,  777--801.

\bibitem{CeDEsp}
G.~Cerulli~Irelli, G.~Dupont and F.~Esposito, \emph{A homological interpretation of the transverse quiver Grassmannians}, ART \textbf{16} (2013), 437--444.



\bibitem{CFFFR}
G.~Cerulli Irelli, X.~Fang, E.~ Feigin, G.~Fourier, M.~Reineke, \emph{Linear degeneration of flag varieties}. Math.~Z. (2017).

\bibitem{CFR} G.~Cerulli Irelli, E.~ Feigin, M.~Reineke, \emph{Quiver Grassmannians and degenerate flag varieties}, 
Algebra \& Number Theory \textbf{6} (2012), no. 1, 165--194. arXiv: 1106.2399.

\bibitem{CFR2} G.~Cerulli Irelli, E.~Feigin, M.~Reineke, \emph{Degenerate flag varieties: moment graphs 
and Schr\"oder numbers}, J. Algebraic Combin. \textbf{38} (2013), no. 1. arXiv:1206.4178.

\bibitem{CFR3} G.~Cerulli Irelli, E.~Feigin, M.~Reineke, 
\emph{Desingularization of quiver Grassmannians for Dynkin quivers}. Adv. Math. \textbf{245} (2013), 182--207. arXiv:1209.3960.

\bibitem{CFR4} G.~Cerulli Irelli, E.~ Feigin, M.~Reineke, \emph{Schubert Quiver Grassmannians}, 
Algebras and Representation Theory.  (2016).

\bibitem{CL} 
G. Cerulli Irelli, M. Lanini. \emph{Degenerate flag varieties of type A and C are Schubert varieties}, 
Int. Math. Res. Not. (2015), no~15, 6353--6374. 

\bibitem{CLL} 
G.~Cerulli Irelli, M.~Lanini, and  P.~Littelmann. \emph{Degenerate flag varieties and Schubert varieties: a characteristic free approach}. 
Pacific J. Math. \textbf{284} (2016), no.~2, 283--308.

\bibitem{CG}
N.~Chriss  and V.~Ginzburg, \emph{Representation theory and complex geometry}, Birkh\"auser Boston Inc.(1997).


\bibitem{DLP}
C.~De~Concini, G.~Lusztig, C.~Procesi, \emph{Homology of the zero-set of a nilpotent vector field on a flag manifold}, JAMS \textbf{1} (1988), no.1. 
15--34. 

\bibitem{DWZ2}
H.~Derksen, J.~Weyman, A.~Zelevinsky, \emph{Quivers with potentials and their representations II: Applications to cluster algebras}. J.~AMS \textbf{23} (2010), 749--790.


\bibitem{ES} 
G.~Ellingsrud, S.~A.~Str\o mme, \emph{Towards the Chow ring of the Hilbert scheme of $\PP^2$ }. J.~Reine~Angew.~Math., \textbf{441} (1993). 

\bibitem{F1}
E.~Feigin, \emph{${\mathbb G}_a^M$ degeneration of flag varieties}, arXiv:1007.0646.

\bibitem{F2}
E.~Feigin, \emph{Degenerate flag varieties and the median Genocchi numbers}, arXiv:1101.1898.

\bibitem{FF}
E.Feigin and M.Finkelberg, \emph{Degenerate flag varieties of type A: Frobenius splitting and BWB theorem},
arXiv:1103.1491.

\bibitem{FZI}
S.~Fomin, A.~Zelevinsky, \emph{Cluster algebras I: Foundations}. JAMS \textbf{15} (2002), no. 2, 497-529.
\bibitem{FZII}
S.~Fomin, A.~Zelevinsky, \emph{Cluster algebras II: Finite type classification}. Inv. Math. \textbf{154} (2003), no. 1, 63-121.

\bibitem{FZIV}
S.~Fomin, A.~Zelevinsky, \emph{Cluster algebras IV: Coefficients}. Compos.~Math. \textbf{143} (2007), no.  1, 112-164.


\bibitem{Fu}
W.~Fulton, \emph{Intersection theory}.  Springer--Verlag, Berlin and New York (1984). 



\bibitem{Gabriel}
P.~Gabriel, \emph{Unzerlegbare Darstellungen I}, Manuscripta Math., \textbf{6} (1972), 71--103.

\bibitem{Harris}
Harris, Joe, \emph{Algebraic geometry: a first course}. Graduate text in mathematics \textbf{133}. Springer-Verlag (1992). 

\bibitem{Hubery}
A.~Hubery, \emph{Irreducible components of quiver Grassmannians}, Trans. AMS, \textbf{369}, no. 2, 1395-1458.

\bibitem{Kac1}
V.~Kac, \emph{Infinite root systems, representations of graphs and invariant theory}, Inv.~Math., \textbf{56}, 57-92 (1980). 

\bibitem{Kac2}
V.~Kac, \emph{Infinite root systems, representations of graphs and invariant theory, II}, J.~Algebra, \textbf{78}, 141-162 (1982). 


\bibitem{Kerner}
O.~Kerner, \emph{Exceptional components of wild hereditary algebras}, Journal of Algebra, \textbf{152}, 184--206 (1992). 

\bibitem{I}
B.~Iversen, \emph{A fixed point formula for action of tori on algebraic varieties}. Inv. Math. \textbf{16} (1972). 229--236.

\bibitem{LeBruynBlog}
L.~Le~Bruyn, \emph{Neverending books}. \url{http://web.archive.org/web/20120918072747/http://www.neverendingbooks.org/index.php/quiver-grassmannians-can-be-anything.html}



\bibitem{LW1}
O.~Lorscheid, T.~Weist, \emph{Quiver Grassmannians of extended Dynkin type D- Part 1: Schubert systems and decompositions into affine spaces}. Preprint (2015). arXiv: 1507.00392.

\bibitem{LW2}
O.~Lorscheid, T.~Weist, \emph{Quiver Grassmannians of extended Dynkin type D- Part 2: Schubert decompositions and F-polynomials}. Preprint (2015). arXiv: 1507.00395.

\bibitem{MR}
K.~M\"ollenhoff, M.~Reineke, \emph{Embeddings of representations}, Algebr Represent Theory \textbf{18} (2015), 977--987.

\bibitem{Naka}
H.~Nakajima, \emph{Quiver varieties and cluster algebras}. Kyoto J. Math. \textbf{1} (2011). 71--126.

\bibitem{Qin}
F.~Qin, \emph{Quantum cluster variables via Serre polynomials. With an appendix by Bernhard Keller}. J. Reine Angew. Math. \textbf{668} (2012). 149--190. 

\bibitem{REveryProj}
M.~Reineke, \emph{Every projective variety is a quiver Grassmannian}. ART \textbf{16} (2013), 1313--1314. 

\bibitem{Ried}
C.~Riedtmann, \emph{Degenerations for representations of quivers with relations}, Ann. Sc. ENS \textbf{2} (1986), 275--301. 

\bibitem{RingEveryProj}
C.~M.~Ringel, \emph{Quiver Grassmannians for wild acyclic quivers}. Proc.~AMS \textbf{146} (2018), no.~5, 1873--1877. 

\bibitem{R}
C.~M.~Ringel \emph{Tame algebras and integral quadratic forms}, Lecture Notes in Mathematics \textbf{1099} (1984).

\bibitem{RRadiation}
C.~M.~Ringel \emph{Distinguished bases of exceptional modules}. Algebras, quivers and representations, Abel Symp. \textbf{8} (2013). 253--274. 

\bibitem{Ringel:Wild} 
C.~M.~Ringel \emph{Finite dimensional hereditary algebras of wild representation type}. Math. Z. \textbf{161} (1978). 235--255. 

\bibitem{RW}
D.~Rupel, T.~Weist, \emph{Cell decomposition for rank two quiver Grassmannians}. Preprint (2018). arXiv: 1803.06590

\bibitem{S}
A.~Schofield, \emph{General representations of quivers}, Proc. London Math.
Soc. (3) \textbf{65} (1992), no.~1, 46--64. 

\bibitem{Ralf}
R.~Schiffler, \emph{Quiver Representations}, CMS Books in Mathematics (2014). Springer.

\bibitem{Z}
G.~Zwara, \emph{Degenerations for modules over representation--finite algebras}. Proc. AMS \textbf{127} (1999). 1313--1322.
\end{thebibliography}

\end{document}